\newcommand{\newnew}[1]{#1}
\pgfplotsset{compat=1.18}
\DeclareMathOperator*{\argmin}{arg\,min}
\def\BibTeX{{\rm B\kern-.05em{\sc i\kern-.025em b}\kern-.08em
    T\kern-.1667em\lower.7ex\hbox{E}\kern-.125emX}}
\begin{document}
\newcommand{\gb}[1]{#1}
\newcommand{\gbb}[1]{\textcolor{purple}{#1}}
\newcommand{\mh}[1]{\textcolor{red}{#1}}
\newcommand{\mhmargin}[1]{\marginpar{\textcolor{red}{\tiny #1}}}

\newcommand{\T}{\mathcal{T}}
\newcommand{\Ts}{\mathcal{T}_s}
\newcommand{\X}{\mathcal{X}}
\newcommand{\R}{\mathbb{R}}
\newcommand{\N}{\mathbb{N}}
\newcommand{\C}{\mathcal{C}}
\newcommand{\D}{\mathcal{D}}
\newcommand{\K}{\mathcal{K}}
\newcommand{\B}{\mathcal{B}}
\newcommand{\HH}{\mathcal{H}}
\newcommand{\V}{\mathcal{V}}
\newcommand{\Z}{\mathcal{Z}}

\newcommand{\at}[2]{\alpha ( {#1} ; t_{#2} )}
\newcommand{\bt}[1]{\beta ( {#1} )}
\newcommand{\pt}[2]{\rho^{#1}_{#2}}
\newcommand{\rhoz}{\rho_{z}}
\newcommand{\rhozz}{\rho_{z+1}}
\newcommand{\rz}{r_{z}}
\newcommand{\Dz}{D_{z}}
\newcommand{\Ez}{E_{z}}
\newcommand{\Fz}{F_{z}}
\newcommand{\Gz}{G_{z}}

\newcommand{\rzz}{r_{z+1}}
\newcommand{\Dzz}{D_{z+1}}
\newcommand{\Ezz}{E_{z+1}}
\newcommand{\Fzz}{F_{z+1}}
\newcommand{\Gzz}{G_{z+1}}

\newcommand{\diam}{\textnormal{diam}}
\newcommand{\dx}{d_{\X}}

\newcommand{\gammaz}{\gamma_z}
\newcommand{\gammazz}{\gamma_{z+1}}

\newcommand{\az}{a_{z}}
\newcommand{\bz}{b_{z}}
\newcommand{\cz}{c_{z}}
\newcommand{\dz}{d_{z}}
\newcommand{\azz}{a_{z+1}}
\newcommand{\bzz}{b_{z+1}}
\newcommand{\czz}{c_{z+1}}
\newcommand{\vz}{\bar{v}_{z}}
\newcommand{\kz}{\hat{k}_{z}}
\newcommand{\kzz}{\hat{k}_{z+1}}
\newcommand{\kzero}{\hat{k}_{0}}
\newcommand{\kone}{\hat{k}_{1}}
\newcommand{\ktwo}{\hat{k}_{2}}


\newcommand{\s}[3]{
\ifthenelse{\equal{#3}{}}{s(#2;\ind{#3})}{s_{#1}(#2)}
}
\newcommand{\Lx}{L_{z}}
\newcommand{\Lxx}{L_{z+1}}
\newcommand{\Mx}{M_{z}}
\newcommand{\Mxx}{M_{z+1}}
\newcommand{\Lg}[1]{L_{g,#1}}

\newcommand{\gmax}{\bar{g}(\tz)}

\newcommand{\Lq}[1]{L^{[#1]}_Q}
\newcommand{\Lr}[1]{L^{[#1]}_r}


\newcommand{\J}[2]{J \left(#1,#2\right) }
\newcommand{\f}[2]{f \left(#1;#2\right) }
\newcommand{\fl}[2]{f_l \left(#1;#2\right) }
\newcommand{\g}[2]{g (#1;#2) }
\newcommand{\gu}[2]{g_u \left(#1;#2\right) }
\newcommand{\gc}[2]{ \bar{g} \left(#1;#2\right) }
\newcommand{\h}[2]{h \left(#1;#2\right) }
\newcommand{\hu}[2]{h_u \left(#1;#2\right) }

\newcommand{\ind}[2]{t^{#1}_{#2}}
\newcommand{\tl}{t_{\ell}}
\newcommand{\tll}{t_{\ell+1}}
\newcommand{\tz}{t_{z}}
\newcommand{\tzz}{t_{z+1}}

\newcommand{\Q}[2]{
\ifthenelse{\equal{#1}{}}{Q\left(#2\right)}{Q^{[#1]}\left(#2\right)}
}
\newcommand{\rr}[2]{
\ifthenelse{\equal{#1}{}}{r\left(#2\right)}{r^{[#1]}\left(#2\right)}
}

\newcommand{\Qmap}[1]{
\ifthenelse{\equal{#1}{}}{\Upsilon \left( \Q{}{\cdot} \right)}{\Upsilon \left( \Q{}{#1} \right)}
}

\newcommand{\rmap}[1]{
\ifthenelse{\equal{#1}{}}{\Xi \left( \rr{}{\cdot} \right)}{\Xi \left( \rr{}{#1} \right)}
}

\newcommand{\solSeries}{\left\{\x{*}{}{\ind{}{\ell}} \right\}_{\ind{}{\ell} \in \T}}

\newcommand{\x}[3]{
\ifthenelse{\equal{#3}{}}{x^{#1}_{#2}}{x^{#1}_{#2}(#3)}
}



\newtheorem{dfn}{Definition}
\newtheorem{theorem}{Theorem}
\newtheorem{assumption}{Assumption}
\newtheorem{lemma}{Lemma}
\newtheorem{appendice}{Appendix}
\newtheorem{problem}{Problem}
\newtheorem{remark}{Remark}
\newtheorem{corollary}{Corollary}

\renewcommand\theappendice{\Alph{appendice}}

\title{Distributed Asynchronous Time-Varying Quadratic Programming with Asynchronous Objective Sampling}
\author{Gabriel Behrendt, Zachary I. Bell, Matthew Hale
\thanks{Gabriel Behrendt and Matthew Hale were supported by 
AFRL under grant FA8651-23-F-A006,
AFOSR under grants FA9550-19-1-0169 and FA9550-23-1-0120,
and ONR under grant N00014-24-1-2331. 
 }
\thanks{Gabriel Behrendt and Matthew Hale are
with the School of Electrical and Computer Engineering
at the Georgia Institute of Technology, Atlanta, GA, USA. 
Emails: \texttt{\{gbehrendt3,mhale30\}@gatech.edu} 
Zachary I. Bell is with AFRL/RW at Eglin AFB. Email: \texttt{zachary.bell.10@us.af.mil} }}

\maketitle

\begin{abstract} \label{sec:abstract}
\newnew{
Existing works on multi-agent time-varying optimization 
allow agents to asynchronously communicate
and/or compute, but do not allow 
asynchronous sampling of objectives. 
Sampling can be difficult to synchronize, and we 
therefore present a multi-agent 
optimization framework that allows asynchrony in
sampling, communications, and computations
for time-varying quadratic programs. 
We show that agents have bounded error when
tracking the solution to the asynchronously sampled problem, 
which solves an open problem for quadratic programs. 
Simulations validate these results. 
}
\end{abstract}
\section{Introduction} \label{sec:intro}
\newnew{
Time-varying optimization problems 
have been formulated in 
robotics~\cite{arslan2019sensor}, signal processing~\cite{jakubiec2012d}, machine learning~\cite{yang2016online}, 
and many other applications~\cite{simonetto2020time}. 
Such problems can, for example, model time-varying demands in power distribution systems~\cite{tang2017real} or robot navigation in cluttered dynamic environments~\cite{arslan2016exact}, among others~\cite{dall2016optimal,cortes2017coordinated}. 
These applications often involve multi-agent systems, and it is common
for agents to sample a 
continuously varying objective and minimize the objective function
that they sample~\cite{simonetto2016class}. 
Existing work has assumed that all agents take samples at the same times, but
it can be difficult to synchronize
agents' sampling,
just as it can be difficult to synchronize their communications
and computations~\cite{bertsekas1989parallel}. 
}

\newnew{
Accordingly, 
in this work we present and analyze a distributed gradient optimization algorithm 
in which agents can 
asynchronously compute, communicate, and sample their objective. 
Asynchronous sampling can cause agents to disagree about the objective function that they minimize, which presents
a new technical challenge that is not typically faced in multi-agent optimization. Indeed, 
to the best of our knowledge, this work is the first
to allow sampling, computing, and communicating to all be executed
asynchronously. 
}

\newnew{
We consider a distributed asynchronous
block coordinate descent algorithm.
Algorithms of this kind have been shown
to be robust to asynchrony in computations and communications
in many time-invariant optimization problems~\cite{bertsekas1989parallel,yazdani2021asynchronous,cannelli2020asynchronous,tseng1991rate,luo1992error,ubl21,hendrickson23,hendrickson23b}. 
Recently, such algorithms have been applied to
time-varying problems as well~\cite{behrendt2023totally,behrendt2023distributed}, though
always with synchronous objective sampling.
We show for the first time that this algorithm is robust to asynchronous objective sampling as well. 
We apply it to time-varying quadratic programs (QPs), which
have been used in applications such as motion planning~\cite{miao2015solving, mohammed2015dynamic}, controller design~\cite{hamed2020quadrupedal, johansen2004constrained}, signal processing~\cite{mattingley2010real}, and machine learning~\cite{zhang2021barrier}, among others.
}

\newnew{
Asynchronous objective sampling causes agents to have different objectives
onboard, which has required several new technical developments to analyze. 
This paper makes the following contributions, all of which
are new to the literature: 
\begin{itemize}
    \item \newnew{We show that asynchronous sampling 
    can cause agents to
    solve a time-varying nonconvex QP, even when the original problem
    is a strongly convex QP (Theorem~\ref{thm:agg})}.
    \item \newnew{We prove that a distributed asynchronous block coordinate descent algorithm
    tracks the solutions of time-varying nonconvex QPs
    with bounded error (Theorem~\ref{thm:1}); 
    this result may be of independent interest}.
    \item \newnew{We bound the distance between (i) the solutions to the original
    QPs and (ii) the solutions to the nonconvex QPs that result
    from asynchronous sampling. We use this bound to 
    show that agents track the solutions of the 
    original QPs to within an error ball of known 
    size~{(Theorem~\ref{thm:2})}}.
    \item We show that by synchronizing agents' computations, communications, and objective sampling we recover 
    (up to constant factors) existing linear convergence results for time-varying optimization (Corollaries~\ref{cor:synch1} and~\ref{cor:synch2}).
    \newnew{This paper therefore generalizes several existing works.}
    \item We demonstrate bounded tracking error for two asynchronously sampled time-varying QPs~{(Section~\ref{sec:simulation})}.
\end{itemize}
}

\newnew{
Distributed time-varying optimization has been studied in both continuous time~\cite{rahili2016distributed,wu2022distributed} and discrete time~\cite{bernstein2018asynchronous,simonetto2017decentralized,ling2013decentralized,yi2020distributed}.
Specifically,~\cite{bernstein2018asynchronous} proposed an algorithm 
that uses asynchronous communications for discrete-time time-varying convex optimization problems. 
Developments in~\cite{simonetto2017decentralized} provide a decentralized prediction-correction method for 
the same class of problems. Work in
\cite{ling2013decentralized} developed a decentralized alternating direction method of multipliers algorithm to track the solution of such problems, while 
\cite{yi2020distributed} considered distributed online convex optimization with time-varying constraints.
}

\newnew{
Those and other existing works require agents to sample their objective simultaneously, 
though it can be difficult to synchronize sampling, especially when communications
and computations are asynchronous. 
We differ from existing works
by allowing agents to asynchronously sample their objective, 
which we show can require agents to solve a time-varying nonconvex optimization problem. 
New technical challenges come both from 
(i) quantifying how agents track the solutions of such problems and 
(ii) relating the solutions of those nonconvex problems to the solutions of the original QPs. 
}

\newnew{
Distributed asynchronous sampling was named
in~\cite{simonetto2020time} as an open problem in time-varying optimization, and
we solve it for a class of quadratic programs.
To the best of our knowledge we are the first to address asynchronous sampling of this kind. 
This paper extends our previous work in~\cite{behrendt2023totally,behrendt2023distributed}, which
required synchronous sampling of objectives. 
 }

\newnew{
The rest of the article is organized as follows. Section~\ref{sec:problem} gives 
problem statements, Section~\ref{sec:algorithm} presents the main algorithm, Section~\ref{sec:convergence} analyzes convergence, 
Section~\ref{sec:simulation} presents simulations, 
and Section~\ref{sec:conclusion} concludes.
}

\noindent\textbf{Notation} 
We use~$\R$ and~$\R_{+}$ to denote the reals and non-negative reals, respectively.
We use~$\N$ to denote the naturals and~${\N_0 = \N \cup \{ 0 \}}$.
The symbol~$\| \cdot \|$ denotes the Euclidean norm.
The diameter of a compact set $\X \subset \R^n$ is denoted $\dx \coloneqq \sup_{x,y \in \X} \| x-y \|$. 
For~$a, b \in \N$, the set~$\{a, \ldots, b\}$ is empty if~$b < a$. 
We use~$\Pi_\mathcal{Z}$ for the Euclidean projection onto a non-empty, closed, convex set~$\mathcal{Z}$, i.e.,~$\Pi_\mathcal{Z}[v] = \argmin_{z \in \mathcal{Z}} \| v - z \|$.
We  use~$[d]=\{1,\dots,d\}$ for~$d \in \N$ and~$I_n$ for the~$n \times n$ identity matrix. 
For~$\X \subseteq \R^n$,~$C^2( \X )$ is the set of twice continuously differentiable functions from~$\X$ to $\R$. 
For~$\bar{u} > 0 $ and~${0 < \varphi < \psi}$, we define the sets of functions 
$\D = \{f \in C^2( \X ) : \| \nabla f(x) \| 
\leq \bar{u}, \nabla^2 f(x) \preceq \psi I_n \,\, \forall x \in \X \}$ and
$\mathcal{S} = \{f \in \D : \varphi I_n \preceq \nabla^2f(x) \,\, \forall x \in \X\}$.

\section{Problem Formulation} \label{sec:problem}

This section states the problem that is the focus of this paper. 

\addtocounter{problem}{-1}

\subsection{Problem Statement}
\begin{problem}[Preliminary; continuous-time] \label{prob:continuous}
    Given~$J : \R^{n} \times \R_+ \rightarrow \R$ over the time horizon~$t \in [0,t_f]$, using~$N \in \N$ agents that asynchronously 
    compute and communicate, solve
    \begin{equation}
        \underset{x\in \X}{\textnormal{minimize}} \  \J{x}{t} := x^T Q(t) x + r(t)^T x,
    \end{equation}
    where~$Q(t) \in \R^{n \times n}$,~$r(t) \in \R^n$, 
    $\X \subseteq \R^n$, and~$t_f \in \R_+$.
    \hfill $\Diamond$
\end{problem}
We make the following assumptions about Problem~\ref{prob:continuous}.
\begin{assumption} \label{ass:contTime}
    $Q(t)  =  Q(t)^T  \succeq  \xi I_n$ for all~$t$, where~$\xi  >  0$.  \hfill $\lozenge$
\end{assumption}
\begin{assumption} \label{ass:setConstraint}
    There exist sets~$\X_1, \ldots, \X_N$ such that~$\X$ can be decomposed via ${\X = \X_{1} \times \dots \times \X_{N}}$, 
    where~$N$ is the number of agents and~$\X_{i} \subseteq \R^{n_i}$ is non-empty, compact, and polyhedral for all~$i \in [N]$, with~$\sum_{i \in [N]} n_i = n$. 
    \hfill $\lozenge$ 
\end{assumption}

Assumption~\ref{ass:contTime} ensures that~$\J{\cdot}{t}$ is~$\xi$-strongly convex for all~$t\in [0,t_f]$. 
\gb{It is easily satisfied in applications, e.g., when minimizing the squared distance to a moving target.
If~$Q(t) \not \succeq \xi I_n$, then 
the forthcoming algorithm may still track minimizers, though the analysis would be
more complex due to the possible non-uniqueness of the solution to Problem~\ref{prob:discrete}.
}
For each~$i \in [N]$, agent~$i$ will compute new values of~$x_i$, and 
Assumption~\ref{ass:setConstraint} allows each agent to project its block of decision variables onto a constraint set independently, 
which ensures that the overall set constraint is satisfied. 
\gb{ 
Without it, there is in general no way to ensure the satisfaction
of set constraints under asynchrony.}
Assumptions~\ref{ass:contTime} and~\ref{ass:setConstraint} guarantee that the solution trajectory of Problem~\ref{prob:continuous} exists and is unique for all~$t \in [0, t_f]$. 

Given a vector~$v \in \R^n$, where~$n= \sum^N_{i=1} n_i$, 
$v^{[1]}$ denotes the first~$n_1$ entries of~$v$,~$v^{[2]}$ denotes the next~$n_2$ entries, etc. 
For an~$n \times n$ matrix~$A$,~$A^{[1]}$ denotes the first~$n_1$ rows of~$A$,~$A^{[2]}$ denotes the next~$n_2$ rows, etc. 
We make the following assumption on the evolution of~$\Q{i}{\cdot}$ and~$\rr{i}{\cdot}$. 

 \begin{assumption} \label{ass:lipTime}
     For all~$t_1,t_2 \in [0,t_f]$ and~$i \in [N]$ there exist~$\Lq{i},\Lr{i}>0$ such that~$\| \Q{i}{t_1} - \Q{i}{t_2} \| \leq \Lq{i} |t_1 - t_2 |$ and~$\| \rr{i}{t_1} - \rr{i}{t_2} \| \leq \Lr{i} |t_1 - t_2 |$. \hfill $\lozenge$ 
 \end{assumption}
 
 \gb{
 Assumption~\ref{ass:lipTime} says that~$\Q{i}{\cdot}$ and~$\rr{i}{\cdot}$ 
 are Lipschitz functions of time
 for all~$i \in [N]$, and it is satisfied, for example, by any~$\Q{i}{\cdot}$
 and~$\rr{i}{\cdot}$ that are continuously differentiable. 
 Without it, movement of minimizers could be arbitrarily fast over time, making them impossible to track.
 }

Agents asynchronously sample the objective in Problem~\ref{prob:continuous}. 
They are capable of sampling it every~$t_s > 0$ seconds, 
but they are not guaranteed to take a sample at every possible sample time.
Agents only take samples at times
of the form~$t = \ell t_s$ for~$\ell \in \N_0$.
Define~$\mathcal{T}_{all} = \{\ell t_s\}_{\ell \in \N_0}$. 
The following problem includes all objectives that an agent might sample.  

\begin{problem}[Discrete-Time] \label{prob:discrete}
     Given~$\mathcal{S} \ni f : \R^{n} \times \R_+ \rightarrow \R $ and~$\ell \in \N_0$, using~$N \in \N$ agents that asynchronously compute, communicate, and sample the objective, drive~$x$ to track 
    \begin{equation}
        \argmin_{x \in \X} \f{x}{\ell t_s} \coloneqq x^T \Q{}{\ell t_s} x + \rr{}{\ell t_s}^T x. \tag*{\text{$\Diamond$}}
    \end{equation}    
\end{problem}

We denote the minimum cost of Problem~\ref{prob:discrete} at time~$\ell t_s$ as 
    $f^*(\ell t_s) \coloneqq \underset{x \in \X}{\min} \ \f{x}{\ell t_s} \text{ for } \ell \in \N_0$.
\gb{Time-varying quadratic programs have been used in control~\cite{hours2015parametric}, robotics~\cite{arslan2016exact}, and machine learning~\cite{balavoine2015discrete}.
Multi-agent systems arise in these applications, 
e.g., by using parallel computing. 
Multi-agent algorithms are thus needed, and that is
what we develop. 
}
 
Agents are modeled as sampling Problem~\ref{prob:discrete} asynchronously. 
Let~$\T^i$ be the set of discrete time indices at which agent~$i$ samples Problem~\ref{prob:discrete}. 
Define~$\Ts \coloneqq \cup_{i=1}^N\T^i = \{t_0, t_1, \dots, t_T \}$ with~$T \in \N$, which contains all times at which at least one agent samples Problem~\ref{prob:discrete}. 
Then for~$\tz \in \Ts$, 
let~$\theta_i \left( \tz \right) \coloneqq \max \{ t \in \T^i : t \leq \tz\}$, which is the time index of agent~$i$'s latest sample that is not
taken after time~$\tz \in \Ts$. 
 
\begin{assumption} \label{ass:sample}
    For all~$i \in [N]$ and~$\tz\in \Ts$ there exists~$\Delta > 0$ such that~$| \theta_i (\tzz) - \theta_i(\tz) | \leq \Delta$. \hfill $\lozenge$ 
\end{assumption}

\gb{
Assumption~\ref{ass:sample} holds as long as no agent ever permanently stops sampling the objective.  
It is easily satisfied in practice because~$\Delta$ can be arbitrary.
}
When agent~$i$ computes a new value for~$x_i$, it 
uses a gradient descent law with the 
gradient~$\nabla_{x_i} \f{\x{}{}{}}{\cdot} = Q^{[i]}(\cdot) \x{}{}{} + r^{[i]}(\cdot)$. 
\gb{Due to asynchronously sampling Problem~\ref{prob:discrete}, agents will usually 
compute gradients with blocks 
of~$Q(\cdot)$ and~$r(\cdot)$ that were sampled at different times. 
These computations are equivalent to 
minimizing an aggregate objective function that no agent has onboard. 
}

\begin{problem} [Aggregate Problem] \label{prob:aggregate}
    Given~$g : \R^{n} \times \R_+ \rightarrow \R$, using~$N \in \N$ agents that asynchronously 
    compute and communicate, for all~$\tz \in \Ts$, drive~$x$ to track the solution of
    \begin{equation}
        \underset{x\in \X}{\textnormal{minimize}}  \  \g{x}{\tz} \coloneqq x^T \Qmap{\tz} x + \rmap{\tz}^T x,
    \end{equation}
    where
    \begin{equation}
        \Qmap{\tz} = \big[ \Q{1}{\theta_1 \left( \tz \right)}^T, \dots, \Q{N}{\theta_N \left( \tz \right)}^T \big]^T
    \end{equation}
    and
    \begin{equation}
        \rmap{\tz} = \big[\rr{1}{\theta_1 \left( \tz \right)}^T, \dots, \rr{N}{\theta_N \left( \tz \right)}^T \big]^T. 
    \end{equation}    
    $\lozenge$
\end{problem}

\gb{Problem~\ref{prob:aggregate}'s objective simultaneously produces
each agent's local gradient, which indicates that agents are indeed minimizing it.}
While Problem~\ref{prob:discrete} has a strongly convex objective at all times,
Problem~\ref{prob:aggregate} can be nonconvex. 

\begin{theorem} \label{thm:agg}
There exist choices of Problem~\ref{prob:discrete} such that, under Assumptions~\ref{ass:contTime}-\ref{ass:sample}, 
asynchronous objective function sampling leads to a nonconvex aggregate problem in Problem~\ref{prob:aggregate}.
\end{theorem}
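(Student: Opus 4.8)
The plan is to prove the theorem by exhibiting an explicit instance of Problem~\ref{prob:discrete}, together with a per-agent sampling pattern consistent with Assumption~\ref{ass:sample}, for which the aggregate matrix $\Qmap{\tz}$ of Problem~\ref{prob:aggregate} is indefinite. Since an indefinite Hessian makes the quadratic objective $\g{\cdot}{\tz}$ nonconvex, a single such instance establishes the ``there exist choices'' claim. The strong convexity guaranteed by Assumption~\ref{ass:contTime} holds for each individually sampled $Q(t)$, so the whole point of the construction is to show that combining strongly convex samples across agents can nonetheless produce a nonconvex aggregate.

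First I would isolate the mechanism that makes this possible. By the definition in Problem~\ref{prob:aggregate}, the $i$-th block of rows of $\Qmap{\tz}$ is $\Q{i}{\theta_i(\tz)}$, so each row block is drawn from $Q$ sampled at a possibly different time. Consequently the diagonal block $(i,i)$ is anchored to the single sample $\theta_i(\tz)$, while the off-diagonal entries $(i,j)$ and $(j,i)$ come from the two distinct samples $\theta_i(\tz)$ and $\theta_j(\tz)$. Because only the symmetric part of a matrix contributes to a quadratic form, convexity of $\g{\cdot}{\tz}$ is governed by $\tfrac{1}{2}\big(\Qmap{\tz}+\Qmap{\tz}^T\big)$, and the freedom to let the small diagonal entries be inherited from different samples while the off-diagonals stay large is exactly what permits that symmetric part to become indefinite even though every $Q(t)\succeq\xi I_n$.

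The core of the construction is then a $2\times 2$ instance with $N=2$ and $n_1=n_2=1$. I would choose two distinct times $s_1\neq s_2$, set $\theta_1(\tz)=s_1$ and $\theta_2(\tz)=s_2$, and take
\[
Q(s_1)=\begin{pmatrix} 1 & 3 \\ 3 & 10 \end{pmatrix}, \qquad
Q(s_2)=\begin{pmatrix} 10 & 3 \\ 3 & 1 \end{pmatrix},
\]
each of which is symmetric positive definite (determinant $1$, positive trace), so Assumption~\ref{ass:contTime} holds at $s_1,s_2$ for a suitable $\xi>0$. The aggregate matrix takes its first row from $Q(s_1)$ and its second row from $Q(s_2)$, giving
\[
\Qmap{\tz}=\begin{pmatrix} 1 & 3 \\ 3 & 1 \end{pmatrix},
\]
which has determinant $1-9=-8<0$ and is therefore indefinite, so $\g{\cdot}{\tz}$ is nonconvex. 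The effect is precisely as described above: the two unit diagonal entries are inherited from different samples, while the large off-diagonal entry survives and dominates the geometric mean $\sqrt{\Q{1}{s_1}_{11}\,\Q{2}{s_2}_{22}}$ of the mismatched diagonals.

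It remains to embed this into a full time-varying instance meeting Assumptions~\ref{ass:setConstraint}--\ref{ass:sample}. I would define $Q(t)$ by linear interpolation between $Q(s_1)$ and $Q(s_2)$, held constant outside $[s_1,s_2]$; convexity of the positive-definite cone together with the pointwise estimate $v^T\big(\lambda Q(s_1)+(1-\lambda)Q(s_2)\big)v\geq \xi\|v\|^2$ keeps $Q(t)\succeq\xi I_n$ for all $t$, and piecewise-linear dependence on $t$ makes $Q(\cdot)$ Lipschitz, so Assumption~\ref{ass:lipTime} holds (take $r(\cdot)$ constant). Any compact polyhedral $\X_1,\X_2$ satisfies Assumption~\ref{ass:setConstraint} and yields $f\in\mathcal{S}$ with $\varphi=2\xi$ and $\psi$ twice the largest eigenvalue of $Q$ over $[0,t_f]$, and any periodic schedule with $\theta_1(\tz)=s_1\neq s_2=\theta_2(\tz)$ at some $\tz$ satisfies Assumption~\ref{ass:sample} for a suitable $\Delta$. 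I expect the only conceptual obstacle to be the recognition that strong convexity of every sampled $Q(t)$ does not survive the row-block aggregation; once the mechanism is identified, the verification is elementary and the explicit numbers above make every inequality immediate.
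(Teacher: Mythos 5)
Your proposal is correct and takes essentially the same route as the paper's own proof: an explicit two-agent counterexample in which row-block aggregation of two strongly convex samples produces an indefinite aggregate matrix, whose negative eigenvalue certifies nonconvexity, followed by verification of Assumptions~\ref{ass:contTime}--\ref{ass:sample}. If anything your instance is cleaner, since your aggregate $\left(\begin{smallmatrix} 1 & 3 \\ 3 & 1 \end{smallmatrix}\right)$ is already symmetric with exact eigenvalues $4$ and $-2$, whereas the paper uses a trigonometric $Q(t)$ and must pass to the symmetric part with numerically computed eigenvalues.
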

\begin{proof}
 \newnew{We prove that such choices of Problem~\ref{prob:discrete} exist by explicitly
 constructing one.}
 Consider
    \begin{equation}
        Q(t) = \begin{bmatrix}
            1.2+\cos(t) & \sin(t) \\
            \sin(t) & 1.2
        \end{bmatrix}, \quad r(t) \equiv 0.
    \end{equation} 
    Then~$Q(t) \succeq \frac{1}{5}I_2$ for all~$t$
    and Assumption~\ref{ass:contTime} is satisfied. 
    Suppose there are~$N = 2$ agents that each update a scalar
    and~$\X = [0, 1]^2$.
    Then Assumption~\ref{ass:setConstraint} is satisfied. 
    Suppose agent~$1$ 
samples the objective at times in~$\T^1 \coloneqq \{(2\xi + 1)\pi + \frac{\pi}{4} \}$
and agent~$2$ samples it
at times in~$\T^2 = \{ (2\xi + 1)\pi + \frac{\pi}{2} \}$ for all~$\xi \in \N_0$. 
It is easy to verify that Assumptions~\ref{ass:lipTime} and~\ref{ass:sample} hold. 
Then~$\T_s = \T^1 \cup \T^2$, and for all~$\tz \in \Ts$ we have
    \begin{equation}
        \Qmap{\tz} 
        = \begin{bmatrix}
            1.2 - \frac{\sqrt{2}}{2}  & - \frac{\sqrt{2}}{2} \\
            -1 & 1.2
        \end{bmatrix}. 
    \end{equation}     
    When agents minimize~$x^T\Qmap{\tz}x$, only the symmetric
    part~$Q_s(\tz) = \frac{1}{2}(\Qmap{\tz} + \Qmap{\tz}^T)$ contributes
    to its value. That minimization is equal
    to minimizing~$x^TQ_s(\tz)x$, and~$Q_s(\tz)$ 
    has eigenvalues~$\lambda_{1,2} \approx 1.77,-0.08$. 
    The negative eigenvalue shows that agents' objective
    is nonconvex. 
\end{proof}

\gb{Nonconvexity of Problem~\ref{prob:aggregate} is due to agents' 
disagreement about the objective, which comes from asynchronous sampling.
Making it convex would require all agents to agree on 
the objective, which 
is difficult to enforce under asynchrony.}
Problem~\ref{prob:aggregate} may have several local solutions, and 
we use
    $\X^*(\tz) \coloneqq \{ x \in \R^n \ | \ x = \Pi_\X [ x - \nabla_x \g{x}{\tz}] \}$
to denote its solution set for each~$\tz \in \Ts$.

\begin{assumption} \label{ass:sigma}
    For all~$\tz \in \Ts$, and for any~$\x{*}{}{\tz} \in \X^*(\tz)$ and any~$\x{*}{}{\tzz} \in \X^*(\tzz)$, there exists~$\sigma_{z+1} > 0$ such that~$\| \x{*}{}{\tzz} - \x{*}{}{\tz} \| \leq \sigma_{z+1}$. \hfill $\lozenge$ 
 \end{assumption}

    \gb{
        Assumption~\ref{ass:sigma} 
        ensures that successive minimizers are not arbitrarily far apart.
        It is always satisfied with~$\sigma_{z} = \dx$ for all~$\tz \in \Ts$, but
        smaller values can be used if they are known. 
    }

\begin{assumption} \label{ass:lowerBound}
    For all~$\tz \in \Ts$ and~$x \in \X$,~$\g{x}{\tz} \geq 0.$ \hfill $\lozenge$ 
\end{assumption}

    \gb{
    All objectives are bounded below due to our earlier assumptions, and 
    one can thus add a constant to each objective function to make it non-negative
    to satisfy Assumption~\ref{ass:lowerBound}. 
    }

\section{Asynchronous Update Law} \label{sec:algorithm}
\newnew{
Now we present the setup for asynchronous multi-agent optimization
of time-varying objectives with asynchronous sampling.
While one
could envision developing a  new algorithm for this setting, we show
that is not necessary. Instead, 
in Algorithm~\ref{alg:myAlg} below
we revisit and adapt the classic 
asynchronous block coordinate descent algorithm, which was shown 
in~\cite[Chapter 7]{bertsekas1989parallel} to be robust
to asynchronous communications and computations.
We show for the first time that it is robust to asynchronous
objective sampling as well. 
}

\subsection{Timescale Separation}
Problem~\ref{prob:aggregate} is indexed by~$\ind{}{z}$.
It starts with~$z = 0$ and increments~$z$ by~$1$ at
the times at which at least one agent samples Problem~\ref{prob:discrete}.
We index agents' computations and communications over the time index~$k$ 
because these operations need not occur at the same times as their samples. 
Similar to~\cite{hauswirth2020timescale, colombino2019online}, 
we use a ``correction-only'' algorithm that does not predict changes in agents' objectives 
since it would be difficult to do so under asynchronous sampling. 
Some timescale separation is therefore required~\cite{simonetto2017decentralized}
between the changes in objectives and the agents' communications and computations.

\begin{assumption} \label{ass:timescale}
     In Problem~\ref{prob:aggregate}, for each~$\tz \in \Ts$, there are~${\kappa_z > 0}$ ticks of~$k$ when minimizing~$\g{\cdot}{\tz}$.      
     \hfill $\lozenge$ 
\end{assumption}

   
    \gb{Eliminating Assumption~\ref{ass:timescale} would not necessarily force Algorithm~\ref{alg:myAlg} 
    to fail     to converge, though it would make our theoretical guarantees no longer hold.}
    We use~$\eta_z \in \N$ to denote the total number of ticks of~$k$ that occur before~$\ind{}{z}$ increments to~$\ind{}{z + 1}$, i.e.,~$\eta_z = \sum^{z}_{i=0} \kappa_i$.
We set~$\kappa_{-1}=0$ and~$\eta_{-1}=0$.

\subsection{Asynchronous Block Coordinate Descent (BCD)}
We consider a block-based gradient projection algorithm with asynchronous computations and communications to track the solutions of Problem~\ref{prob:aggregate}. 
Each agent updates only a subset of the entire decision vector, and 
agents' locally updated values are then communicated to other agents over time.
Asynchrony implies that agents receive different values at different times, and thus they have different values of decision variables onboard. 
At any time~$k$, agent~$i$ has a local copy of the decision vector, denoted~$\x{i}{}{k}$.
Due to asynchrony, we allow~$\x{i}{}{k} \neq \x{j}{}{k}$ for~$i \neq j$. 
Within the vector~$\x{i}{}{k}$, agent~$i$ computes updates only to its own sub-vector of decision variables,~$\x{i}{i}{k} \in \R^{n_i}$. 
At any time~$k$, agent~$i$ has (possibly old) values for agent~$j$'s sub-vector of the decision variables, denoted by~$\x{i}{j}{k} \in \R^{n_j}$. These are updated only by agent~$j$ and then communicated by agent~$j$ to agent~$i$ and others. 

At time~$k$, if agent~$i$ computes an update to~$\x{i}{i}{k}$, then its computations use the decision vector~$\x{i}{}{k}$. 
The entries of~$\x{i}{j}{k}$ for~$j  \neq  i$ are obtained by communications from agent~$j$, which may be subject to delays. Therefore, agent~$i$ may (and often will) compute updates to~$\x{i}{i}{k}$ based on outdated information from other agents.

Let~$\K^i$ be the set of times at which agent~$i$ computes an update to~$\x{i}{i}{}$. 
Let~$\C^i_j$ be the set of times at which agent~$i$ receives a transmission of~$\x{j}{j}{}$ from agent~$j$; due to communication delays, these transmissions can be received long after they are sent, and they can be received at different times by different agents. The sets~$\K^i$ and~$\C^i_j$ are only defined to simplify discussion and analysis, and
agents do not need to know them. 
We define~$\tau^i_j(k)$ to be the time at which agent~$j$ originally computed the value of~$\x{i}{j}{k}$ that agent~$i$ has onboard at time~$k$. 
At time~$k$, agent~$i$ stores the vector
\begin{equation}
    \x{i}{}{k} = \left( \x{1}{1}{\tau^i_1(k)}^T,\dots,\x{i}{i}{k}^T,\dots, \x{N}{N}{\tau^i_N(k)}^T \right)^T. \label{eq:1000} 
\end{equation}
We will also analyze the true state of the network, namely 
\begin{equation}
    \x{}{}{k} = \left( \x{1}{1}{k}^T,\x{2}{2}{k}^T,\dots,\x{N}{N}{k}^T \right)^T . \label{eq:1001}
\end{equation} 
We assume that computation and communication delays are bounded, which has been called \emph{partial asynchrony}~\cite{bertsekas1989parallel}. 

\begin{assumption} \label{ass:partialAsynch}
There exists~$B \in \N$ such that
    \begin{enumerate}
    \item For all $i \in [N]$ and all $k \geq 0$, at least one element
    of the set $\left\{ k,k+1,\dots,k+B-1\right\} $ belongs to $\K^{i}$
    \item The bound
    $\max\left\{ 0,k-B+1\right\} \leq\tau_{j}^{i}\left(k\right)\leq k$
     holds for all $i \in [N]$, $j \in [N]$, and~$k \geq 0$. \hfill $\lozenge$
    \end{enumerate} 
\end{assumption}

\gb{Assumption~\ref{ass:partialAsynch} only needs
to hold for some~$B \in \N$, and that~$B$ does not need to be small. 
It cannot be removed since unbounded delays
can cause divergence~\cite[Section 6.3]{bertsekas1989parallel}. 
}

We propose to use the projected gradient update law 
\begin{align}
\x{i}{i}{k+1}= & \begin{cases}
\Omega_i \big( \x{i}{}{k} \big) & k\in \K^{i}\\
\x{i}{i}{k} & k\notin \K^{i}
\end{cases}\label{eq:update}  \\
\x{i}{j}{k+1}= & \begin{cases}
\x{j}{j}{\tau_{j}^{i}(k)} & k\in \C_{j}^{i}\\
\x{i}{j}{k} & k\notin \C_{j}^{i}
\end{cases},
\end{align}
where
\begin{equation}
    \Omega_i \big(\x{i}{}{k}\big) 
    = \Pi_{\X_{i}} \Big[\x{i}{i}{k} 
     - \gamma_z \big( Q^{[i]} (\theta_i ( \tz ) ) \x{i}{}{k} + r^{[i]}(\theta_i ( \tz ))  \big) \Big]
\end{equation}
and~$\gammaz > 0$ is the step size. 
Agents' communications
are \emph{not} all-to-all. Instead, two agents communicate only if
one’s updates rely upon the other’s sub-vector of decision
variables. 
Algorithm~\ref{alg:myAlg} provides pseudocode for agents' updates.

\begin{algorithm}[t!]
    \caption{Asynchronous Projected BCD}
    \label{alg:myAlg}
	\SetAlgoLined
	\KwIn{$\x{i}{j}{0}$ for all~$i,j \in [N]$}
	\For{$\tz \in \Ts$}
	{
		\For{$k=\eta_{z-1}+1:\eta_{z}$}
		{
			\For{i=1:N}
			{
			    \For{j=1:N}
			    {
    				\If{$k \in \C^{i}_j$}{    			
                        ~\phantom{} $\x{i}{j}{k+1} = \x{j}{j}{\tau^i_j(k)}$ 
    				}
    				\Else{
    				    ~\phantom{} $\x{i}{j}{k+1} = \x{i}{j}{k}$
    				}
    			}
				\If{k $\in \K^{i}$ }{
					~$\x{i}{i}{k+1} = \Omega_i \left( \x{i}{}{k} \right) $
				}
				\Else{
				~$\x{i}{i}{k+1} = \x{i}{i}{k}$ 
				}
			}			
		}	
	}
\end{algorithm}

\section{Convergence of Algorithm~\ref{alg:myAlg}} \label{sec:convergence}
This section analyzes the convergence of Algorithm~\ref{alg:myAlg}. 
 
 \subsection{Analyzing Objectives in Problem~\ref{prob:aggregate}} \label{ss:p2objectives}
First, we establish key properties of the objective in Problem~\ref{prob:aggregate}. 
For all~$\tz \in \Ts$, from Assumptions~\ref{ass:contTime} and~\ref{ass:setConstraint} 
we see that~$\|\nabla^2_x \g{\cdot}{\tz}\|$ is continuous and~$\X$ is compact. Then $\|\nabla^2_x \g{\cdot}{\tz}\|$ is bounded and~$\nabla_x \g{\cdot}{\tz}$ is Lipschitz on~$\X$. 
For each~$\tz \in \Ts$, we use~$\Lx$ to denote the upper bound on~$\| \nabla^2_x \g{\cdot}{\tz} \|$,
and~$\Lx$ is the Lipschitz constant of~$\nabla_x \g{\cdot}{\tz}$ over~$\X$. 
For each~$\tz \in \Ts$ and for all~$x_1,x_2 \in \X$ we have
    $\| \nabla_x \g{x_1}{\tz}  - \nabla_x \g{x_2}{\tz} \| \leq \Lx \| x_1 - x_2 \|$. 
Again using continuity and compactness, for all~$x \in \X$ and~$\tz \in \Ts$ there exists~$\Mx > 0$ such that
        $\| \nabla_x \g{x}{\tz} \| \leq \Mx$. 
        There also exists~$M_{g, z} > 0$ such that~$\sup_{x \in \X} g(x; t_z) \leq M_{g, z}$. 
Define~$L_x \coloneqq \max_{\tz \in \Ts} \ \max_{x \in \X}  \| \nabla_x^2 \g{x}{\tz} \|$. 
Then for~$\tz \in \Ts$ and~$x \in \X$, we have 
    $\nabla^2_x \g{x}{\tz} \preceq L_x I_n$. 
Then~$\g{\cdot}{\tz} \in \D$ for each~$\tz \in \Ts$
with~$\bar{u} = M_z$ and~$\psi = L_x$. And the function $\g{\cdot}{\tz}$ is Lipschitz, so 
for each~$\tz \in \Ts$
there exists~$\Lg{z}>0$ such that
        $| \g{x_1}{\tz} - \g{x_2}{\tz} | \leq \Lg{z} \| x_1 - x_2 \|$. 

To keep track of computations, we define~$s_i(k) \in \R^{n_i}$ as 
\begin{equation}
    \s{i}{k}{\ell}  \coloneqq \begin{cases}
                    \x{i}{i}{k+1} - \x{i}{i}{k} & k\in \K^{i}\\
                    0 & k \notin \K^{i}. \label{eq:1005}
\end{cases}
\end{equation} 
We also form~$\s{}{k}{\ell} \coloneqq \big( \s{1}{k}{\ell}^T, \dots, \s{N}{k}{\ell}^T \big)^T \in \R^n$.
For all~$\tz \in \Ts$ and for all~$k$ such that~$\eta_{z -1} \leq k \leq \eta_z$, we define 
\begin{align}
    \at{k}{z} &\coloneqq \g{\x{}{}{k}}{\tz} - \g{\x{*}{k}{\tz}}{\tz} \label{eq:1003}\\
    \bt{k} &\coloneqq \sum_{\tau=k-B }^{k-1} \|\s{}{\tau}{1}\|^2, \label{eq:1032} 
\end{align}
where\footnote{
The output of~$\argmin_{\x{*}{}{} \in \X^*(\tz)} \| \x{*}{}{} - \x{}{}{k}\|$
may contain points with different costs, and for~$\alpha$
the point with the lowest cost should be used. Below, we will 
evaluate~$\alpha$ at times 
at which each point in
$\argmin_{\x{*}{}{} \in \X^*(\tz)} \| \x{*}{}{} - \x{}{}{k}\|$ has the same cost,
and thus there will be no ambiguity. 
} 
$\x{*}{k}{\tz} \in \argmin_{\x{*}{}{} \in \X^*(\tz)} \| \x{*}{}{} - \x{}{}{k}\|$. 
The term~$\at{k}{z}$ is the suboptimality gap between the cost of the true state and the 
locally optimal cost, 
and~$\bt{k}$ is the sum of the magnitudes of the last~$B$ updates before iteration~$k$. 

\subsection{Convergence of Algorithm~\ref{alg:myAlg}} \label{ss:phases}
\gb{
After an agent samples the objective, there are four
behaviors that occur. First, the cost~$\g{\x{}{}{k}}{\tz}$
can increase for~$k \in \{\eta_{z-1}, \dots, \eta_{z-1} + B\}$. Delays of up to length~$B$ mean that agents may still communicate and 
compute using iterates from up to~$B$ timesteps before the objective 
was sampled, and those old iterates may not help minimize the current
objective function. 
Second, 
there exists a time~$\hat{k}_z$ such that 
the cost~$\g{\x{}{}{k}}{\tz}$ must decrease
for~$k \in  \{ \eta_{z-1} + B, \dots,  \eta_{z-1} + \hat{k}_z \}$. 
Third, after~$\hat{k}_z$ the value of~$\g{\x{}{}{k}}{\tz}$
decreases with linear rate toward~$\g{\x{*}{k}{\tz}}{\tz}$
until~$k$ reaches~$\eta_z$. 
Fourth, there is a potential abrupt increase in cost when an agent samples
the objective of Problem~\ref{prob:discrete} because agents' iterates
are constant while their objective changes. 
Figure~\ref{fig:behaviors} depicts this process. 
}

Below we use the constants
\begin{align}       
    \Dz &= \frac{ 2 - \gammaz \Lx ( 1 + B  + N B ) }{2}, \\
    \Ez &= \frac{\Lx N B}{2}, \\
    \Fz &=\frac{N \Lx^2}{2} \big( N ( 7 \Lx^2 + 6 \Lx + 3 ) + 3 \big) + \frac{ 3 }{2}   \Big(  BN ( 6 \Lx^4  + 12 \Lx^3  + 14 \Lx^2 )  + 3\Lx^2   + 6\Lx + 7  \\
    &\qquad\qquad\qquad\qquad\qquad\qquad\qquad\qquad\qquad+  N \lambda_z^2 ( \Lx^2 ( BN (6 \Lx^2  + 8 ) +  3 )  + 4  ) \Big) + 1 \\
    \Gz &=  \frac{N \Lx^2}{2} \big( N ( 7 \Lx^2 + 6 \Lx  + 3 ) + 3 \big) + \frac{\Lx B N}{2} + N B \Lx^2 \big(  9 \Lx^2 + 18 \Lx + 21   + N \lambda_z^2 (9 \Lx^2  + 12  ) \big),  \\
         a_0 &= \max \left\{ \Lg{0} \dx, 8 E_0 \Big( \frac{G_0}{F_0} + \frac{E_0}{D_0} \Big) F_0 B \dx^{2} D_0^{-1} \right\},  \label{eq:3037}\\
     b_0 &= \frac{D_0}{8 E_0 \Big( \frac{G_0}{F_0} + \frac{E_0}{D_0} \Big) F_0} a_0, \\
     \az &=  a_{z-1} \pt{r_{z-1} -1}{z-1} + 2 L_t \Delta + B \dx  \Mx  + \Lg{z} \sigma_{z} + 4 \Lx B^2 \dx^2 \Ez\left(\frac{\Gz}{\Fz} + \frac{\Ez}{\Dz}\right)\Fz \Dz^{-1} \\
     &\qquad\qquad\qquad\qquad\qquad\qquad\qquad\qquad\qquad\qquad\qquad\qquad\qquad\qquad\textnormal{ for } \tz 
    \in \Ts \backslash \{0\} \\
    \bz &= B \dx^2 \textnormal{ for } \tz  \in \Ts \backslash \{0\} \\
         \pt{}{z} &= 1 - \gammaz \cz \in (0,1) \textnormal{ for } \tz \in \Ts, \\
    \cz &= \frac{\Dz}{2 \Fz + 2 \Dz}, \\
         \gb{K_z} &= \gb{2 L_t \Delta + B \dx  \Mx  + \Lg{z} \sigma_{z}}  + 4 B^2  \Lx  \dx^2 \Ez\left(\frac{\Gz}{\Fz} + \frac{\Ez}{\Dz}\right)\Fz \Dz^{-1}.
        \label{eq:3038} 
\end{align}

\begin{lemma} \label{lem:phase3}
    Let {Assumptions~\ref{ass:contTime} -~\ref{ass:partialAsynch}} hold with~$\kappa_z = \kz + r_z B$ for some~$r_z \in \N_0$. For each~$\tz \in \Ts$, set
    \begin{multline}
        \gamma_{\max,z} \coloneqq 
        \min \Bigg\{ \frac{1}{2}, \frac{2}{ \Lx  ( 1 + B + 2 N B) }, 
                    \left( \frac{\Gz}{\Fz} + \frac{\Ez}{\Dz} \right)^{-1}, \\
                     \frac{ \frac{\az}{\bz} + 2\Ez + \Dz \cz - \sqrt{\big( \frac{\az}{\bz} + 2\Ez + \Dz \cz\big)^2 - 4 \Dz \Ez \cz}}{2 \Ez \cz},
                     \\
                    \frac{\Dz}{\Ez}, 
                    \frac{1}{2 \cz},
                    \frac{\Dz}{8 \Fz \Big( \frac{\Gz}{\Fz} + \frac{\Ez}{\Dz} \Big) \cz},
                    \frac{2}{3 \Lx B N + \Lx B}
                     \Bigg\}. \label{eq:gammaMax}
    \end{multline}
    Then for~$\gammaz \in (0,\gamma_{\max,z})$ and~$\x{}{}{k}$ from~\eqref{eq:1001}, the sequence~$\{ \x{}{}{k} \}_{k \in  \{ \eta_{z-1} + \kz,\dots,\eta_{z} \}}$ generated by~$N$ agents executing Algorithm~\ref{alg:myAlg} obeys
    $\at{\eta_{z-1} + \kz + \rz B}{z} \leq \az \pt{\rz-1}{z}$. 
\end{lemma}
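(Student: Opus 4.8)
The plan is to build a single Lyapunov-type potential $V_z(k)$ that blends the cost gap $\at{k}{z}$ with the recent-update energy $\bt{k}$, to show that $V_z$ contracts by the factor $\pt{}{z}=1-\gammaz\cz\in(0,1)$ over every window of $B$ consecutive iterations, and then to iterate this block contraction across the $\rz$ windows that fill $\{\eta_{z-1}+\kz,\dots,\eta_z\}$. Because the aggregate objective $\g{\cdot}{\tz}$ can be nonconvex (Theorem~\ref{thm:agg}), I cannot use strong convexity; instead the contraction rests on (i) the $\Lx$-smoothness of $\g{\cdot}{\tz}$ recorded in Section~\ref{ss:p2objectives} and (ii) a Luo--Tseng error bound~\cite{luo1992error,tseng1991rate}, which applies here because $\nabla_x\g{\cdot}{\tz}$ is affine and each $\X_i$ is polyhedral by Assumption~\ref{ass:setConstraint}. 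The whole argument is inductive in $z$: the term $a_{z-1}\pt{r_{z-1}-1}{z-1}$ inside $\az$ is exactly the conclusion of the lemma at index $z-1$.

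First I would derive a per-step descent inequality. Applying the descent lemma for $\Lx$-smooth functions to $\g{\x{}{}{k+1}}{\tz}$ and writing $\x{}{}{k+1}=\x{}{}{k}+\s{}{k}{1}$ produces a quadratic upper bound in $\nabla_x\g{\x{}{}{k}}{\tz}^{T}\s{}{k}{1}$ and $\|\s{}{k}{1}\|^2$. The difficulty is that each block $\s{i}{k}{1}$ is computed from the stale local copy $\x{i}{}{k}$ in~\eqref{eq:1000}, not from the true state $\x{}{}{k}$ in~\eqref{eq:1001}. Using nonexpansiveness of $\Pi_{\X_i}$ together with partial asynchrony (Assumption~\ref{ass:partialAsynch}), I would bound $\|\x{i}{}{k}-\x{}{}{k}\|$ by the magnitude of the updates made in the previous $B$ steps, which is precisely what $\bt{k}$ measures. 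Propagating these staleness terms yields an inequality of the form $\at{k+1}{z}\le\at{k}{z}-\Dz\|\s{}{k}{1}\|^2+\Ez\bt{k}$, where $\Dz>0$ holds exactly because $\gammaz<\gamma_{\max,z}$ forces $\gammaz\Lx(1+B+NB)<2$.

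Next I would convert the $-\Dz\|\s{}{k}{1}\|^2$ decrease into a decrease of the gap itself. Summing updates over one window and invoking the projected-gradient characterization $\X^*(\tz)=\{x:x=\Pi_\X[x-\nabla_x\g{x}{\tz}]\}$, the Luo--Tseng bound gives $\mathrm{dist}(\x{}{}{k},\X^*(\tz))\le\mu\,\|\x{}{}{k}-\Pi_\X[\x{}{}{k}-\nabla_x\g{\x{}{}{k}}{\tz}]\|$ for a constant $\mu$; combined with smoothness this bounds $\at{k}{z}$ by a multiple of the aggregate update energy over the window, which is where $\Fz$ and $\Gz$ enter. Here the partial-asynchrony guarantee that every agent computes at least once in each window of length $B$ is essential, so that the residual of the full vector, not just of some blocks, is captured. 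Substituting into the descent inequality and choosing the potential weights so that the coefficients collapse to $\cz=\Dz/(2\Fz+2\Dz)$ gives the block contraction $V_z(k+B)\le\pt{}{z}V_z(k)$; the remaining entries of the minimum in~\eqref{eq:gammaMax} are precisely the ceilings on $\gammaz$ needed to keep each intermediate coefficient of the right sign and to keep $\pt{}{z}$ below one.

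Finally I would iterate. The early part of index $\tz$ (the up-to-$B$-step increase, the forced decrease, and the objective switch at $\tz$) leaves the gap at $k=\eta_{z-1}+\kz$ bounded by $\az$, whose terms $2L_t\Delta$, $B\dx\Mx$, and $\Lg{z}\sigma_{z}$ account respectively for the sampling-induced jump, the initial increase, and minimizer drift (Assumptions~\ref{ass:lipTime},~\ref{ass:sample}, and~\ref{ass:sigma}), while the uniform bound $\bt{\cdot}\le\bz=B\dx^2$ controls the energy term. Applying the block contraction across the $\rz$ windows filling $\{\eta_{z-1}+\kz,\dots,\eta_z\}$ then yields $\at{\eta_{z-1}+\kz+\rz B}{z}\le\az\pt{\rz-1}{z}$. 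I expect the main obstacle to be the third step: reconciling the inexact, delay-corrupted descent with the nonconvex error bound so that the per-window map is a genuine contraction, and simultaneously checking that the single step-size ceiling $\gamma_{\max,z}$ makes $\Dz$, the potential weights, and $\pt{}{z}$ all behave as required.
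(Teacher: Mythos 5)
Your overall route coincides with the paper's: a per-block descent inequality (the paper's Lemma~\ref{lem:phase2}), a reverse inequality from the Luo--Tseng error bound that dominates the gap by recent update energy (Lemmas~\ref{lem:together} and~\ref{lem:moreBound}, with the constants $\Fz,\Gz$ entering exactly where you say), their combination into a contraction with rate $\pt{}{z}=1-\gammaz\cz$, and a double induction --- inner over the $r_z$ blocks of length $B$, outer over $z$ with the constant $\az$ absorbing the jump terms $2L_t\Delta$ (Lemma~\ref{lem:jump}), $B\dx\Mx$ (Lemma~\ref{lem:phase1}), and $\Lg{z}\sigma_z$ (Assumption~\ref{ass:sigma}), and with Lemma~\ref{lem:time0converge} as the base case. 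Your ``single Lyapunov potential'' is, in substance, the paper's simultaneous pair of inductive bounds $\at{\cdot}{z}\le a_z\pt{d-1}{z}$ and $\bt{\cdot}\le b_z\pt{d-1}{z}$ carried with a fixed ratio $b_z/a_z$; that is a presentational difference, not a different argument.

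There is, however, one genuine gap, and it is specific to the nonconvexity that this lemma must handle. The quantity $\at{k}{z}=\g{\x{}{}{k}}{\tz}-\g{\x{*}{k}{\tz}}{\tz}$ is measured against the cost of the point of $\X^*(\tz)$ \emph{nearest to the current iterate}. Because $\X^*(\tz)$ may contain local solutions with different costs, this reference value can change as $\x{}{}{k}$ moves between basins; then inequalities such as $\at{k+B}{z}\le\at{k}{z}-\gammaz^{-1}\Dz\bt{k+B}+\Ez\bt{k}$ compare gaps taken against possibly different local-minimum values and do not telescope, so your block contraction is not well-posed as stated. The paper closes this with Lemma~\ref{lem:kHat}: combining the strict separation of distinct-cost solutions (Lemma~\ref{lem:solSeparation}) with the summability of update energy implied by the descent inequality, it exhibits a finite $\kz$ after which $\|s(k)\|$ is below $\epsilon_z/\bigl(2(1+\lambda_z\min\{1,\gammaz^{-1}\})\bigr)$, forcing successive nearest minimizers to lie within $\epsilon_z$ of each other and hence to have equal cost for all $k\ge\eta_{z-1}+\kz$. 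This is the actual role of $\kz$ in the statement you are proving --- it pins down a constant reference cost --- whereas your plan treats $\kz$ only as the end of the initial increase-and-forced-decrease phase and silently assumes a fixed reference thereafter. Without an argument of this type (or some substitute that fixes the reference value), the third step of your plan fails precisely in the nonconvex case that Theorem~\ref{thm:agg} shows is unavoidable.
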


\begin{proof}
    See Appendix~\ref{lem:linearConverge}.
\end{proof}

\begin{figure}[!t] 
    \centering
    \begin{tikzpicture}
    \begin{axis}[
        xlabel={$k$},
        ylabel={$\g{x(k)}{\tz}$},
        axis x line=bottom,      
        axis y line=left,        
        width=9cm,              
        height=6cm,              
        x axis line style={->},   
        x label style={at={(axis description cs:1.03,0.04)},anchor=north},
        y axis line style={-},
        ytick=\empty,
        xtick=\empty,
        extra x ticks={1, 2, 3, 4, 5,6,7},
        extra x tick labels={$\eta_{z-1}$, $\eta_{z-1}+B$, $\eta_{z-1} + \kz$, $\eta_{z}$, $\eta_{z}+B$, $\eta_{z} + \hat{k}_{z+1}$, $\eta_{z+1}$},
       extra x tick style={
           grid=major,
           tick label style={rotate=315, anchor = north west } 
           }
    ]   
    \addplot[color = blue, ultra thick] coordinates {(1, 0) (2, 0.25)}; 
    \addplot[color = red, ultra thick] coordinates {(2, 0.25) (3,0)}; 
    \addplot[domain=3:4, green,  ultra thick,opacity=1] {1/(x-2)-1}; 
    \addplot[color = purple, ultra thick,dotted] coordinates {(4,-0.5) (4,0.5)}; 
    \addplot[color = blue, ultra thick]
            coordinates {(4, 0.5) (5, 0.75)};
    \addplot[color = red, ultra thick]
            coordinates {(5, 0.75) (6,0)};
    \addplot[domain=6:7, green,  ultra thick,opacity=1] {1/(x-5.25)-1.33};
    \addplot[color = purple, ultra thick,dotted]
            coordinates {(7,-0.75) (7,0.0)};
    \addplot[domain=0:10, blue, thick,opacity=0] {sin(deg(x))};

     \addplot[color = blue, ultra thick]
             coordinates {(1, 0) (2, 0.25)}; 
    \addplot[color = red, ultra thick]
             coordinates {(2, 0.25) (3,0)}; 
     \addplot[domain=3:4, green,  ultra thick,opacity=1] {1/(x-2)-1}; 
     \addplot[color = purple, ultra thick,dotted]
             coordinates {(4,-0.5) (4,0.5)}; 
     \addplot[color = blue, ultra thick]
             coordinates {(4, 0.5) (5, 0.75)};
     \addplot[color = red, ultra thick]
             coordinates {(5, 0.75) (6,0)};
     \addplot[domain=6:7, green,  ultra thick,opacity=1] {1/(x-5.25)-1.33};
     \addplot[color = purple, ultra thick,dotted]
             coordinates {(7,-0.75) (7,0.0)};
     \addplot[domain=0:10, blue, thick,opacity=0] {sin(deg(x))};

    \end{axis}
    
    \begin{axis}[
        axis x line=top,      
        axis y line=none,        
        width=9cm,              
        height=6cm,  
        ytick=\empty,
        axis line style={->},   
        xtick={1, 4, 7},
        xticklabels={$t_{z}$, $t_{z+1}$, $t_{z+2}$}
    ]

        \addplot[domain=0:10, blue, thick,opacity=0] {sin(deg(x))};
    
    \end{axis}
\end{tikzpicture}
    \vspace{-3mm}
    \caption{
    \gb{The four behaviors that Algorithm~\ref{alg:myAlg} can exhibit. 
    First, the blue lines depict the possible increase
    in the objective that can occur for~$k$
    from~$\eta_{z-1}$ to~$\eta_{z-1}+B$. 
    Second, the red lines depict 
    the guaranteed decrease in~$\g{\cdot}{\tz}$ for~$k$ 
    from~$\eta_{z-1} + B$ to~$\eta_{z-1} + \hat{k}_z$. 
    Third, the green curves depict the guaranteed linear convergence of~$\at{k}{z}$ for~$k$ 
    from~$\eta_{z-1} + \hat{k}_z$ to~$\eta_z$. 
    Fourth, the dotted purple lines show the potential increase in cost when~$\tz$ 
    is incremented to~$\tzz$.
    }
    }
    \label{fig:behaviors}
\end{figure}

Next we quantify Algorithm~\ref{alg:myAlg}'s performance
on Problem~\ref{prob:aggregate}. 

\begin{theorem} \label{thm:1}
    \gb{Let {Assumptions~\ref{ass:contTime}-\ref{ass:partialAsynch}} hold with~$\kappa_z = \kz + r_z B$ for some~$r_z \in \N_0$ and
    consider~$N$ agents executing Algorithm~\ref{alg:myAlg}. 
    Then for~$\tz \in \Ts$ and~$\rz \in \N_0$, 
    with stepsize~$\gamma_z \in (0, \gamma_{\max,z})$, 
    \begin{equation} \label{eq:thm2}
        \at{\eta_z}{z} \leq a_0 \prod^{z}_{i=0} \pt{r_i -1}{i} + \sum^{z}_{j=1} K_j \prod^{z}_{k=j} \pt{r_k - 1}{k}. 
    \end{equation}
    }
\end{theorem}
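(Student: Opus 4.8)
The plan is to derive~\eqref{eq:thm2} by combining the per-phase bound of Lemma~\ref{lem:phase3} with a single induction that unrolls the recursion defining~$\az$. Essentially all of the analytical difficulty lives in Lemma~\ref{lem:phase3}; once that per-phase linear decay is in hand, Theorem~\ref{thm:1} is a bookkeeping argument that telescopes the phase-to-phase constants into the closed-form product-and-sum expression.

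First I would reconcile the indices. Under the hypothesis~$\kappa_z = \kz + \rz B$ we have~$\eta_z = \eta_{z-1} + \kappa_z = \eta_{z-1} + \kz + \rz B$, so the conclusion of Lemma~\ref{lem:phase3} reads, for every~$\tz \in \Ts$,
\begin{equation}
    \at{\eta_z}{z} \leq \az \pt{\rz - 1}{z}. \label{eq:plan_perphase}
\end{equation}
Next I would observe that the constants in~\eqref{eq:3037}--\eqref{eq:3038} are engineered so that, after stripping the leading term~$a_{z-1}\pt{r_{z-1}-1}{z-1}$ from the definition of~$\az$, what remains is precisely the expression defining~$K_z$ in~\eqref{eq:3038} (up to the harmless reordering of~$4\Lx B^2$ as~$4 B^2 \Lx$). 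Hence the phase constants satisfy the first-order linear recursion~$\az = a_{z-1}\pt{r_{z-1}-1}{z-1} + K_z$ for all~$\tz \in \Ts\setminus\{0\}$.

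I would then unroll this recursion by induction on~$z$, using the conventions that an empty product equals~$1$ and an empty sum equals~$0$. The base case~$z=0$ is immediate from~\eqref{eq:plan_perphase}, since the right-hand side of~\eqref{eq:thm2} collapses to~$a_0 \pt{r_0-1}{0}$. For the inductive step, substituting~$\az = a_{z-1}\pt{r_{z-1}-1}{z-1} + K_z$ and applying the induction hypothesis for~$a_{z-1}$ gives the closed form~$\az = a_0 \prod_{i=0}^{z-1}\pt{r_i-1}{i} + \sum_{j=1}^{z} K_j \prod_{k=j}^{z-1}\pt{r_k-1}{k}$. Multiplying through by the factor~$\pt{\rz-1}{z}$ and invoking~\eqref{eq:plan_perphase} then yields exactly~\eqref{eq:thm2}.

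The main obstacle is not in this theorem but upstream: all of the real analysis---bounding the four behaviors of Figure~\ref{fig:behaviors}, controlling the transient increase over the first~$B$ steps, and establishing the geometric decay with rate~$\pt{}{z}$---is carried out in Lemma~\ref{lem:phase3}. Within Theorem~\ref{thm:1} itself, the only points demanding care are keeping the empty-product/empty-sum conventions consistent across the base case and the general term, and verifying the algebraic identity that makes~$\az - K_z = a_{z-1}\pt{r_{z-1}-1}{z-1}$ hold exactly, so that the recursion closes and telescopes cleanly.
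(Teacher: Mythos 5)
Your proposal is correct and follows essentially the same route as the paper: apply the per-phase bound of Lemma~\ref{lem:phase3} at $k=\eta_z$ (valid since $\kappa_z = \kz + \rz B$ gives $\eta_z = \eta_{z-1}+\kz+\rz B$), exploit the recursion $\az = a_{z-1}\pt{r_{z-1}-1}{z-1} + K_z$ built into the definitions~\eqref{eq:3037}--\eqref{eq:3038}, and iterate; the paper simply writes out the $z=0$ and $z=1$ cases and says ``iterating this bound gives the result,'' whereas you make the induction and the empty-product/empty-sum conventions explicit.
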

\begin{proof}
   From Lemma~\ref{lem:phase3}, for~$t_0$ and~$k=\kzero + r_0 B = \eta_0$ we have~$\at{\eta_0}{0}  \leq a_0 \pt{r_0-1}{0}$.
    Then for~$t_1$ and~$k=\eta_0+\kone+r_1 B = \eta_1$,  we have~$\at{\eta_1}{1} \leq a_1 \pt{r_1-1}{1}$.
    From the definition of~$a_1$, we reach
        $\at{\eta_1}{1} 
         \leq a_0 \pt{r_0 -1}{0} \pt{r_1-1}{1} + K_1 \pt{r_1-1}{1}$. \label{eq:3036}
    Iterating this bound gives the result. 
\end{proof}

Theorem~\ref{thm:1} shows the long-run dependence of
sub-optimality on the amount of time that agents have optimized
each objective function, which for~$g(\cdot; \tz)$
is encoded in~$r_{z}$. 
It incorporates the temporal variation of agents' objective through~$L_t$, and it incorporates agents' sampling through~$\Delta$. 

\begin{remark} \label{rem:iss1}
    \gb{Theorem~\ref{thm:1} is essentially a uniform ultimate boundedness (UUB) result. The first term on the right-hand side of~\eqref{eq:thm2} goes to zero as~$z$ increases since~$\rho_i \in (0,1)$ for all~$i \in \N_0$, and the second term can be bounded by a constant.
    Theorem~\ref{thm:2} below is a UUB result for similar reasons.}
\end{remark}

\subsection{Relating Problem~\ref{prob:discrete} and Problem~\ref{prob:aggregate}}

We next seek to bound how closely agents track the minimizer of Problem~\ref{prob:discrete} by 
tracking the minimizers of Problem~\ref{prob:aggregate},
which is our main result. 
First, we define the constants~$K_1 (n, \nu_\X, r_\X, d_\X ) = \frac{ \varphi^2 \pi^{n/2} }{ n 2^{n+3} \Gamma \left( \frac{n}{2} \right)  } \nu_\X \bigg( \frac{r_\X}{d_\X} \bigg)^n$
and
\gb{
    \begin{equation}
        K_2 \left( f,g,h \right)  = \max_{\tz \in \Ts} \max \Big\{ \| \f{\cdot}{\tz} - \h{\cdot}{\tz} \|_{L^2}^2, 
         \| \f{\cdot}{\tz} - \g{\cdot}{\tz} \|_{L^2}^2 \Big\}
    \end{equation}
    }    
    for~$f, g, h : \R^n \times \R_{+} \to \R$.
    We begin with the following, which
    is based on~\cite[Proposition A.2]{nozari2016differentially}  but derives
    a new bound that does not depend on a
    function inverse. 

\begin{lemma} \label{lem:argmin}
    For all~$\tz \in \Ts$ and any two functions~$f,h \in \mathcal{S}$,
    \begin{equation}
       \Big\| \underset{x\in \X}{\argmin} \ \f{x}{\tz} - \underset{x\in \X}{\argmin} \ \h{x}{\tz} \Big\| \leq
        \left( \frac{ 4 \bar{u}   }{ \varphi } \right)^{\frac{n}{2n+4}} 
       \left( \frac{ \| \f{\cdot}{\tz} - \h{\cdot}{\tz} \|_{L^2}^2  }{ K_1 \left(n, \nu_\X, r_\X, d_\X  \right) } \right)^{\frac{1}{2n+4}},
    \end{equation}
    where~$\nu_\X \in (0,1)$,~$r_\X > 0$ is the radius of the largest ball contained in~$\X$,~$d_\X$ is the diameter of~$\X$,~$\varphi >0$,
    and~$\bar{u}>0$. 
\end{lemma}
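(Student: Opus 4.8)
The plan is to fix an arbitrary $\tz \in \Ts$, write $x_f^\star = \argmin_{x\in\X}\f{x}{\tz}$ and $x_h^\star = \argmin_{x\in\X}\h{x}{\tz}$, set $d = \|x_f^\star - x_h^\star\|$, and control $d$ through the pointwise gap of $\phi := \f{\cdot}{\tz} - \h{\cdot}{\tz}$ rather than through a function inverse. Since $f,h\in\mathcal{S}$ are $\varphi$-strongly convex, the first-order optimality conditions at each constrained minimizer give $\f{x_h^\star}{\tz} - \f{x_f^\star}{\tz} \ge \tfrac{\varphi}{2}d^2$ and $\h{x_f^\star}{\tz} - \h{x_h^\star}{\tz} \ge \tfrac{\varphi}{2}d^2$. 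Adding these and regrouping yields $\phi(x_h^\star) - \phi(x_f^\star) \ge \varphi d^2$, so at least one of the two minimizers, call it $x_0$, satisfies $|\phi(x_0)| \ge \tfrac{\varphi}{2}d^2$. Using the value of $\phi$ at a minimizer (rather than a global maximum) is what supplies the extra factor of $\varphi$ that eventually appears as the $\varphi^2$ inside $K_1$.

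Next I would propagate this pointwise gap to a neighborhood. Because $\|\nabla\f{\cdot}{\tz}\|,\|\nabla\h{\cdot}{\tz}\| \le \bar u$ on $\X$, the difference $\phi$ is $2\bar u$-Lipschitz, so on the ball $B(x_0,\delta)$ with $\delta = |\phi(x_0)|/(4\bar u)$ one has $|\phi(x)| \ge |\phi(x_0)|/2$. Restricting the $L^2$ integral to $B(x_0,\delta)\cap\X$ then gives $\|\phi\|_{L^2}^2 \ge \tfrac{1}{4}|\phi(x_0)|^2\,\mathrm{Vol}(B(x_0,\delta)\cap\X)$, which converts the pointwise bound into a lower bound on the quantity $\|\f{\cdot}{\tz} - \h{\cdot}{\tz}\|_{L^2}^2$ appearing in the statement.

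The crux is a lower bound on $\mathrm{Vol}(B(x_0,\delta)\cap\X)$ that captures the geometry of $\X$. I would use that $\X$ is convex and contains a ball $B(c,r_\X)$: shrinking that inscribed ball toward $x_0$ by the factor $\lambda = \delta/(2d_\X)$ keeps it inside $\X$ (convexity) and inside $B(x_0,\delta)$, since the shrunken center and radius place its farthest point at distance $\lambda(|c-x_0|+r_\X)\le \lambda\cdot 2d_\X = \delta$ from $x_0$. This produces a ball of radius $\lambda r_\X$ inside $B(x_0,\delta)\cap\X$, and with $\mathrm{Vol}(B(\cdot,\rho)) = \tfrac{2\pi^{n/2}}{n\Gamma(n/2)}\rho^n$ this yields $\mathrm{Vol}(B(x_0,\delta)\cap\X) \ge \nu_\X \tfrac{2\pi^{n/2}}{n\Gamma(n/2)}(r_\X/d_\X)^n\delta^n$ for a constant $\nu_\X\in(0,1)$. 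Substituting $\delta = |\phi(x_0)|/(4\bar u)$, then $|\phi(x_0)| \ge \tfrac{\varphi}{2}d^2$, and collecting the powers of $2$, $\pi$, $\Gamma(n/2)$, $\bar u$, $\varphi$, and $r_\X/d_\X$ gives $d^{2n+4} \le \|\phi\|_{L^2}^2\,(4\bar u/\varphi)^n/K_1$ with exactly the stated $K_1$; taking the $(2n+4)$-th root is the claim.

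The main obstacle is the convex-geometry volume estimate, both in obtaining the correct dependence $(r_\X/d_\X)^n$ and in ensuring $\delta$ lies in the range $\lambda\le 1$ where the shrinking argument is valid; if $\delta$ exceeds this range I would cap it at a fixed multiple of $d_\X$, which only strengthens the pointwise-to-integral step. The remaining work is bookkeeping, namely tracking the constant through each step so that it lands on $K_1 = \tfrac{\varphi^2\pi^{n/2}}{n2^{n+3}\Gamma(n/2)}\nu_\X(r_\X/d_\X)^n$ rather than a loose multiple. Avoiding the function inverse used in~\cite[Proposition A.2]{nozari2016differentially} is precisely what the explicit power law above achieves, since the final inequality is obtained directly rather than by inverting a modulus-of-convexity map.
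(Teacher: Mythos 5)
Your route is genuinely different from the paper's. The paper treats \cite[Prop.~A.2]{nozari2016differentially} as a black box: it quotes that proposition's $L^2$ lower bound (which contains a square root involving both $\varphi$ and $\psi$), simplifies the root using $\varphi < \psi$, takes $n$-th roots to obtain a polynomial inequality $\|a-b\|^{2+4/n} - \alpha\|a-b\| - \beta \le 0$, and then discards the linear term before solving. You instead re-derive the $L^2$ lower bound from scratch: strong convexity plus first-order optimality at the two constrained minimizers gives a pointwise gap $|\phi(x_0)| \ge \tfrac{\varphi}{2}\|a-b\|^2$ at one minimizer, the $2\bar u$-Lipschitz property of $\phi = f - h$ spreads that gap over a ball, and an inscribed-ball volume estimate converts it into an integral bound. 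This buys two real advantages: your argument never needs the smoothness constant $\psi$ at all, and it produces a pure power inequality $\|a-b\|^{2n+4} \le C\,\|\phi\|_{L^2}^2$ directly, with no linear term to dispose of. That second point matters because the paper's own disposal step is delicate: from $d^{2+4/n} \le \alpha d + \beta$ one cannot in general conclude $d^{2+4/n} \le \beta$ (the "sufficient condition" implication runs the wrong way), whereas your derivation simply never creates the term $\alpha d$.

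There is, however, a concrete gap in your constant bookkeeping: the claim that your choices land "exactly" on the stated constant is false as written. Tracking your parameters ($|\phi(x_0)| \ge \tfrac{\varphi}{2}d^2$, $\delta = |\phi(x_0)|/(4\bar u)$, shrink factor $\lambda = \delta/(2 d_\X)$), the product of constants comes out to exactly $\frac{\varphi^2 \pi^{n/2}}{n 2^{n+3}\Gamma(n/2)}\big(\tfrac{r_\X}{d_\X}\big)^n \big(\tfrac{\varphi}{8\bar u}\big)^n$, i.e., you prove the lemma with $8\bar u$ in place of $4\bar u$ --- weaker by a factor of $2^{n/(2n+4)}$. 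Absorbing the stray $2^{-n}$ into $\nu_\X$ is not legitimate, since $\nu_\X \in (0,1)$ is a fixed parameter sitting inside the already-specified $K_1$; an $n$-dependent $\nu_\X$ changes $K_1$ itself. The fix stays entirely within your framework: the farthest point of the inscribed ball $B(c, r_\X)$ from $x_0$ lies in $\X$, as does $x_0$, so $\|c - x_0\| + r_\X \le d_\X$ by definition of the diameter; hence the shrink factor $\lambda = \delta/d_\X$ (not $\delta/(2d_\X)$) already places the shrunken ball inside $B(x_0,\delta)\cap\X$. With that single change the constants multiply out to $K_1 \big(\tfrac{\varphi}{4\bar u}\big)^n$ (indeed with $\nu_\X = 1$, so the stated bound holds a fortiori), and your capping argument for the regime $\delta > d_\X$ handles the case $\lambda > 1$. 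With this repair your proof is complete and, arguably, on firmer logical footing than the paper's.
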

\begin{proof}
    Let~$a = \argmin_{x \in \X} \f{x}{\tz}$ and $b = \argmin_{x \in \X} \h{x}{\tz}$.
    If~$a = b$, then the bound is trivial, so consider~$a \neq b$. 
    Let~$m_a = \f{a}{\tz}$,~$m_b=\h{b}{\tz}$,~$m = m_a - m_b$,~$u_a = \nabla_x \f{a}{\tz}$, and~$u_b = \nabla_x \h{b}{\tz}$. Without loss of generality take~$m \geq 0$. 
    Then~\cite[Prop. A.2]{nozari2016differentially} gives
    \begin{multline}
        \| \chi_{\tz} \|_{L^2}^2  \geq 
        \frac{ \varphi^2 \pi^{n/2} }{ n 2^{n+3} \Gamma \left( \frac{n}{2} \right)  } \nu_\X \bigg( \frac{r_\X}{d_\X} \bigg)^n  \| a - b \|^4 
         \left( \frac{ \varphi \| a - b \|^2 }{  2 \sqrt{ \varphi \psi \| a - b \|^2 + 2 ( \psi  + \varphi ) \bar{u} \| a - b \| + 4 \bar{u}^2  }  } \right)^n,
    \end{multline}
    where~$\chi_{\tz} \coloneqq \f{\cdot}{\tz} - \h{\cdot}{\tz}$. 
    We now extend that result to 
    derive an explicit bound on~$\| a - b \|$. 
    We write~$K_1$ for~$K_1(n, \nu_{\X}, r_{\X}, d_{\X})$. 
    Since~$\varphi < \psi$ by definition of~$\mathcal{S}$, we have    
        $\| \chi \|_{L^2}^2  \geq 
        K_1    \| a - b \|^4  
        \left( \frac{ \varphi \| a - b \|^2 }{  2 \big(\psi \| a - b \| + 2 \bar{u}\big)  } \right)^n$, 
    where we have also used
    $\psi^2 \| a - b \|^2 + 4 \psi  \bar{u} \| a - b \| + 4 \bar{u}^2 = \left( \psi \| a - b \| + 2 \bar{u} \right)^2$. 
    Taking the~$n^{th}$ root yields
    \begin{equation}    
     \| a - b \|^{2 + \frac{4}{n}} - \frac{ 2 \psi \sqrt[n]{\| \chi \|_{L^2}^2 }  }{ \varphi \sqrt[n]{K_1}}  \| a - b \| 
     - \frac{ 4 \bar{u} \sqrt[n]{\| \chi \|_{L^2}^2 }  }{ \varphi \sqrt[n]{K_1}} \leq 0. \label{eq:1021}
    \end{equation}
    A sufficient condition for~\eqref{eq:1021} is 
         $\| a - b \|^{2 + \frac{4}{n}} - \frac{ 4 \bar{u} \sqrt[n]{\| \chi \|_{L^2}^2 }  }{ \varphi \sqrt[n]{K_1}} 
         \leq 0$.
    Solving for~$\| a - b \|$ gives the result. 
\end{proof}

The main result of this article is as follows. 
    
\begin{theorem} \label{thm:2}
\gb{Let {Assumptions~\ref{ass:contTime}-\ref{ass:partialAsynch}} hold with~$\kappa_z = \kz + r_z B$ for some~$r_z \in \N_0$, 
    consider~$N$ agents executing Algorithm~\ref{alg:myAlg}, 
    and define~$h(x; t_z) = \frac{1}{2}\|x\|^2 + M_{g, z}$.
    For~$\tz \in \Ts$, with stepsize~$\gamma_z \in (0, \gamma_{\max, z})$, we have
    \begin{equation}
       | \g{\x{}{}{\eta_z}}{\tz} - f^*(\tz) | \leq \az \pt{\rz-1}{z} 
       +\left( \frac{ 4 \bar{u}   }{ \varphi } \right)^{\frac{n}{2n+4}}\left( \frac{ K_2 \left( f,g,h \right) }{ K_1 \left(n, \nu_\X, r_\X, d_\X  \right) } \right)^{\frac{1}{2n+4}}. \label{eq:1024}
    \end{equation}
}
\end{theorem}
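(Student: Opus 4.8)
The plan is to split the target with the triangle inequality into (i) the error with which the agents track the aggregate (possibly nonconvex) problem and (ii) the mismatch between the aggregate objective and the true sampled objective. Write $x^*(\eta_z;t_z)$ for the stationary point of $\g{\cdot}{\tz}$ nearest to $\x{}{}{\eta_z}$ and $x^*_f(\tz) \coloneqq \argmin_{x\in\X}\f{x}{\tz}$. I would start from
\[
|\g{\x{}{}{\eta_z}}{\tz} - f^*(\tz)| \le \big|\g{\x{}{}{\eta_z}}{\tz} - \g{x^*(\eta_z;t_z)}{\tz}\big| + \big|\g{x^*(\eta_z;t_z)}{\tz} - f^*(\tz)\big|.
\]
The first summand is exactly $\at{\eta_z}{z}$, and since $\kappa_z = \kz + \rz B$ gives $\eta_z = \eta_{z-1}+\kappa_z$, Lemma~\ref{lem:phase3} bounds it by $\az\pt{\rz-1}{z}$, which is the first term of the claim. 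It remains to bound the optimal-value mismatch $|\g{x^*(\eta_z;t_z)}{\tz} - f^*(\tz)|$ by the second term, uniformly in $z$.

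The mismatch term is where the auxiliary function $h$ enters. By Theorem~\ref{thm:agg}, $\g{\cdot}{\tz}$ can be nonconvex, hence $\g{\cdot}{\tz}\notin\mathcal{S}$, so Lemma~\ref{lem:argmin} cannot be applied to the pair $(f,g)$ directly. The function $\h{x}{\tz} = \frac{1}{2}\|x\|^2 + M_{g,z}$ is a strongly convex surrogate: its Hessian equals $I_n$, so $\h{\cdot}{\tz}\in\mathcal{S}$, and because $M_{g,z}\ge\sup_{x\in\X}\g{x}{\tz}$ it dominates $g$ pointwise, i.e.\ $\h{x}{\tz}\ge\g{x}{\tz}$ on $\X$. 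I would apply Lemma~\ref{lem:argmin} to the pair $(f,h)$, both of which lie in $\mathcal{S}$, to control the distance between their minimizers in terms of $\|\f{\cdot}{\tz}-\h{\cdot}{\tz}\|_{L^2}^2$, and then use the domination $h\ge g$ together with the $L^2$ closeness $\|\f{\cdot}{\tz}-\g{\cdot}{\tz}\|_{L^2}^2$ and the bounded-gradient and Lipschitz properties of the objectives established in Section~\ref{ss:p2objectives} to pass from the minimizer-distance estimate to the cost gap $|\g{x^*(\eta_z;t_z)}{\tz}-f^*(\tz)|$. Replacing both $L^2$ distances by their maximum $K_2(f,g,h)$ and maximizing over $\tz\in\Ts$ then yields the stated second term, which is independent of $z$.

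The main obstacle is precisely the nonconvexity exhibited in Theorem~\ref{thm:agg}: the minimizer-distance bound of Lemma~\ref{lem:argmin} requires strong convexity of both arguments, while Algorithm~\ref{alg:myAlg} only drives the iterates toward a stationary point $x^*(\eta_z;t_z)$ of $g$, not a global minimizer. The crux is therefore to show that routing through the convex surrogate $h$ --- which is simultaneously $L^2$-close to $f$ and an upper bound for $g$ --- controls the value gap despite this, and to convert an argument-space distance into a cost difference using the uniform Lipschitz constants of Section~\ref{ss:p2objectives}. Along the way I would verify the hypotheses of Lemma~\ref{lem:argmin} for $h$ (that $\varphi I_n\preceq\nabla^2 h\preceq\psi I_n$ and $\|\nabla h\|\le\bar{u}$ hold on the compact set $\X$ with the same $\varphi,\psi,\bar{u}$ as for $f$) and confirm the sign of $\at{\eta_z}{z}$ at the evaluation times noted in the footnote to~\eqref{eq:1003}, so that the first term legitimately controls the absolute value. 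Since the first term decays geometrically in $\rz$ while the second is a fixed constant set by the sampling mismatch, the conclusion is a uniform ultimate boundedness statement, consistent with Remark~\ref{rem:iss1}.
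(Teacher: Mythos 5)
Your decomposition and your treatment of the first term match the paper exactly: the same triangle inequality around $\g{\x{*}{k}{\tz}}{\tz}$ is used, and Lemma~\ref{lem:phase3} with $\eta_z = \eta_{z-1} + \kz + \rz B$ gives $\at{\eta_z}{z} \leq \az \pt{\rz-1}{z}$. The gap is in how you bound the mismatch term $|\g{\x{*}{k}{\tz}}{\tz} - f^*(\tz)|$. You route every case through the surrogate $h$, and the only mechanism you give for returning from $h$ to $g$ is the pointwise domination $\h{x}{\tz} \geq \g{x}{\tz}$ on $\X$. That domination yields $|\g{\x{*}{k}{\tz}}{\tz} - f^*(\tz)| \leq |\h{\x{*}{k}{\tz}}{\tz} - f^*(\tz)|$ only when $\g{\x{*}{k}{\tz}}{\tz} \geq f^*(\tz)$, so that both differences are nonnegative and enlarging $g$ enlarges the gap. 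In the opposite regime, $f^*(\tz) > \g{\x{*}{k}{\tz}}{\tz}$ --- which can occur, since $\g{\cdot}{\tz}$ is a different function from $\f{\cdot}{\tz}$ and its locally optimal value can lie strictly below $f^*(\tz)$ --- replacing $g$ by the larger $h$ shrinks the difference or flips its sign, and the inequality you need fails. This is precisely why the paper's proof splits into three cases: Case 1, $\g{\cdot}{\tz} \in \mathcal{S}$, where Lemma~\ref{lem:argmin} is applied to the pair $(f,g)$ directly; Case 2, $\g{\cdot}{\tz} \notin \mathcal{S}$ with $f^*(\tz) \geq \g{\x{*}{k}{\tz}}{\tz}$, where no surrogate is used at all --- instead the paper notes $\g{\cdot}{\tz} \in \D$, that $m = f^*(\tz) - \g{\x{*}{k}{\tz}}{\tz} \geq 0$, and that convexity of $\f{\cdot}{\tz}$ gives (in the notation of Lemma~\ref{lem:argmin}) $u_a^T(b-a) \geq 0$, so that every step in the derivation of Lemma~\ref{lem:argmin} goes through for the pair $(f,g)$ even though $g$ is not strongly convex; and Case 3, $f^*(\tz) < \g{\x{*}{k}{\tz}}{\tz}$, which is the only place the $h$-domination argument is valid and used. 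Your proposal covers Case 3 (and implicitly Case 1) but has no mechanism for Case 2. Relatedly, your assertion that Theorem~\ref{thm:agg} forces $\g{\cdot}{\tz} \notin \mathcal{S}$ is too strong: nonconvexity is possible, not guaranteed, which is why Case 1 exists.

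A secondary mismatch: you propose to convert the minimizer-distance estimate of Lemma~\ref{lem:argmin} into a cost gap by multiplying by the Lipschitz constants of Section~\ref{ss:p2objectives}. Even where your argument applies, this produces a bound carrying an extra factor of $\Lg{z}$, whereas the second term of~\eqref{eq:1024} is exactly the right-hand side of Lemma~\ref{lem:argmin} with $K_2(f,g,h)$ in place of the $L^2$ distance and with a maximum over $\tz \in \Ts$, with no such factor. The paper obtains the stated constant by applying Lemma~\ref{lem:argmin} (or its re-derived steps, which internally involve the cost quantities $m_a$, $m_b$, and $m$) directly to the appropriate pair of functions in each case, rather than by an argument-space-to-cost conversion; so your route, as written, would not reproduce the claimed inequality.
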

\begin{proof}
Define~$a$, $b$, $m_a$, $m_b$, $m$, and~$u_a$ as in Lemma~\ref{lem:argmin}. 
For each~$\tz \in \Ts$ 
and all~$k \in \{\eta_{z-1} + \hat{k}_z, \ldots, \eta_z\}$ we have 
    \begin{equation}
        | \g{\x{}{}{\eta_z}}{\tz} - f^*(\tz) | 
        = | \g{\x{}{}{\eta_z}}{\tz} - \g{\x{*}{k}{\tz}}{\tz} 
        + \g{\x{*}{k}{\tz}}{\tz} - f^*(\tz) |.
    \end{equation}
    Applying the triangle inequality and Lemma~\ref{lem:phase3} gives
    \begin{equation}
        | \g{\x{}{}{\eta_z}}{\tz} - f^*(\tz) |
        \leq \az{} \rhoz^{\rz-1} + | \g{\x{*}{k}{\tz}}{\tz} - f^*(\tz) |. \label{eq:1020}
    \end{equation}
    \textbf{CASE 1: } For~$\g{\cdot}{\tz} \in \mathcal{S}$
    Lemma~\ref{lem:argmin} gives the result. \\
    \textbf{CASE 2: } Suppose~$\g{\cdot}{\tz} \notin \mathcal{S}$ and~$ f^*(\tz) \geq \g{\x{*}{k}{\tz}}{\tz}$. For all~$\tz \in \Ts$, the function~$\g{\cdot}{\tz} \in C^2(\X)$ since it is quadratic and it has a bounded Hessian and gradient from Section~\ref{ss:p2objectives}. Therefore,~$\g{\cdot}{\tz} \in \D$.
    Note that 
        $m = m_a - m_b = f^*(\tz) - \g{\x{*}{k}{\tz}}{\tz}  \geq 0$,
    which holds by hypothesis. Also, since $g \in \D$ 
    we have 
        $\nabla^2_x \g{x}{\tz} \preceq L_x I_n$. 
    Due to the convexity of~$\f{\cdot}{\tz}$ we have
        $u_a^T (b - a) \geq 0$,
    and all steps used to derive Lemma~\ref{lem:argmin} still hold.
    We apply Lemma~\ref{lem:argmin} to~\eqref{eq:1020} to achieve the same bound as Case~1. 
    
    \noindent \textbf{CASE 3: } Suppose~$\g{\cdot}{\tz} \notin \mathcal{S}$ and~$f^*(\tz) < \g{\x{*}{k}{\tz}}{\tz}$. We have~$\g{\cdot}{\tz} \in \D$ using the same argument from Case 2.
    By inspection, the function~$h(x; t_z) = \frac{1}{2}\|x\|^2 + M_{g, z}$
    satisfies~$h(x; \tz) \geq g(x; \tz)$ for each~$\tz \in \Ts$ and for all~$x \in \X$. Thus,
         $| \g{\x{*}{k}{\tz}}{\tz} - f^*(\tz) |
         \leq | \h{\x{*}{k}{\tz}}{\tz} - f^*(\tz) |$.
    Since~$f,h \in \mathcal{S}$ for this choice of~$h$, we can apply Lemma~\ref{lem:argmin} to~\eqref{eq:1020}, which yields the same result as the prior cases. 
\end{proof}

The first term in the Theorem~\ref{thm:2} bound accounts for agents' tracking error of Problem~\ref{prob:aggregate}, which can be reduced by 
computing and communicating more often. The second term is partly dictated by the distance between~$f$,~$g$, and~$h$, and it can be reduced by agents sampling Problem~\ref{prob:discrete} more often. 

\subsection{Synchronization Corollaries}
Here we show how this work generalizes prior work. 

\begin{corollary} \label{cor:synch1}
        Synchronous sampling, i.e.,~$\T^i = \T^j$ for all~$i, j \in [N]$, gives
            $|\g{x(\eta_z)}{\tz} - f^*(\tz) | \leq \az \rhoz^{\rz-1}$  for all~${\tz \in \Ts}$. 
\end{corollary}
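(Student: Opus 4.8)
The plan is to show that synchronous sampling collapses the aggregate objective in Problem~\ref{prob:aggregate} onto the sampled objective in Problem~\ref{prob:discrete}, so that the residual term appearing in the proof of Theorem~\ref{thm:2} vanishes identically. First I would observe that $\T^i = \T^j$ for all $i,j \in [N]$ forces $\T^i = \Ts$ for every $i$, since $\Ts = \cup_{i=1}^N \T^i$. Consequently every $\tz \in \Ts$ is itself a sample time for every agent, so that the latest-sample map satisfies $\theta_i(\tz) = \tz$ for all $i \in [N]$. Feeding this into the definitions of $\Qmap{\tz}$ and $\rmap{\tz}$, every stacked block is now sampled at the common time $\tz$, whence $\Qmap{\tz} = \Q{}{\tz}$ and $\rmap{\tz} = \rr{}{\tz}$. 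Therefore $\g{\cdot}{\tz} = \f{\cdot}{\tz}$ for every $\tz \in \Ts$, i.e.\ the aggregate objective coincides exactly with the strongly convex sampled objective $f \in \mathcal{S}$.

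Next I would exploit this strong convexity. Because $\f{\cdot}{\tz} \in \mathcal{S}$ is strongly convex on the compact convex set $\X$ (Assumptions~\ref{ass:contTime} and~\ref{ass:setConstraint}), its projected-gradient fixed-point set $\X^*(\tz)$ reduces to the singleton consisting of the unique minimizer, whose value is precisely $f^*(\tz)$. Hence $\g{\x{*}{k}{\tz}}{\tz} = f^*(\tz)$ for every admissible $k$, so the residual term $|\g{\x{*}{k}{\tz}}{\tz} - f^*(\tz)|$ is zero in this synchronous regime.

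Finally I would run the argument of Theorem~\ref{thm:2} verbatim up through its intermediate bound~\eqref{eq:1020}, namely $|\g{\x{}{}{\eta_z}}{\tz} - f^*(\tz)| \leq \az \pt{\rz-1}{z} + |\g{\x{*}{k}{\tz}}{\tz} - f^*(\tz)|$, which follows from the triangle inequality together with Lemma~\ref{lem:phase3}. Discarding the now-vanishing residual yields $|\g{\x{}{}{\eta_z}}{\tz} - f^*(\tz)| \leq \az \pt{\rz-1}{z}$, as claimed. There is no serious obstacle here, since this is a specialization rather than a new estimate; the only point that needs care is confirming that $\theta_i(\tz) = \tz$ under synchronous sampling so that $\Upsilon$ and $\Xi$ genuinely reduce to the ordinary sampled matrices. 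In particular, one should argue the vanishing through the intermediate inequality~\eqref{eq:1020} rather than through the final Theorem~\ref{thm:2} bound, whose second term still retains the $\| \f{\cdot}{\tz} - \h{\cdot}{\tz} \|_{L^2}^2$ contribution inside $K_2$ and does not collapse on its own.
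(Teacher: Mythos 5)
Your proposal is correct and follows essentially the same route as the paper's own proof: invoke the intermediate bound~\eqref{eq:1020} from Theorem~\ref{thm:2}, observe that synchronous sampling forces $\g{\cdot}{\tz} = \f{\cdot}{\tz}$, and conclude that the residual term $| \g{\x{*}{k}{\tz}}{\tz} - f^*(\tz) |$ vanishes. The extra details you supply (that $\theta_i(\tz) = \tz$ collapses $\Upsilon$ and $\Xi$, and that strong convexity makes $\X^*(\tz)$ a singleton attaining $f^*(\tz)$) are exactly the steps the paper leaves implicit, and your caution to argue through~\eqref{eq:1020} rather than the final Theorem~\ref{thm:2} bound matches the paper's reasoning precisely.
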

\begin{proof}
   From~\eqref{eq:1020} we have
\begin{equation}
    | \g{\x{}{}{\eta_z}}{\tz} - f^*(\tz) |
    \leq \az{} \rhoz^{\rz-1}  + | \g{\x{*}{k}{\tz}}{\tz} - f^*(\tz) |. 
\end{equation}
Since~$\g{\cdot}{\tz} = \f{\cdot}{\tz}$ here, we have~$\g{\x{*}{k}{\tz}}{\tz} - f^*(\tz) = 0$ for all~$\tz \in \Ts$, which gives the result. 
\end{proof}

    Corollary~\ref{cor:synch1} shows that if agents sample Problem~\ref{prob:discrete} at 
    the same times, then we recover (up to constants) the linear convergence rate for distributed 
    asynchronous minimization of
    time-varying strongly convex functions from~\cite{behrendt2023totally}. 

\begin{corollary} \label{cor:synch2}
    If sampling, computations, and communications are synchronous, i.e.,~$\T^i = \T^j$ for all~$i, j \in [N]$ and $B=1$, then 
            $|\g{x(\eta_z)}{\tz} - f^*(\tz) | \leq \az \rhoz^{\kappa_z -1}$
        for all~$\tz \in \Ts$. 
\end{corollary}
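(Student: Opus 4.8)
The plan is to build directly on Corollary~\ref{cor:synch1}, which already dispatches the synchronous-sampling hypothesis $\T^i = \T^j$. Under synchronous sampling we have $\g{\cdot}{\tz} = \f{\cdot}{\tz}$, so the aggregate objective coincides with the strongly convex sampled objective and the residual term $|\g{\x{*}{k}{\tz}}{\tz} - f^*(\tz)|$ in~\eqref{eq:1020} vanishes, leaving $|\g{\x{}{}{\eta_z}}{\tz} - f^*(\tz)| \leq \az \rhoz^{\rz-1}$. The only remaining work is therefore to show that the additional hypothesis $B=1$ sharpens the exponent from $\rz - 1$ to $\kappa_z - 1$; equivalently, I must show $\rz = \kappa_z$.

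Next I would unpack what $B = 1$ forces through Assumption~\ref{ass:partialAsynch}. Part~1 with $B=1$ requires some element of the singleton $\{k\}$ to lie in $\K^{i}$ for every $k$, so $\K^{i} = \N_0$ and every agent computes at every tick. Part~2 gives $\max\{0,k\} \le \tau^i_j(k) \le k$, forcing $\tau^i_j(k) = k$, so there are no communication delays and each local copy satisfies $\x{i}{}{k} = \x{}{}{k}$. Consequently the update law collapses to the ordinary synchronous projected gradient step $\x{}{}{k+1} = \Pi_\X\big[\x{}{}{k} - \gammaz \nabla_x \g{\x{}{}{k}}{\tz}\big]$ on the common strongly convex objective, and for $\gammaz \in (0,\gamma_{\max,z})$ this step contracts the suboptimality gap $\at{k}{z}$ at \emph{every} iteration of the interval.

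Finally I would connect this to the phase decomposition behind Lemma~\ref{lem:phase3} and Figure~\ref{fig:behaviors}. The warm-up length $\kz = \hat{k}_z$ — the blue possible-increase and red guaranteed-decrease phases — is present only to absorb iterates that are stale by up to $B - 1$ steps; when $B = 1$ no stale iterates exist and the cost contracts from the first tick of the interval, so $\hat{k}_z = 0$. The relation $\kappa_z = \kz + \rz B$ then reads $\kappa_z = 0 + \rz$, i.e., $\rz = \kappa_z$, and substituting into the Corollary~\ref{cor:synch1} bound gives $|\g{\x{}{}{\eta_z}}{\tz} - f^*(\tz)| \le \az \rhoz^{\kappa_z - 1}$ for all $\tz \in \Ts$. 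The main obstacle I anticipate is the rigorous justification that $\hat{k}_z = 0$ when $B = 1$: this requires revisiting the proof of Lemma~\ref{lem:phase3} to confirm that the increase and guaranteed-decrease phases, whose lengths are tied to the delay bound $B$, collapse at $B=1$, so that the per-iteration contraction rate $\rhoz$ holds across the entire interval of $\kappa_z$ ticks rather than only over the $\rz$ post-warm-up rounds.
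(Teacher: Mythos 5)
Your skeleton matches the paper's: kill the residual term using $\g{\cdot}{\tz} = \f{\cdot}{\tz}$ (as in Corollary~\ref{cor:synch1}), then show $\kz = 0$ so that $\kappa_z = \kz + \rz B = \rz$ when $B=1$, and invoke Lemma~\ref{lem:phase3} with exponent $\rz - 1 = \kappa_z - 1$. However, there is a genuine gap at precisely the step you flag as your "main obstacle": your proposed justification that $\kz = 0$ \emph{because} $B=1$ eliminates stale iterates is the wrong mechanism, and the repair you sketch (revisiting Lemma~\ref{lem:phase3} to show the warm-up phases are "tied to the delay bound $B$" and collapse at $B=1$) would fail. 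The quantity $\kz$ is defined through Lemma~\ref{lem:kHat}: it is the time after which the cost of the nearest local minimizer, $g(\x{*}{k}{\tz};\tz)$, is constant in $k$. Its length is governed by the possible nonconvexity of the aggregate objective --- the iterate must get close enough to the solution set that consecutive nearest minimizers fall within the separation radius $\epsilon_z$ of Lemma~\ref{lem:solSeparation}, and that the local error bound of Lemma~\ref{lem:errorBound1} (which only applies once the projected-gradient residual is below $\delta_z$) becomes usable. None of this is controlled by $B$: with $B=1$ but asynchronous sampling, $\g{\cdot}{\tz}$ can still be nonconvex and $\kz$ can be strictly positive.

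The correct argument, and the one the paper uses, is a one-liner that has nothing to do with delays: under synchronous sampling, $\g{\cdot}{\tz} = \f{\cdot}{\tz} \in \mathcal{S}$ is strongly convex, so $\X^*(\tz)$ is a singleton; hence $\x{*}{k}{\tz} = \x{*}{k+1}{\tz}$ for every $k$, the nearest-minimizer cost is trivially constant, and $\kz = 0$ satisfies the defining property of Lemma~\ref{lem:kHat} regardless of $B$. Only then does $B=1$ enter, purely arithmetically, via $\kappa_z = \kz + \rz B = \rz$, after which Lemma~\ref{lem:phase3} gives $\at{\eta_{z-1}+\rz}{z} \leq \az \pt{\rz - 1}{z}$ and setting $\rz = \kappa_z$ yields the claim. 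You had strong convexity in hand (you even note the update collapses to projected gradient descent on "the common strongly convex objective"), but you attributed the collapse of the warm-up phase to the wrong hypothesis, so as written the proof is incomplete at its pivotal step.
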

\begin{proof}
   For all~$\tz \in \Ts$ and~$k \in \{ \eta_{z-1},\dots,\eta_{z} \}$, we have~$\g{\cdot}{\tz} = \f{\cdot}{\tz} \in \mathcal{S}$. For all~$k$, we have~$\x{*}{k}{\tz} = \x{*}{k+1}{\tz}$ from strong convexity of~$\g{\cdot}{\tz}$. Then~$\kz = 0$ for all~$\tz \in \Ts$. Using~$B=1$, Lemma~\ref{lem:phase3} then gives
        $\at{ \eta_{z-1} + \rz }{z} \leq \az \pt{\rz-1}{z}$
    for~$\rz \in \{0,\dots, \kappa_z \}$ and for all~$\tz \in \Ts$. 
    Setting~$\rz = \kappa_z $ yields the result. 
\end{proof}

    Corollary~\ref{cor:synch2} shows that if agents 
    sample Problem~\ref{prob:discrete} at the same times
    and compute and communicate at all times, 
    then we recover (up to constants) 
    the linear convergence rate for centralized gradient descent on 
    time-varying
    strongly convex functions from~\cite{simonetto2020time}.

\section{Numerical Results} \label{sec:simulation}
This section presents two examples using Algorithm~\ref{alg:myAlg}. 

\subsection{Time-Varying Quadratic Program} \label{ss:tvqp}
We consider~$N=10$ agents with block sizes~$n_i=2$ for all~$i \in [N]$ that asynchronously track the solution of the following problem for all~$t \in [0,50]$s: 
\begin{equation} \label{eq:simProb}
    \underset{x \in \X \subset \R^{20}}{\min} \ \frac{1}{2} x^T \Q{}{t} x + q(t)^T x,
\end{equation}
where~$\Q{}{t} = Q + I_{20} \cos (\omega t)$,~$q(t) = 100 \sin (2 \omega t) \in \R^{20}$,~$\omega = 0.1$, and~$Q = Q^T \succ 0 
\in \R^{20 \times 20}$ is randomly generated.
We set~$\X = [-100, 100]^{20}$. 
Agents can sample the objective every~$t_s = 2$ seconds and thus~$\T_{all} \coloneqq \{0,2,4,\dots,50\}$. At 
every~$\ind{}{\ell} \in \T_{all}$, agent~$i$ samples the objective function with probability~$p_{i,s} (k) = 0.5$.
We allow~$\kappa_\ell = 500$ iterations to occur between changes in the objective function, and we set~$\gamma_\ell = 0.001$ and~$B=10$ for all~$\ind{}{\ell} \in \Ts$.
At every iteration~$k$, agent~$i$ computes an update with probability~$p_{i,u} (k) = 0.6$ and communicates its block of decision variables~$\x{i}{i}{}$ with probability~$p_{i,c} (k) =0.6$.
If an agent does not compute or communicate for~$B-1$ timesteps, then 
we make it do so at the~$B^{th}$ timestep to satisfy Assumption~\ref{ass:partialAsynch}.

In Figures~\ref{fig:compare} and~\ref{fig:multipleB}, Algorithm~\ref{alg:myAlg} generates
iterates on the timescale ``Iterations~$(k)$'' and the changes in objective functions occur on the timescale ``Time~$(s)$''. 
\gb{Abrupt increases in cost are due to the changes in the agents' objective function
and are caused by at least one agent taking a sample of the time-varying objective function.
Agents' objective function is constant between these samples, and their performance
in optimizing a static objective is illustrated between samples.}


Figure~\ref{fig:compare} compares Algorithm~\ref{alg:myAlg} with a distributed consensus  optimization algorithm~\cite{nedic2010convergence} on an example with~$N=2$ agents.
Figure~\ref{fig:compare} shows  
the sub-optimality incurred by Algorithm~\ref{alg:myAlg}
is nearly always less than the sub-optimality incurred by the algorithm from~\cite{nedic2010convergence}. 
\newnew{Table~\ref{tab:comparison} quantifies the differences in the convergence of these algorithms.
``Error'' is the distance between the optimum and each of these algorithms' true states. 
We compare their root-mean-square (RMS) error computed over all~$k \in \{0, \ldots, 5000\}$ 
and the average error each algorithm incurs
at the last tick of~$k$ before each time the objective is sampled. 
Algorithm~\ref{alg:myAlg} incurs~$30.16$\% less RMS error than
the algorithm from~\cite{nedic2010convergence}, and 
on average it incurs~$92.4$\% less error just before each objective
is sampled, which indicates superior performance.
}
Figure~\ref{fig:compare} also shows that Algorithm~\ref{alg:myAlg}'s performance
is not far from its synchronous counterpart. 

\begin{figure}[!t]
    \centering
    \includegraphics[width = 0.48\textwidth]{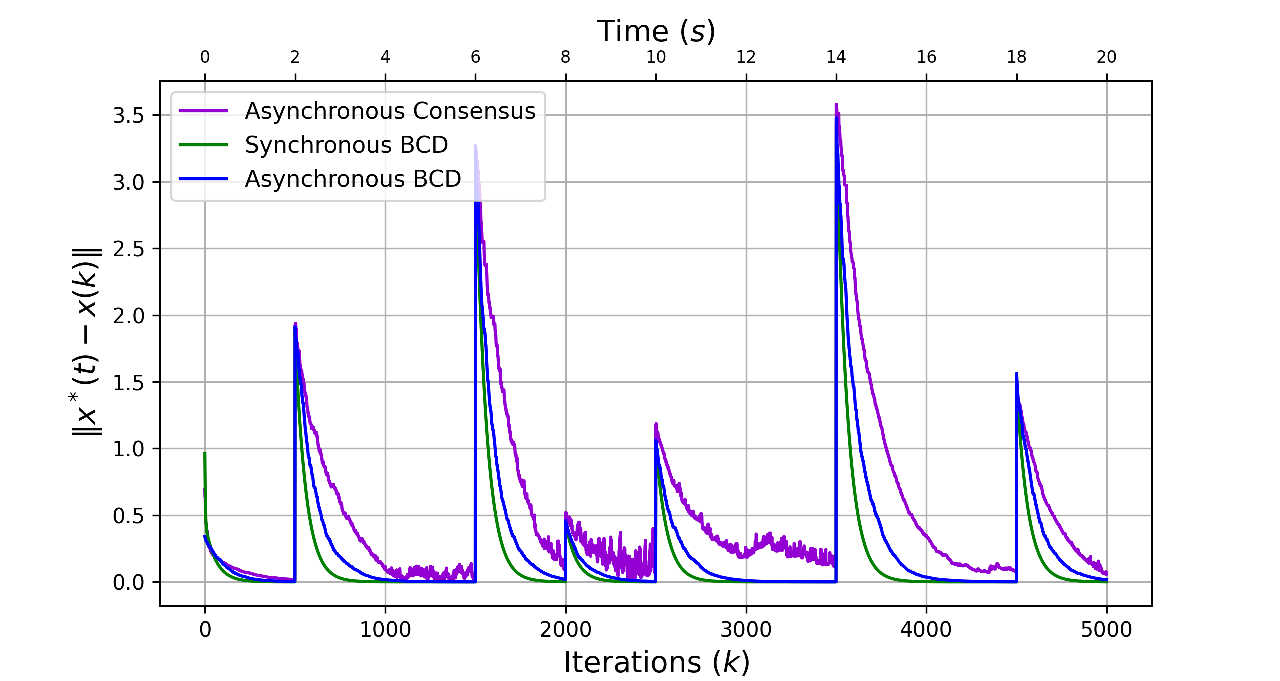}
    \vspace{-3mm}
    \caption{
    Tracking error between the true state of the network and the minimizer of~\eqref{eq:simProb},
    namely~$\| x^*(t) - x(k) \|$, for the asynchronous block coordinate descent (BCD) algorithm in Algorithm~\ref{alg:myAlg}, 
    the consensus optimization algorithm from~\cite{nedic2010convergence}, and synchronous BCD. 
    }
    \label{fig:compare}
\end{figure}


\begin{table}[!t]
\centering
\begin{tabular}{|c||c|c|c|} \hline
\diagbox{\newnew{Algorithm}}{\newnew{Type of Error}} & \newnew{RMS}        & \newnew{Avg. before sample}   \\ \hline\hline
\newnew{Algorithm~\ref{alg:myAlg}}        & \newnew{0.558} & \newnew{0.00983} \\ \hline
\newnew{From~\cite{nedic2010convergence}} & \newnew{0.799} & \newnew{0.130} \\ \hline
\end{tabular}
\caption{
\newnew{Comparing Algorithm~\ref{alg:myAlg} to the algorithm from~\cite{nedic2010convergence}.} 
}
\label{tab:comparison}
\end{table}

Lastly, we demonstrate the effect of the delay parameter~$B$ on the convergence of Algorithm~\ref{alg:myAlg}. 
We set~$p_{i,s}=1$ so that all agents sample the objective at each~$t_\ell \in \T_{all}$,
but we only have agents compute and communicate once every~$B$ timesteps. 
We set~$\gamma_\ell = 9 \times 10^{-4}$ for all~$t_\ell \in \Ts$.
Figure~\ref{fig:multipleB} shows~$\at{k}{z}$ for values of~$B=1,3,10,100$. 
As~$B$ increases, agents compute and communicate less, 
which slows the convergence of~$\x{}{}{k}$. 



\begin{figure}[!t]
    \centering
    \includegraphics[width = 0.44 \textwidth]{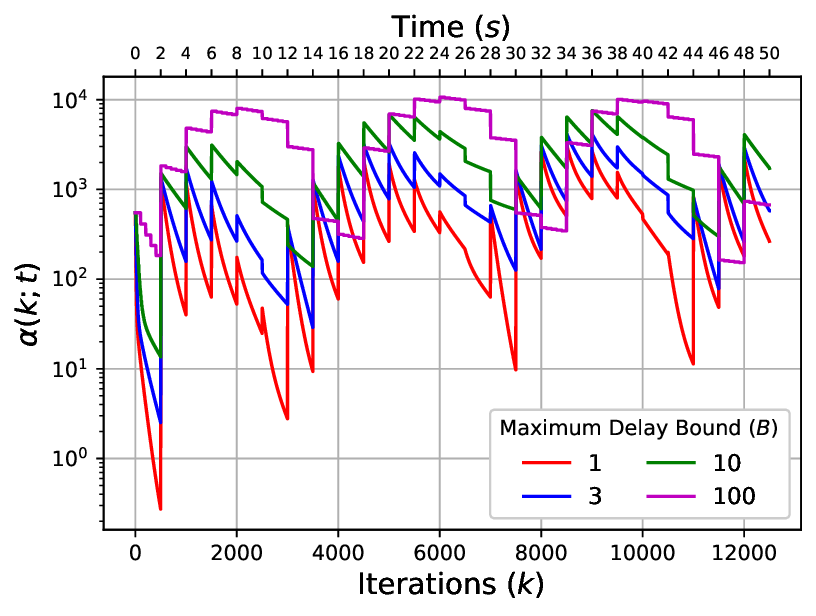}
    \vspace{-3mm}
    \caption{Plot of~$\alpha$ for the problem in Section~\ref{ss:tvqp} for varying~$B$. As~$B$ decreases, agents compute and communicate more often, thus shrinking the suboptimality gap~$\alpha$ and tracking~$x^*(t)$ more closely.}
    \label{fig:multipleB}
\end{figure}

\begin{figure}[!t]
    \centering
    \includegraphics[width = 0.44 \textwidth]{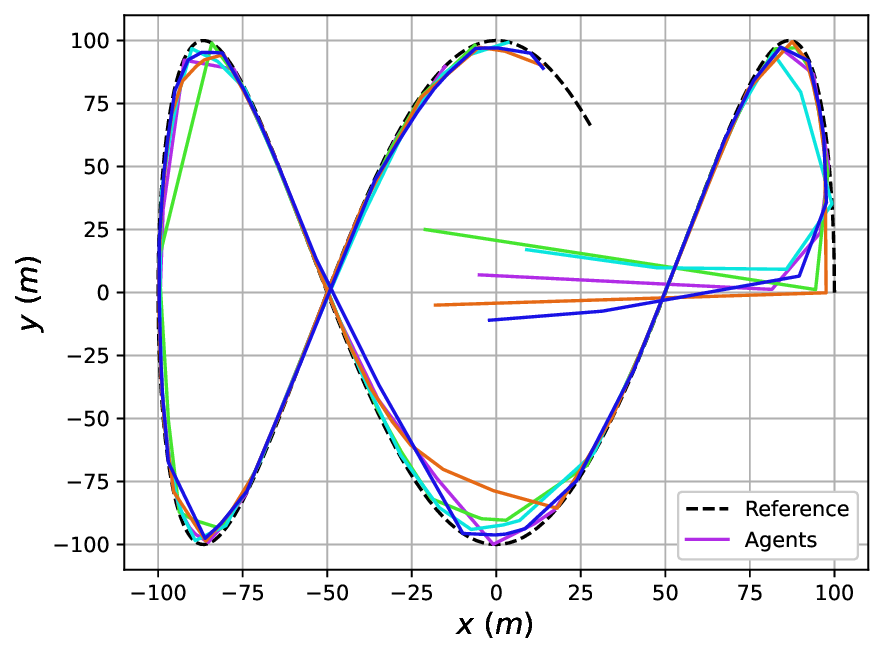}
    \vspace{-3mm}
    \caption{The positions~$z_i(t)$ for the problem in Section~\ref{ss:rt}. 
    Agents start far from the desired reference but closely track it thereafter. 
    Tracking error is due to agents possibly tracking an outdated sample of the reference, as well as asynchrony in their computations and communications.
    }
    \label{fig:trajectories}
\end{figure}

\subsection{Reference Tracking} \label{ss:rt}
In this simulation we consider~$N=5$ agents all tracking an \emph{a priori} unknown time-varying reference trajectory~$\bar{r}(t) \in \R^2$ for~$t \in [0,500]$s. 
We set~$ \bar{r}(t) = [ 100 \cos (\omega t), 100 \sin (3 \omega t) ]^T \in \R^2$ meters, 
where~$\omega = 0.01$ Hz. 
Agent~$i$ updates its position~$z_i = [x_i,y_i]^T \in \R^2$ for all~$i \in [N]$, and we concatenate the agents' states into~$z=[z_1^T,\dots,z_N^T]^T \in \R^{10}$.
For~$t \in [0,500]$s, agents track 
\begin{equation}
    \underset{z \in \Z \subset \R^{10}}{\argmin} \ \frac{1}{2} (z- \mathbbm{1}_N \otimes \bar{r}(t))^T Q (z - \mathbbm{1}_N \otimes \bar{r}(t)), 
\end{equation}
where~$Q = 10 \cdot I_{10}$ and~$\Z = [-200,200]^{10}$. 
Agents are capable of sampling the objective every~$t_s = 1$s, and therefore~$\T_{all} = \{0,1,\dots,500\}$. 
At each~$\ind{}{\ell} \in \T_{all}$, agent~$i$ samples~$\bar{r}(t)$ with probability~$p_{i,s} = 0.1$. 
At each iteration~$k$, agent~$i$ computes an update with probability~$p_{i,u} = 0.5$ and communicates its state 
with probability~$p_{i,c} = 0.5$.
As in Section~\ref{ss:tvqp}, agents that have not computed and/or communicated for~$B-1$ timesteps
are made to do so at the next timestep. 
We set~$\gamma_\ell = 3\times 10^{-3}$,~$\kappa_\ell = 10$ iterations, and~$B=5$ for all~$\ind{}{\ell} \in \Ts$. The agents' initial positions are randomly generated so that~$z_i(0) \in [-30,30] \times [-30,30]$ for all~$i \in [N]$.
Figure~\ref{fig:trajectories} shows the reference trajectory~$\bar{r}(t)$ and the agents' trajectories that result from running Algorithm~\ref{alg:myAlg}. Agents attain close tracking of their references despite asynchrony
in their operations, which illustrates the success of Algorithm~\ref{alg:myAlg} under asynchrony. 

\section{Conclusion} \label{sec:conclusion}
We presented the first distributed asynchronous time-varying optimization framework 
with asynchronous objective sampling and applied it to quadratic programs. 
\gb{Future extensions include wider classes of objectives and improving the possible conservativeness of the stepsize~$\gammaz$, perhaps
by extending methods for static optimization problems such as~\cite{feyzmahdavian2023asynchronous,ubl2022faster}.}

\appendix
\subsection{Technical Lemmas}
\begin{lemma}[\!\!{\cite[Lemma 3.1]{luo1992error}}] \label{lem:solSeparation}
    Let~$g$ be as in Problem~\ref{prob:aggregate}. 
    Then for each~$\tz \in \Ts$
    there exists~$\epsilon_z > 0$ such that for~$x^*, y^* \in \X^*(\tz)$ with~$\g{x^*}{\tz} \neq \g{y^*}{\tz}$, we have~$\|x^* - y^*\| \geq \epsilon_z$.
\end{lemma}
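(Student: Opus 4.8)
The plan is to reduce the claimed spatial separation to a gap between \emph{function values}: I would first show that $\g{\cdot}{\tz}$ takes only finitely many distinct values on the stationary-point set $\X^*(\tz)$, and then convert the minimal gap between those values into a distance bound using the Lipschitz continuity of $\g{\cdot}{\tz}$ established in Section~\ref{ss:p2objectives}. Concretely, fix $\tz \in \Ts$ and recall that $x \in \X^*(\tz)$ is equivalent to the first-order stationarity condition $-\nabla_x \g{x}{\tz} \in N_{\X}(x)$, where $N_{\X}(x)$ is the normal cone to $\X$ at $x$; this is the standard characterization of the projection fixed point $x = \Pi_{\X}[x - \nabla_x \g{x}{\tz}]$.

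The key structural step (following the polyhedral decomposition of Luo and Tseng) exploits that $\X$ is polyhedral by Assumption~\ref{ass:setConstraint}, and hence has finitely many faces, with every point of $\X$ lying in the relative interior of exactly one face. For each face $F$ the normal cone $N_{\X}(x)$ is constant over $\mathrm{relint}(F)$; call it $N_F$. Since $\g{\cdot}{\tz}$ is quadratic, its gradient is affine, so the piece $P_F := \mathrm{relint}(F) \cap \{x : -\nabla_x \g{x}{\tz} \in N_F\}$ is convex, and $\X^*(\tz) = \bigcup_F P_F$ over the finitely many faces. I would then argue that $\g{\cdot}{\tz}$ is constant on each $P_F$: for $x, x' \in P_F$ the whole segment joining them lies in $P_F$, its direction $x' - x$ lies in the tangent space of $F$, and that tangent space is orthogonal to $N_F$ (any $w \in N_F$ satisfies $w^T(y-x) \le 0$ for $y \in \X$, and applying this to $x \pm t(x'-x) \in F$ forces $w^T(x'-x)=0$); meanwhile $-\nabla_x \g{\cdot}{\tz}$ stays in $N_F$ along the segment. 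Hence the directional derivative of $\g{\cdot}{\tz}$ vanishes identically along the segment, so the endpoints share the same value. Finitely many faces then yield at most finitely many distinct values of $\g{\cdot}{\tz}$ on $\X^*(\tz)$.

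Having established finiteness, let $v_1, \dots, v_m$ enumerate those distinct values and, assuming $m \ge 2$ (otherwise the statement is vacuous), set $\delta_z := \min_{i \neq j} |v_i - v_j| > 0$. Then for any $x^*, y^* \in \X^*(\tz)$ with $\g{x^*}{\tz} \neq \g{y^*}{\tz}$, Lipschitz continuity with constant $\Lg{z}>0$ gives $\delta_z \leq |\g{x^*}{\tz} - \g{y^*}{\tz}| \leq \Lg{z}\, \|x^* - y^*\|$, so the claim holds with $\epsilon_z := \delta_z / \Lg{z} > 0$.

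I expect the main obstacle to be the finiteness-of-critical-values step, and specifically the verification that $\g{\cdot}{\tz}$ is constant on each polyhedral piece $P_F$; this is where the orthogonality between a face and its normal cone, together with the affineness of $\nabla_x \g{\cdot}{\tz}$, does the real work, and it is precisely the ingredient that fails for non-polyhedral $\X$. Everything downstream is an elementary Lipschitz estimate.
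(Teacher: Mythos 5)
Your proof is correct, but note that the paper itself offers no proof of this lemma: the statement is imported wholesale from Luo and Tseng via the citation in the lemma header, the point being that $\g{\cdot}{\tz}$ is quadratic and $\X$ is polyhedral (Assumption~\ref{ass:setConstraint}), which is exactly the setting of that cited result. What you have done is reconstruct the cited argument. Your decomposition of $\X^*(\tz)$ into the finitely many convex pieces $P_F$ indexed by the faces of $\X$, with constancy of $\g{\cdot}{\tz}$ on each $P_F$ proved through the orthogonality of face directions to $N_F$ and the affineness of $\nabla_x \g{\cdot}{\tz}$, is precisely the mechanism behind Luo--Tseng's finiteness-of-critical-values argument, and each step you give (the normal-cone characterization of $\X^*(\tz)$, convexity of $P_F$, the vanishing directional derivative along segments in $P_F$, the partition of $\X$ by relative interiors of faces) checks out. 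Where you genuinely deviate is the finish: Luo and Tseng obtain separation from the fact that disjoint polyhedral pieces lie at positive distance, a purely polyhedral argument that needs no boundedness, whereas you convert the minimal critical-value gap $\delta_z$ into a distance bound using the Lipschitz constant $\Lg{z}$ from Section~\ref{ss:p2objectives}, obtaining the explicit constant $\epsilon_z = \delta_z / \Lg{z}$. Your finish is more elementary and produces a concrete $\epsilon_z$, but it leans on compactness of $\X$ to get a global Lipschitz bound, so it is slightly less general than the argument it replaces; within this paper's assumptions that costs nothing.
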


    Lemma~\ref{lem:solSeparation} shows that points in~$\X^*(\tz)$ 
    with different costs are strictly separated, which
    will be used in our later analyses. 

\begin{lemma} \label{lem:errorBound1}
    Let~$g$ be as in Problem~\ref{prob:aggregate}. For every~$\omega_z > 0$ and $\tz \in \Ts$, there exist~$\delta_z , \lambda_z > 0$ such that for all~$x \in \X$ with~$\g{x}{\tz} \leq \omega_z$ and~${\| x - \Pi_\X \big[ \x{}{}{} - \nabla_x \g{\x{}{}{}}{\tz} \big] \| \leq \delta_z}$, 
    \begin{equation}
        \min_{x^* \in \X^*(\tz)} \| x - x^* \| \leq \lambda_z \| x - \Pi_\X \left[ \x{}{}{} - \nabla_x \g{\x{}{}{}}{\tz} \right] \|. \label{eq:76}
    \end{equation}
\end{lemma}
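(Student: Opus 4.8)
The plan is to exploit the piecewise-affine structure of the stationarity residual, which reduces the claim to a Hoffman-type error bound for a finite family of polyhedra, in the spirit of the Luo--Tseng theory~\cite{luo1992error} that already supplies Lemma~\ref{lem:solSeparation}. First I would note that, because $\g{\cdot}{\tz}$ is quadratic, its gradient $\nabla_x \g{x}{\tz} = (\Qmap{\tz} + \Qmap{\tz}^T) x + \rmap{\tz}$ is affine in $x$, and that $\X$ is polyhedral by Assumption~\ref{ass:setConstraint}. Since the Euclidean projection onto a polyhedron is piecewise affine, the residual map $R(x) \coloneqq x - \Pi_\X[x - \nabla_x \g{x}{\tz}]$ is piecewise affine: there are finitely many polyhedra $P_1, \dots, P_m$ covering $\X$ on each of which $R(x) = A_p x + b_p$ for fixed $A_p, b_p$. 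Consequently $\X^*(\tz) = \{ x \in \X : R(x) = 0 \}$ is a finite union of polyhedra, even though $\g{\cdot}{\tz}$ may be nonconvex.

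The core argument I would run by contradiction together with compactness. Suppose no pair $(\delta_z, \lambda_z)$ works; then there is a sequence $x^k$ in the compact level set $\{ x \in \X : \g{x}{\tz} \leq \omega_z \}$ with $\|R(x^k)\| \to 0$ yet $\min_{x^* \in \X^*(\tz)} \|x^k - x^*\| / \|R(x^k)\| \to \infty$. Passing to a subsequence, $x^k \to \bar{x}$, and continuity of $R$ gives $R(\bar{x}) = 0$, so $\bar{x} \in \X^*(\tz)$. For large $k$ the iterates lie in the finitely many pieces $P_p$ whose closures contain $\bar{x}$, and on each such piece $R$ is the affine map $A_p x + b_p$ with $A_p \bar{x} + b_p = 0$. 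Hoffman's error bound applied to the linear system $A_p x + b_p = R(x^k)$ then yields $\mathrm{dist}(x^k, S_p) \leq c_p \|R(x^k)\|$, where $S_p = \{ x \in P_p : A_p x + b_p = 0 \} \subseteq \X^*(\tz)$ and $c_p$ depends only on $A_p$. Taking $\lambda_z$ to be the largest such $c_p$ over the finitely many active pieces contradicts the assumed blow-up and simultaneously delivers a uniform modulus.

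The delicate point, and the place where nonconvexity bites, is ensuring that the \emph{local} bound $\mathrm{dist}(x^k, S_p)$ controls the \emph{global} quantity $\min_{x^* \in \X^*(\tz)} \|x^k - x^*\|$ in~\eqref{eq:76}. Because $\g{\cdot}{\tz}$ can be nonconvex (Theorem~\ref{thm:agg}), $\X^*(\tz)$ may split into several components at different cost levels, and the global minimum could a priori be attained at a far-away, lower-lying component. Here Lemma~\ref{lem:solSeparation} is essential: solutions with distinct costs are separated by at least $\epsilon_z$, so choosing the residual threshold $\delta_z$ small enough that $\lambda_z \delta_z < \epsilon_z / 2$ forces the nearest minimizer to lie in the same component as $\bar{x}$, whence $S_p$ realizes the global distance and the bound transfers. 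The main obstacle is exactly this localization-versus-globalization tension: without convexity there is no global error bound, so the argument must stay within the residual radius $\delta_z$ and must use the separation constant from Lemma~\ref{lem:solSeparation} to rule out interference from other components; the polyhedral/affine structure (via Hoffman, following~\cite{luo1992error}) is what makes the local estimate, and its uniformity over the compact level set, go through.
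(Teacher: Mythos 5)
Your proposal is correct, but it does substantially more work than the paper: the paper's entire proof is a one-line appeal to a known result, observing that $\g{\cdot}{\tz}$ is quadratic and~$\X$ is polyhedral and then citing~\cite[Theorem 2.3]{luo1992error}. What you have written is, in effect, a self-contained reconstruction of that cited theorem, via the same mechanism that underlies it: the stationarity residual of a quadratic function over a polyhedron is piecewise affine, so its zero set is a finite union of polyhedra, and Hoffman's bound on each active piece combined with a compactness/contradiction argument yields a uniform local modulus~$\lambda_z$. (One simplification available to you: here~$\X$ is compact by Assumption~\ref{ass:setConstraint}, so the level-set hypothesis $\g{x}{\tz}\le\omega_z$ is not needed to extract a convergent subsequence; that hypothesis is inherited from the Luo--Tseng setting, where the feasible set may be unbounded.) The paper's approach buys brevity; yours buys self-containedness and makes explicit exactly where polyhedrality and the quadratic structure enter.

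One point of your argument is misdirected, though harmlessly so: the ``localization-versus-globalization tension'' in your final paragraph does not exist. The quantity to be bounded, $\min_{x^*\in\X^*(\tz)}\|x^k-x^*\|$, is a distance to the \emph{entire} solution set, and since $S_p\subseteq\X^*(\tz)$ you obtain $\min_{x^*\in\X^*(\tz)}\|x^k-x^*\|\le \mathrm{dist}(x^k,S_p)\le c_p\|R(x^k)\|$ immediately; a far-away, lower-cost component of~$\X^*(\tz)$ can only make the left-hand side smaller, which helps rather than hurts. The invocation of Lemma~\ref{lem:solSeparation} and the extra constraint $\lambda_z\delta_z<\epsilon_z/2$ are therefore unnecessary for this lemma. (The separation result is genuinely needed elsewhere in the paper, in Lemma~\ref{lem:kHat}, where one must argue that successive nearest minimizers share the same cost; that is the setting your concern actually describes.)
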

\noindent \emph{Proof.}
    Problem~\ref{prob:aggregate} is quadratic over a polyhedral constraint set~$\X$, and
    this result holds via~\cite[Theorem 2.3]{luo1992error}.
\hfill $\blacksquare$ 

Lemma~\ref{lem:errorBound1} is the Error Bound Condition, and it will be used to analyze the nonconvex
objectives in Problem~\ref{prob:aggregate}. 

\begin{lemma} \label{lem:noGammaUpdate}
    For all~$\tz \in \Ts$, any~$\gammaz > 0$, and all~$x\in \X$,
    \begin{equation}
        \| x - \Pi_\X \big[ \x{}{}{} -   \nabla_x \g{\x{}{}{}}{\tz} \big] \| 
        \leq \max\{ 1,\gammaz^{-1} \} \| x - \Pi_\X \big[ \x{}{}{} -  \gammaz \nabla_x \g{\x{}{}{}}{\tz} \big] \|. \label{eq:77}
    \end{equation}
\end{lemma}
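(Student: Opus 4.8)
The plan is to reduce the statement to two classical monotonicity properties of the Euclidean projection residual, usually attributed to Gafni and Bertsekas. Fix~$\tz \in \Ts$ and~$x \in \X$, write~$F \coloneqq \nabla_x \g{x}{\tz}$ for the (here fixed) gradient, and define the residual~$r(\gamma) \coloneqq \| x - \Pi_\X[x - \gamma F] \|$ for~$\gamma > 0$. I would use the two facts that~(i)~$r(\gamma)$ is non-decreasing in~$\gamma$, and~(ii)~$r(\gamma)/\gamma$ is non-increasing in~$\gamma$. Given these, the lemma follows by a two-case argument on~$\gammaz$. If~$\gammaz \geq 1$, then~$\max\{1, \gammaz^{-1}\} = 1$ and property~(i) gives~$r(1) \leq r(\gammaz)$. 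If~$\gammaz < 1$, then~$\max\{1,\gammaz^{-1}\} = \gammaz^{-1}$ and property~(ii) gives~$r(1) = r(1)/1 \leq r(\gammaz)/\gammaz$, i.e.,~$r(1) \leq \gammaz^{-1} r(\gammaz)$. In both cases~$r(1) \leq \max\{1,\gammaz^{-1}\}\, r(\gammaz)$, which, since~$r(1)$ and~$r(\gammaz)$ are exactly the left- and right-hand residuals in~\eqref{eq:77} (the step size~$1$ and step size~$\gammaz$ projections, respectively), is the claim.

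To prove properties~(i) and~(ii), I would work from the variational characterization of the projection: for every~$\gamma > 0$ and every~$z \in \X$, the point~$p(\gamma) \coloneqq \Pi_\X[x - \gamma F]$ satisfies~$\langle x - \gamma F - p(\gamma),\, z - p(\gamma) \rangle \leq 0$. Taking two step sizes~$0 < \gamma_1 \leq \gamma_2$, I would apply this at~$\gamma_1$ with~$z = p(\gamma_2)$ and at~$\gamma_2$ with~$z = p(\gamma_1)$, then add and simplify to obtain the key estimate~$\| p(\gamma_2) - p(\gamma_1) \|^2 \leq (\gamma_2 - \gamma_1)\langle F,\, p(\gamma_1) - p(\gamma_2) \rangle$, which in particular forces~$\langle F,\, p(\gamma_2) - p(\gamma_1) \rangle \leq 0$. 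Expanding~$r(\gamma)^2 = \| x - p(\gamma) \|^2$ and comparing the two step sizes using these inequalities then yields both monotonicity statements. Since the constraint set in Problem~\ref{prob:aggregate} is polyhedral and these residual facts are standard, an equally acceptable route is to cite them directly, which is cleaner in context.

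The main obstacle is property~(ii): showing that~$r(\gamma)/\gamma$ is non-increasing requires dividing through by the (distinct) step sizes and tracking the sign of the cross term~$\langle F,\, p(\gamma_1) - p(\gamma_2) \rangle$, which is more delicate than the essentially immediate argument for~(i). Importantly, throughout this lemma~$F = \nabla_x \g{x}{\tz}$ is held fixed and only the scalar step size varies, so no convexity or regularity of~$g$ beyond differentiability at the single point~$x$ is needed; in particular the possible nonconvexity of Problem~\ref{prob:aggregate} plays no role, since~\eqref{eq:77} is a pointwise statement purely about Euclidean projections.
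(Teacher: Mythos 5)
Your proposal is correct and is essentially the same approach as the paper: the paper's entire proof of this lemma is a one-line citation to~\cite[Lemma 3.1]{tseng1991rate}, which is precisely the statement at hand, and your two-case reduction to the classical Gafni--Bertsekas monotonicity properties of the projection residual (nondecreasing in the step size, nonincreasing after dividing by the step size) is exactly the standard argument underlying that cited lemma. Your closing observation that the inequality is a pointwise statement about projections with a fixed vector in place of the gradient, so the nonconvexity of~$\g{\cdot}{\tz}$ is irrelevant, is also consistent with how the paper invokes the result.
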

\noindent \emph{Proof.}
    This follows from~\cite[Lemma 3.1]{tseng1991rate}.
\hfill $\blacksquare$ 

Despite the nonconvexity of Problem~\ref{prob:aggregate}, 
we can use Lemmas~\ref{lem:errorBound1} and~\ref{lem:noGammaUpdate} because it is quadratic over~$\X$ for all~$\tz \in \Ts$. 

\subsection{Convergence Lemmas}
\gb{This section provides lemmas used in Appendices~\ref{app:towardlemma1proof} and~\ref{lem:linearConverge}. 
Lemmas~\ref{lem:descent2}-\ref{lem:outOfDate} are used to prove that the cost decreases over intervals of length~$B$ in Lemmas~\ref{lem:phase2} and~\ref{lem:designGamma}. 
Then Lemmas \ref{lem:update}-\ref{lem:time0converge} are used to derive a linear convergence rate of Algorithm~\ref{alg:myAlg} when solving Problem~\ref{prob:aggregate}
in Lemma~\ref{lem:phase3} and Theorem~\ref{thm:1}. 
}

\begin{lemma} \label{lem:descent2}
If~${h:\R^n \rightarrow \R}$ is continuously differentiable and has 
the property $\Vert \nabla h(x) - \nabla h(y) \Vert \leq K \Vert x-y \Vert$ for every~${x,y \in \R^n}$, then~$h\left(x+y\right) \leq h\left(x\right)+y^{T}\nabla h\left(x\right)+\frac{K}{2}\left\Vert y\right\Vert^{2}$. 
\end{lemma}
\noindent \emph{Proof: } See~\cite[Proposition A.32]{bertsekas1989parallel}. \hfill $\blacksquare$

\begin{lemma} \label{lem:descent}
For all~$i \in [N]$,~$k \geq 0$, and~$\tz \in \Ts$, we have~$\s{i}{k}{\ell}^T \nabla_{\x{}{i}{}} \g{\x{i}{}{k}}{\tz} \leq  -\frac{1}{\gammaz} \|\s{i}{k}{\ell}\|^2.$ 
\end{lemma}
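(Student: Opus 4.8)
The plan is to prove the inequality by cases on whether $k \in \K^{i}$, reducing the substantive case to the variational characterization of the Euclidean projection onto the convex set $\X_i$.

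First I would dispose of the case $k \notin \K^{i}$. By the definition of $\s{i}{k}{\ell}$ in \eqref{eq:1005}, we have $\s{i}{k}{\ell} = 0$, so both sides of the claimed inequality equal zero and it holds with equality regardless of the value of $\gammaz > 0$.

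Next, for $k \in \K^{i}$, the update law \eqref{eq:update} gives $\x{i}{i}{k+1} = \Omega_i(\x{i}{}{k}) = \Pi_{\X_i}\bigl[\x{i}{i}{k} - \gammaz \nabla_{\x{}{i}{}}\g{\x{i}{}{k}}{\tz}\bigr]$, using the identification $\nabla_{\x{}{i}{}}\g{\x{i}{}{k}}{\tz} = Q^{[i]}(\theta_i(\tz))\x{i}{}{k} + r^{[i]}(\theta_i(\tz))$, and hence $\s{i}{k}{\ell} = \x{i}{i}{k+1} - \x{i}{i}{k}$. The key step is to invoke the first-order optimality condition for the projection: since $\X_i$ is nonempty, closed, and convex by Assumption~\ref{ass:setConstraint}, for every $v$ and every $z \in \X_i$ the point $w = \Pi_{\X_i}[v]$ obeys $(w - v)^T(z - w) \geq 0$. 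I would apply this with $v = \x{i}{i}{k} - \gammaz \nabla_{\x{}{i}{}}\g{\x{i}{}{k}}{\tz}$, so that $w = \x{i}{i}{k+1}$, and with the test point $z = \x{i}{i}{k}$.

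Writing $s = \s{i}{k}{\ell} = w - \x{i}{i}{k}$ and abbreviating $\nabla = \nabla_{\x{}{i}{}}\g{\x{i}{}{k}}{\tz}$, I observe that $w - v = s + \gammaz \nabla$ and $z - w = -s$, so the projection inequality reads $(s + \gammaz \nabla)^T(-s) \geq 0$, i.e., $-\|s\|^2 - \gammaz \nabla^T s \geq 0$. Dividing by $\gammaz > 0$ and rearranging yields $\nabla^T s \leq -\frac{1}{\gammaz}\|s\|^2$, which is the claim. The only subtlety, and the single point that needs care, is that the test point $\x{i}{i}{k}$ must be feasible for the variational inequality to apply; this holds because every computed update is itself a projection onto $\X_i$ and the initial iterate is taken in $\X_i$, so $\x{i}{i}{k} \in \X_i$ for all $k$. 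Beyond this, the argument is a short algebraic rearrangement with no delicate estimates, so I expect no serious obstacle.
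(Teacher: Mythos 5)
Your proof is correct: the case split on $k \in \K^i$ and the projection variational inequality $(w-v)^T(z-w) \geq 0$ with test point $z = \x{i}{i}{k}$ is exactly the standard argument, and your feasibility remark about $\x{i}{i}{k} \in \X_i$ is the right (and only) point of care. The paper itself proves this lemma purely by citation to~\cite[Sec.~7.5.4, Lemma~5.1]{bertsekas1989parallel}, and that cited result is established by precisely this argument, so you have simply made the paper's appeal to the literature self-contained.
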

\noindent \emph{Proof: } Follows from~\cite[Sec. 7.5.4, Lemma 5.1]{bertsekas1989parallel}. 
\hfill $\blacksquare$

\begin{lemma} \label{lem:outOfDate}
For all~$\tz \in \Ts$, for all~$i \in [N]$, and for all~$k \geq 0$, we have~$\| \x{i}{}{k} - \x{}{}{k} \| \leq \sum^{k-1}_{\tau = k - B} \|\s{}{\tau}{\ell}\|$. 
\end{lemma}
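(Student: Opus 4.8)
The plan is to expand the difference $\x{i}{}{k} - \x{}{}{k}$ block by block using the definitions in~\eqref{eq:1000} and~\eqref{eq:1001}, telescope each off-diagonal block into a sum of single-step updates $\s{j}{\tau}{\ell}$, and then bound the resulting sum with the triangle inequality together with the delay bound from Assumption~\ref{ass:partialAsynch}.

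First I would compare the two vectors componentwise. The $i$-th block of both $\x{i}{}{k}$ and $\x{}{}{k}$ equals $\x{i}{i}{k}$, so that block of the difference vanishes. For each $j \neq i$, the $j$-th block of the difference is $\x{j}{j}{\tau^i_j(k)} - \x{j}{j}{k}$, i.e.\ the gap between the stale copy that agent $i$ holds and the current value computed by agent $j$. Next I would telescope each such block by writing $\x{j}{j}{k} - \x{j}{j}{\tau^i_j(k)} = \sum_{\tau = \tau^i_j(k)}^{k-1} \big( \x{j}{j}{\tau+1} - \x{j}{j}{\tau} \big)$ and invoking the definition~\eqref{eq:1005}: each summand equals $\s{j}{\tau}{\ell}$, which holds in both cases of~\eqref{eq:1005}, since on timesteps $\tau \notin \K^j$ agent $j$'s block is unchanged and $\s{j}{\tau}{\ell} = 0$. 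Assumption~\ref{ass:partialAsynch} then gives $\tau^i_j(k) \geq \max\{0, k - B + 1\}$, so the index range $\{\tau^i_j(k), \ldots, k-1\}$ is contained in $\{k-B, \ldots, k-1\}$.

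Finally, to assemble a bound stated in terms of the full update vectors $\s{}{\tau}{\ell}$, I would introduce for each $\tau \in \{k-B, \ldots, k-1\}$ a masked vector $w_\tau$ whose $j$-th block equals $\s{j}{\tau}{\ell}$ when $j \neq i$ and $\tau \geq \tau^i_j(k)$, and is zero otherwise. By construction $\x{}{}{k} - \x{i}{}{k} = \sum_{\tau=k-B}^{k-1} w_\tau$, and since $w_\tau$ is obtained from $\s{}{\tau}{\ell}$ by zeroing out some blocks, $\|w_\tau\| \leq \|\s{}{\tau}{\ell}\|$. The triangle inequality then yields $\|\x{i}{}{k} - \x{}{}{k}\| \leq \sum_{\tau=k-B}^{k-1} \|w_\tau\| \leq \sum_{\tau=k-B}^{k-1} \|\s{}{\tau}{\ell}\|$, which is the claim.

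The main obstacle to keep clean is the bookkeeping: each block $j$ carries its own delay $\tau^i_j(k)$, so the telescoping ranges differ across blocks, and the $i$-th block contributes nothing at all. The masking device is what lets these per-block partial sums — which live in different coordinate subspaces and run over different index sets — be recombined into a single sum of full-dimensional vectors whose norms are controlled by $\|\s{}{\tau}{\ell}\|$. Without it one would naturally arrive at the looser bound $\sum_{\tau}\sum_j \|\s{j}{\tau}{\ell}\|$ rather than the Euclidean update norms $\|\s{}{\tau}{\ell}\|$ that the statement requires.
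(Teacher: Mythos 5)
Your proof is correct and is essentially the argument this paper relies on: the paper gives no inline proof, deferring instead to~\cite[Section 7.5.1]{bertsekas1989parallel}, and your blockwise comparison, telescoping of each stale block over $\{\tau^i_j(k),\dots,k-1\}$, and use of the delay bound from Assumption~\ref{ass:partialAsynch} is precisely the standard argument found there. In particular, you correctly handle the one genuinely delicate point---recombining the per-block partial sums, which run over different index ranges, into full-dimensional masked vectors $w_\tau$ satisfying $\|w_\tau\| \leq \|s(\tau)\|$---which is exactly what yields the bound in terms of the Euclidean update norms $\|s(\tau)\|$ rather than the looser double sum $\sum_{\tau}\sum_j \|s_j(\tau)\|$.
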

\noindent \emph{Proof: } See~\cite[Section 7.5.1]{bertsekas1989parallel}. \hfill $\blacksquare$

\begin{lemma} \label{lem:update}
Define~$\chi(\tz)=2\gammaz  \Lx^2 B N$.
For each~$\tz \in \Ts$,~$\gammaz \in (0,1)$, and for all~$ k \in \{\eta_{z-1},\dots, \eta_{z}\}$,
\begin{equation} 
   \| \x{}{}{k+1} - \x{}{}{k} \|^2 \leq 
   \frac{ 1 + \chi(\tz) }{1-\gammaz }  \sum^{k+B-1}_{\tau=k} \| \s{}{\tau}{\ell} \|^2 
   + \frac{\chi(\tz)}{1-\gammaz}  \sum_{\tau=k-B}^{k-1} \| \s{}{\tau}{\ell}  \| ^2.
\end{equation}
\end{lemma}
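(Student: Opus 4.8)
My starting point is the exact relation between the true-state increment and the aggregate update vector. From the computation branch of the update law and the definition of $\s{i}{k}{\ell}$ in~\eqref{eq:1005}, each block satisfies $\x{i}{i}{k+1}-\x{i}{i}{k}=\s{i}{k}{\ell}$ (both when $k\in\K^i$ and when $k\notin\K^i$), so stacking the blocks as in~\eqref{eq:1001} gives $\x{}{}{k+1}-\x{}{}{k}=\s{}{k}{\ell}$, hence $\|\x{}{}{k+1}-\x{}{}{k}\|^2=\sum_{i\in[N]}\|\s{i}{k}{\ell}\|^2$. Since $\|\s{}{k}{\ell}\|^2$ is exactly the $\tau=k$ summand of the forward sum, $\tfrac{1+\chi(\tz)}{1-\gammaz}\ge 1$ for $\gammaz\in(0,1)$, and every remaining summand on the right is non-negative, correctness of the inequality is immediate; the real task is to exhibit $\|\s{}{k}{\ell}\|^2$ in the stated windowed form, whose constants are what the downstream descent estimate consumes.

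To that end I would not stop at the bare identity but compare each block step to the step agent $i$ would have taken from the \emph{true} state rather than from its stale copy $\x{i}{}{k}$. Writing $\hat s_i(k)=\Pi_{\X_i}[\x{i}{i}{k}-\gammaz\nabla_{x_i}\g{\x{}{}{k}}{\tz}]-\x{i}{i}{k}$, nonexpansiveness of $\Pi_{\X_i}$ and the gradient-Lipschitz constant $\Lx$ from Section~\ref{ss:p2objectives} give $\|\s{i}{k}{\ell}-\hat s_i(k)\|\le\gammaz\Lx\|\x{i}{}{k}-\x{}{}{k}\|$, and Lemma~\ref{lem:outOfDate} bounds the staleness by $\sum_{\tau=k-B}^{k-1}\|\s{}{\tau}{\ell}\|$. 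Squaring with Cauchy--Schwarz across the $B$ delayed terms and summing over the $N$ agents produces the backward sum and a coefficient of order $\gammaz\Lx^2 BN$ --- the ingredients of $\chi(\tz)=2\gammaz\Lx^2 BN$. The leftover true-state piece $\sum_i\|\hat s_i(k)\|^2$ I would charge to the forward window: partial asynchrony (Assumption~\ref{ass:partialAsynch}(1)) guarantees each agent computes within $\{k,\dots,k+B-1\}$, so the true-state step is accounted for by the realized updates $\s{}{\tau}{\ell}$ over that window.

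The device that assembles these pieces is Young's inequality $\|u+v\|^2\le(1+\epsilon)\|u\|^2+(1+\epsilon^{-1})\|v\|^2$ with $\epsilon=\gammaz/(1-\gammaz)$, for which $1+\epsilon=(1-\gammaz)^{-1}$; this creates the common denominator $1-\gammaz$ and explains why $\gammaz\in(0,1)$ is needed to keep it positive. I expect the main obstacle to be the constant bookkeeping across the two overlapping length-$B$ windows $\{k-B,\dots,k-1\}$ and $\{k,\dots,k+B-1\}$: keeping the staleness terms separate from the forward updates, applying Cauchy--Schwarz to the $B$-term sums without double counting, and tuning $\epsilon$ so that the delay contributions collapse to exactly the coefficients $\tfrac{1+\chi(\tz)}{1-\gammaz}$ and $\tfrac{\chi(\tz)}{1-\gammaz}$ --- including the factor of $2$ and the placement of $(1-\gammaz)^{-1}$ on both sums --- rather than to a looser constant. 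Because the crude identity already certifies a valid bound, the entire difficulty is in matching these coefficients.
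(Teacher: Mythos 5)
Your opening observation is correct, and it exposes a defect in the paper rather than in your argument: by the computation branch of~\eqref{eq:update} and the definition~\eqref{eq:1005}, every block satisfies $x_i^i(k+1)-x_i^i(k)=s_i(k)$ whether or not $k\in\K^i$, so by~\eqref{eq:1001} the left-hand side of the lemma is identically $\|s(k)\|^2$, which appears on the right-hand side with a coefficient exceeding one; the stated inequality is vacuous, and your one-line argument proves it. This is nothing like the paper's proof, which is a citation of Tseng's Lemma~4.2, and the mismatch reveals that the lemma is mis-stated. Tseng's lemma bounds the deviation of the true state from a full \emph{synchronous} projected-gradient step, $\bar z(k)=\Pi_{\X}\big[x(k)-\gamma_z\nabla_x g(x(k);t_z)\big]$, and that is also the only quantity to which Lemma~\ref{lem:update} is applied in this paper: in the proof of Lemma~\ref{lem:together} it is used on the term $\|\bar z(k)-x(k)\|^2$ in~\eqref{eq:24}, which is exactly how the prefactors $\tfrac{1}{2\gamma_z^2-2\gamma_z^3}$ and $\tfrac{1}{2\gamma_z-2\gamma_z^2}$ and the $\chi$-proportional terms inside $A_2$ and $A_3$ arise. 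A bound on $\|s(k)\|^2$ by a window that contains $\|s(k)\|^2$ has no content and could not be substituted there; the left-hand side should read $\|\bar z(k)-x(k)\|^2$, i.e., $\|x(k)-\Pi_{\X}[x(k)-\gamma_z\nabla_x g(x(k);t_z)]\|^2$.

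Your second and third paragraphs show that you sensed this, and the ingredients you name are the right ones: nonexpansiveness of $\Pi_{\X_i}$ together with the Lipschitz constant $L_z$, the staleness bound of Lemma~\ref{lem:outOfDate}, Assumption~\ref{ass:partialAsynch} to place each agent's next update inside $\{k,\dots,k+B-1\}$, and Young's inequality with $1+\epsilon=(1-\gamma_z)^{-1}$. But the sketch halts at precisely the step that constitutes the lemma. ``Charging $\sum_i\|\hat s_i(k)\|^2$ to the forward window'' \emph{is} the entire difficulty: for the substantive statement the target is $\sum_i\|\hat s_i(k)\|^2=\|\bar z(k)-x(k)\|^2$, and for any agent with $k\notin\K^i$ your comparison $\|s_i(k)-\hat s_i(k)\|\le\gamma_z L_z\|x^i(k)-x(k)\|$ fails outright (there $s_i(k)=0$ while $\hat s_i(k)$ need not be small, and no projection identity is available at time $k$). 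One must instead compare $\hat s_i(k)$ with the realized step $s_i(k^i)$ at agent $i$'s first update time $k^i\in\K^i\cap\{k,\dots,k+B-1\}$, which brings in both the staleness $\|x^i(k^i)-x(k)\|$ and the drift of the iterates between $k$ and $k^i$, each controlled by sums of $\|s(\tau)\|$ over $\tau\in\{k-B,\dots,k+B-1\}$; assembling these with Young's inequality is what produces $\chi(t_z)=2\gamma_z L_z^2 BN$ and the asymmetric placement of the two windows. None of that is executed in the proposal --- it is described conditionally --- and your estimates are organized in the reverse direction (bounding the realized step by the ideal one plus error, whereas the lemma needs the ideal step bounded by realized ones). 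In short: the proposal correctly proves the lemma as printed, but relative to the result the paper actually cites and consumes downstream, there is a genuine gap.
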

\noindent \emph{Proof: } See~\cite[Lemma 4.2]{tseng1991rate}. \hfill $\blacksquare$

\begin{lemma} \label{lem:technical} 
For all~$\tz \in \Ts $ and any~$x \in \R^n$,~$\x{1}{}{},\dots,\x{N}{}{} \in \R^n$, and any~$\bar{x}\in\X$, there holds
\begin{multline} 
    \g{z}{\tz}-\g{\bar{x}}{\tz} \leq 
      \frac{\Lx^2}{2} \Big( N (7 \Lx^2  \gammaz^2 + 6 \Lx  \gammaz + 3 )
      + 3  \big) \sum^N_{i=1} \| x - \x{i}{}{} \|^{2} + \frac{N}{2} \big( 3\Lx^2 + 4  \Big) \|x - \bar{x} \|^{2} \ \\
      +\frac{1}{2\gammaz^2} \big( 3\Lx^2 \gammaz^2  + 4 \gammaz^2 + 6\Lx \gammaz 
      + 3 \big)  \| \bar{z} - x \|^{2}, 
\end{multline}
where~$\bar{z}\coloneqq \Pi_{\X}\big[x - \gammaz \nabla_x \g{x}{\tz} \big]$ and~$z$ is a vector with components~$z_i \coloneqq \Pi_{\X_i} \big[ \x{}{i}{} - \gammaz \nabla_{\x{}{i}{}} \g{\x{i}{}{}}{\tz} \big]$ for all~$i \in [N]$.
\end{lemma}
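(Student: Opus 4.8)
The plan is to prove this decomposition bound by exploiting the $L_x$-Lipschitz continuity of $\nabla_x g(\cdot; \tz)$ (equivalently, the fact that $\nabla^2_x g(\cdot; \tz) \preceq L_x I_n$ established in Section~\ref{ss:p2objectives}) together with the nonexpansiveness of the Euclidean projection onto the convex set $\X$. The core idea is that all three terms on the right-hand side measure distances that can be controlled: the gaps $\|x - \x{i}{}{}\|$ between the common point $x$ and each agent's local copy, the gap $\|x - \bar{x}\|$ to the arbitrary comparison point, and the update magnitude $\|\bar{z} - x\|$. First I would use the descent-type inequality from Lemma~\ref{lem:descent2} (with constant $K = L_x$) to write $g(z; \tz) \leq g(x; \tz) + (z - x)^T \nabla_x g(x; \tz) + \frac{L_x}{2}\|z - x\|^2$, and similarly relate $g(x;\tz)$ to $g(\bar{x};\tz)$, so that the left-hand difference $g(z;\tz) - g(\bar{x};\tz)$ is expressed through inner products and quadratic remainders.

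Next I would handle the fact that $z$ is built block-by-block from the \emph{outdated} vectors $\x{i}{}{}$ rather than from $x$ itself, whereas $\bar{z}$ is the projected gradient step taken directly from $x$. The plan is to insert and subtract $\bar z$ and to compare the two projection operations: since $\Pi_{\X}$ is firmly nonexpansive, $\|z - \bar{z}\|$ can be bounded by the discrepancy between the gradients $\nabla_{\x{}{i}{}} g(\x{i}{}{}; \tz)$ and $\nabla_{\x{}{i}{}} g(x; \tz)$, which in turn is at most $L_x \|x - \x{i}{}{}\|$ per block by Lipschitzness. This is where the factors of $L_x^2$, $L_x$, and the summation over $i \in [N]$ will enter, and it explains the appearance of $\gammaz$ and $\gammaz^2$ weights: the step size multiplies each gradient term, so expanding $\|z - \bar{z}\|^2$ and the cross terms produces exactly the polynomial $7L_x^2 \gammaz^2 + 6 L_x \gammaz + 3$ structure shown in the statement.

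The remaining work is bookkeeping: I would repeatedly apply Young's inequality (in the form $2u^Tv \leq \|u\|^2 + \|v\|^2$, or weighted versions) to convert every cross term $(z-x)^T \nabla_x g(x;\tz)$ and $(\bar z - x)^T(\cdots)$ into pure squared norms, then collect all contributions according to which of the three base quantities $\sum_i \|x - \x{i}{}{}\|^2$, $\|x - \bar x\|^2$, and $\|\bar z - x\|^2$ they belong to. The standard projected-gradient inequality relating $\bar z - x$ to the gradient (the variational characterization $\langle x - \gammaz \nabla_x g(x;\tz) - \bar z, \bar x - \bar z\rangle \leq 0$ for $\bar x \in \X$) will be used to trade the inner product $(\bar z - x)^T \nabla_x g(x;\tz)$ for $-\frac{1}{\gammaz}\|\bar z - x\|^2$-type terms, which is why the final coefficient on $\|\bar z - x\|^2$ carries a $\frac{1}{2\gammaz^2}$ prefactor.

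The main obstacle will be the careful tracking of constants through the chain of Young's-inequality splittings so that the final coefficients match the stated polynomials exactly rather than merely being of the same order. In particular, each cross term admits a continuum of weighted splittings, and the precise weights must be chosen so that the $L_x$-degree and $\gammaz$-degree of every resulting term line up with the target expression; a single mismatched weight would alter the coefficients $7, 6, 3$ or the factor $3L_x^2 + 4$. I expect the block-structure argument --- showing that the per-block projection $z_i$ assembled from the $\x{i}{}{}$ differs from the monolithic projection $\bar z$ by a quantity governed entirely by $\sum_i \|x - \x{i}{}{}\|$ --- to be the conceptually delicate step, since it couples the asynchrony (different agents holding different iterates) to the geometry of projecting onto the product set $\X = \X_1 \times \cdots \times \X_N$, which is where Assumption~\ref{ass:setConstraint} is essential.
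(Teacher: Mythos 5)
Your proposal is correct in approach and is, in substance, the same proof as the paper's: the paper's own proof of this lemma is a one-line citation to \cite[Lemma 4.3]{tseng1991rate}, and the argument behind that cited result is precisely the one you outline --- the descent lemma, blockwise comparison of $z$ with $\bar{z}$ using nonexpansiveness of the projection onto the product set $\X_1 \times \cdots \times \X_N$ (this is where Assumption~\ref{ass:setConstraint} enters, as you note) together with Lipschitz continuity of $\nabla_x \g{\cdot}{\tz}$, the variational characterization of the projection to absorb the gradient inner products, and Young-inequality bookkeeping. The only caveat, which you yourself flag, is that the specific coefficients in the statement are the artifact of one particular sequence of weighted splittings, so reproducing them exactly is a matter of following that bookkeeping rather than any additional idea.
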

\noindent \emph{Proof: } See~\cite[Lemma 4.3]{tseng1991rate}. \hfill $\blacksquare$

\begin{lemma} \label{lem:together}
For all~$ \tz \in \Ts$ and~$k \in \{\eta_{z-1} + \kz,\dots,\eta_{z}-B \}$,
\begin{equation}
        \g{\x{}{}{k+B}}{\tz} - \g{\x{*}{k}{\tz}}{\tz} \leq \big( A_1  + A_2  
        + 1 \big) \sum^{k+B-1}_{\tau =k} \| \s{}{\tau}{\ell} \|^2
       + \big( A_1 + A_3 +A_4 \big)  \sum^{k-1}_{\tau =k-B} \| \s{}{\tau}{\ell} \|^2,
\end{equation}
where~$g(\x{*}{k}{\tz}; \tz)$ is 
the same for all~$k$ 
due to Lemma~\ref{lem:kHat}, 
and
\begin{align}
    A_1 &= \frac{N \Lx^2 }{2} \big( 7 \Lx^2 N \gammaz^2 + 6 \Lx N \gammaz + 3 N + 3  \big) \\
    A_2 &= \frac{ 1 }{2\gammaz^2 - 2\gammaz^3} \Big(  B N \big( 6 \Lx^4  + 8 \Lx^2  \big)  \gammaz^3 
    + \big( 12 \Lx^3 B N + 3\Lx^2 + 4  \big) \gammaz^2 \\  
    &\qquad\qquad\qquad+ \big( B \Lx^2 ( 6  N +  N^2  \lambda_z^2 (6 \Lx^2   +8 ) ) + 6\Lx \big) \gammaz + 3  + N \lambda_z^2 \big( 3\Lx^{2} + 4  \big) \Big) \\
    A_3 &= \frac{ 1 }{2\gammaz - 2\gammaz^2} B N \Lx^2 \Big( \big( 6 \Lx^2  + 8 \big) \gammaz^2  +    12 \Lx   \gammaz  + 6  + N \lambda_z^2 (6 \Lx^2   + 8 )  \Big) \\
    A_4 &= \frac{\Lx B N}{2}.
\end{align}
\end{lemma}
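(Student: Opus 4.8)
The plan is to derive the two-window bound by specializing the pointwise descent inequality of Lemma~\ref{lem:technical} to the sequence generated by Algorithm~\ref{alg:myAlg}, and then to express every resulting error quantity as a sum of squared steps $\|\s{}{\tau}{\ell}\|^2$ over the forward window $\{k,\dots,k+B-1\}$ and the backward window $\{k-B,\dots,k-1\}$. Concretely, I would apply Lemma~\ref{lem:technical} at the true state $x = \x{}{}{k}$, with the agents' onboard copies $\x{i}{}{} = \x{i}{}{k}$ and with $\bar{x} = \x{*}{k}{\tz}$. With this choice the induced block iterate $z_i = \Pi_{\X_i}[\x{i}{i}{k} - \gammaz \nabla_{x_i}\g{\x{i}{}{k}}{\tz}] = \Omega_i(\x{i}{}{k})$ is exactly the gradient-projection update each agent would apply at iteration $k$. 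Since Assumption~\ref{ass:partialAsynch} guarantees that every block is recomputed at least once during any $B$ consecutive iterations, the true iterate $\x{}{}{k+B}$ realizes at least one such full sweep; combined with Lemma~\ref{lem:kHat}, which certifies that $\g{\x{*}{k}{\tz}}{\tz}$ is constant in $k$ over the stated range, this lets me identify the left-hand side $\g{z}{\tz} - \g{\bar{x}}{\tz}$ of Lemma~\ref{lem:technical} with $\g{\x{}{}{k+B}}{\tz} - \g{\x{*}{k}{\tz}}{\tz}$, up to per-step smoothness residuals controlled through Lemmas~\ref{lem:descent2} and~\ref{lem:descent}.

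I would then bound the three terms on the right-hand side of Lemma~\ref{lem:technical} in turn. The consensus term $\sum_i \|\x{}{}{k} - \x{i}{}{k}\|^2$ is handled by Lemma~\ref{lem:outOfDate} together with Cauchy--Schwarz, and produces the coefficient $A_1$. The distance-to-optimum term $\|\x{}{}{k} - \x{*}{k}{\tz}\|^2$ is where the nonconvex analysis enters: I would invoke the Error Bound (Lemma~\ref{lem:errorBound1}) to replace it by $\lambda_z^2$ times the squared unit-step gradient mapping $\|\x{}{}{k} - \Pi_\X[\x{}{}{k} - \nabla_x\g{\x{}{}{k}}{\tz}]\|^2$, and then use Lemma~\ref{lem:noGammaUpdate} to pass to the $\gammaz$-scaled mapping; this is the origin of the $\lambda_z^2$ factors appearing in $A_2$ and $A_3$. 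Finally, both the converted second term and the native gradient-mapping term $\|\bar{z} - \x{}{}{k}\|^2$ are fed into Lemma~\ref{lem:update}, whose coefficients $\tfrac{1+\chi(\tz)}{1-\gammaz}$ and $\tfrac{\chi(\tz)}{1-\gammaz}$ split each displacement between the forward and backward windows; note that $\tfrac{1}{2\gammaz^2(1-\gammaz)}$ in $A_2$ is precisely the $\tfrac{1}{2\gammaz^2}$ prefactor from Lemma~\ref{lem:technical} multiplied by the $\tfrac{1}{1-\gammaz}$ of Lemma~\ref{lem:update}.

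Collecting terms, the forward-window contributions assemble into the coefficient $A_1 + A_2 + 1$, with the trailing $+1$ absorbing the residual first-order smoothness terms generated when passing from the synchronous step $z$ to the genuine iterate $\x{}{}{k+B}$, and the backward-window contributions assemble into $A_1 + A_3 + A_4$, with $A_4 = \tfrac{\Lx B N}{2}$ arising from the corresponding cross-terms. Verifying that the polynomial-in-$\gammaz$ factors defining $A_2$ and $A_3$ coincide with the products of the Lemma~\ref{lem:technical} coefficients and the two Lemma~\ref{lem:update} coefficients, and that the $\lambda_z^2$ contributions land with the stated constants, closes the argument.

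The main obstacle is the identification in the first step: rigorously comparing the true asynchronous iterate $\x{}{}{k+B}$ with the single synchronous gradient-projection step $z$ of Lemma~\ref{lem:technical}, given that the two differ both because agents compute at different iterations and because they use delayed, mutually inconsistent copies $\x{i}{}{k}$. This is exactly where Assumption~\ref{ass:partialAsynch} and Lemma~\ref{lem:kHat} must be used together, and it is what forces the backward window $\{k-B,\dots,k-1\}$ to appear alongside the forward one. A secondary but laborious difficulty is the bookkeeping: one must track which $\|\s{}{\tau}{\ell}\|^2$ terms are assigned to which window through the nested use of Lemmas~\ref{lem:outOfDate}, \ref{lem:update}, \ref{lem:errorBound1}, and~\ref{lem:noGammaUpdate} so that the four constants $A_1,\dots,A_4$ emerge exactly as stated.
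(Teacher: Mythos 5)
Your proposal assembles the right toolkit (Lemmas~\ref{lem:technical}, \ref{lem:errorBound1}, \ref{lem:noGammaUpdate}, \ref{lem:outOfDate}, \ref{lem:update}, \ref{lem:descent2}, \ref{lem:descent}), and your treatment of the distance-to-optimum term via the error bound plus Lemma~\ref{lem:noGammaUpdate} matches the paper. But there is a genuine gap at the step you yourself flag as the ``main obstacle,'' and your choice of evaluation points makes it unresolvable by the means you cite. You apply Lemma~\ref{lem:technical} with the onboard copies frozen at time~$k$, so that $z_i = \Omega_i(\x{i}{}{k})$ is a hypothetical synchronous sweep that no agent actually performs; you then must pass from $\g{z}{\tz}$ to $\g{\x{}{}{k+B}}{\tz}$, which you describe as holding ``up to per-step smoothness residuals controlled through Lemmas~\ref{lem:descent2} and~\ref{lem:descent}.'' That bridge does not go through: with your choice, $\x{}{}{k+B} - z$ contains differences of the form $\Omega_i(\x{i}{}{k^i}) - \Omega_i(\x{i}{}{k})$, where $k^i$ is agent~$i$'s first actual update time after~$k$, and these differences are not algorithm steps, so Lemma~\ref{lem:descent} --- which supplies the indispensable negative terms $-\gammaz^{-1}\|\s{i}{\tau}{\ell}\|^2$ that cancel the first-order expansion terms --- does not apply to them. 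Without that cancellation you are left with terms that are first order in the step norms, which cannot be absorbed into the stated quadratic windows. Lemma~\ref{lem:kHat} cannot repair this either; it only guarantees that the reference value $\g{\x{*}{k}{\tz}}{\tz}$ is the same for all~$k$, and says nothing about relating $z$ to $\x{}{}{k+B}$.

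The missing idea is the paper's choice of~$z$: for each~$i$, let $k^i$ be the smallest element of~$\K^i$ exceeding~$k$ and set $z_i = \x{i}{i}{k^i+1} = \Pi_{\X_i}\big[\x{i}{i}{k} - \gammaz \nabla_{\x{}{i}{}}\g{\x{i}{}{k^i}}{\tz}\big]$, i.e., apply Lemma~\ref{lem:technical} with $x = \x{}{}{k}$ but with the onboard copies taken at the \emph{actual} computation times~$k^i$, not at~$k$. Then $\x{}{}{k+B} = z + v$ holds exactly, where $v$ stacks the sums $\sum_{\tau=k^i+1}^{k+B-1}\s{i}{\tau}{\ell}$ of genuine steps, each of which satisfies Lemma~\ref{lem:descent}, and the smoothness expansion of $\g{z+v}{\tz}$ closes after imposing $\gammaz < 2/(\Lx B(3N+1))$; the residual $\sum_i \|\s{i}{k^i}{\ell}\|^2$ is what produces the trailing $+1$. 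Your bookkeeping also betrays the problem: with copies frozen at~$k$, Lemma~\ref{lem:outOfDate} places the consensus term entirely in the backward window $\{k-B,\dots,k-1\}$, so $A_1$ could never multiply the forward sum $\sum_{\tau=k}^{k+B-1}\|\s{}{\tau}{\ell}\|^2$. In the statement $A_1$ appears in \emph{both} coefficients precisely because the paper evaluates the consensus error at~$k^i$, where Assumption~\ref{ass:partialAsynch} gives $k-B \leq \tau_j^i(k^i) \leq k+B-1$, spanning both windows.
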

\noindent \emph{Proof.} Fix any~$k \in \{\eta_{z-1} + \kz,\dots,\eta_{z}-B \}$. For each~$i$, let~$k^i$ denote the smallest element of~$\K^i$ exceeding~$k$. Then we can write the equation~$\x{i}{i}{k^i+1} = \Pi_{\X_i} \big[ \x{i}{i}{k} - \gammaz \nabla_{\x{}{i}{}} \g{\x{i}{}{k^i}}{\tz}  \big]$.
Next we apply Lemma~\ref{lem:technical} with~$x=\x{}{}{k}$,~$x^i=\x{i}{}{k^i}$ for each~$i \in [N]$,~$\bar{z}(k)=\Pi_{\X} \big[ \x{}{}{k} - \gammaz \nabla_{x} \g{\x{}{}{k}}{\tz}  \big]$, and~$\bar{x}=\x{*}{k}{\tz}$
for any~$\x{*}{k}{\tz} \in \argmin_{x^* \in \X^*(\tz)} \|x(k) - x^*\|$. 
Then
\begin{multline} 
    \g{z}{\tz}-\g{\x{*}{k}{\tz}}{\tz} \leq  \frac{\Lx^2}{2} \big( N ( 7 \Lx^2  \gammaz^2 + 6 \Lx  \gammaz + 3  )  + 3  \big) 
    \sum^N_{i=1} \| \x{}{}{k} - \x{i}{}{k^i} \|^{2} \\ + \frac{1}{2\gammaz^2} \big( (3\Lx^2 
      + 4) \gammaz^2 + 6\Lx \gammaz 
      + 3 \big)  \| \bar{z}(k) - \x{}{}{k} \| ^{2} 
      + \frac{N}{2} \big( 3\Lx^2  
      + 4  \big) \| \x{*}{k}{\tz} - \x{}{}{k} \|^{2},
\end{multline}
where $z \in \R^n$ with~$z_i = \x{i}{i}{k^i+1}$ for all~$i \in [N]$.
Applying Lemma~\ref{lem:noGammaUpdate} to Lemma~\ref{lem:errorBound1} yields
\begin{equation} \label{eq:78}
    \min_{\x{*}{}{} \in \X^*(\tz)} \| \x{}{}{k} - \x{*}{}{} \| 
    \leq  \lambda_z \max\{ 1,\gammaz^{-1} \} \| \x{}{}{k} - \Pi_\X \big[ \x{}{}{k} -  \gammaz \nabla_x \g{\x{}{}{k}}{\tz} \big] \|. 
\end{equation}
Next, applying~\eqref{eq:78}, using~$0 < \gammaz < 1$, and simplifying gives
\begin{multline} \label{eq:23}
    \g{z}{\tz}-\g{\x{*}{k}{\tz}}{\tz} \leq 
      \frac{\Lx^2}{2} \big( N (7 \Lx^2 \gammaz^2 + 6 \Lx \gammaz + 3 ) + 3  \big) \sum^N_{i=1} \| \x{}{}{k} - \x{i}{}{k^i} \|^{2} \\
      + \frac{ 1 }{2\gammaz^2} \Big(   (3\Lx^2 + 4) \gammaz^2 
       + 6\Lx \gammaz + 3 + N \lambda_z^2 \big( 3\Lx^2  + 4  \big) \Big)   \|   \bar{z}(k) - \x{}{}{k} \|^2  .
\end{multline}
We note that~$\| \x{}{}{k} - \x{i}{}{k^i} \|^2 = \sum^N_{j=1} \  \|\x{}{j}{k} - \x{i}{j}{k^i}\|^2$. Assumption~\ref{ass:partialAsynch} gives us~$k-B \leq \tau^i_j (k^i) \leq k + B - 1$ for all~$i,j \in [N]$. Using these facts and~\eqref{eq:1000}, we reach
\begin{multline} 
    \g{z}{\tz}-\g{\x{*}{k}{\tz}}{\tz} \leq 
      \frac{\Lx^2}{2} \big( N( 7 \Lx^2 \gammaz^2 + 6 \Lx  \gammaz + 3  )+ 3  \big) \sum^{k+B-1}_{\tau =k-B} \| \s{}{\tau}{\ell} \|^2 \\
      + \frac{ 1 }{2\gammaz^2} \Big(   (3\Lx^2 + 4 )\gammaz^2 + 6\Lx \gammaz + 3 + \lambda_z^2 \big( 3\Lx^{2} N + 4 N \big) \Big)   \|   \bar{z}(k) - \x{}{}{k} \|^2  . \label{eq:19}
\end{multline}
We also know that~$\x{}{i}{k+B} - \x{}{i}{k^i+1} = \sum^{k+B-1}_{\tau = k^i+1} \s{i}{\tau}{\ell}$. 
By definition of~$z_i=\x{}{i}{k^i+1}$ we have~$\x{}{}{k+B} = z + v$, where we use~$v \coloneqq \big(
        \sum^{k+B-1}_{\tau=k^1 + 1} \s{1}{\tau}{\ell}^T 
        \hdots 
        \sum^{k+B-1}_{\tau=k^N + 1} \s{N}{\tau}{\ell}^T
    \big)^T$.
Then~$\g{\x{}{}{k+B}}{\tz} = \g{z + v}{\tz}$. 
Using Lemma~\ref{lem:descent2}, 
adding~$\nabla_{\x{}{i}{}} \g{\x{i}{}{\tau}}{\tz} - \nabla_{\x{}{i}{}} \g{\x{i}{}{\tau}}{\tz}$, 
and using the Lipschitz property of~$\nabla_{\x{}{}{}} \g{\cdot}{\tz}$ from Section~\ref{ss:p2objectives}, we find
\begin{multline}
    \g{\x{}{}{k+B}}{\tz} \! \leq \! \g{z}{\tz}  
    + \Lx \! \sum^N_{i=1} \! \sum^{k+B-1}_{\tau = k^i + 1} \! \| \s{i}{\tau}{\ell} \| \! \| z - \x{i}{}{\tau} \| \\
    + \sum^N_{i=1} \sum^{k+B-1}_{\tau = k^i + 1} \s{i}{\tau}{\ell}^T \nabla_{\x{}{i}{}} \g{\x{i}{}{\tau}}{\tz}     
    + \frac{\Lx}{2} \| v \|^2 
    . \label{eq:15}
\end{multline}
By partial asynchrony we find that $\| z - \x{i}{}{\tau} \| \leq \sum^N_{j=1} \sum^{k^j}_{\zeta=\tau^i_j(\tau)} \| \s{j}{\zeta}{\ell} \|$. Using this in~\eqref{eq:15}, along with
${\|\s{i}{\tau}{\ell}\| \cdot \|\s{j}{\zeta}{\ell}\|  \leq  \frac{1}{2} \big( \|\s{i}{\tau}{\ell}\|^2 + \|\s{j}{\zeta}{\ell}}\|^2 \big)$, the fact that $k \leq k^i + 1$ for~$i \in [N]$ and
the fact that~$k \leq \tau$, we have
\begin{multline}
    \!\!\!\!\!\g{\x{}{}{k+B}}{\tz} \leq \g{z}{\tz} +  \Lx B N \sum^{k+B-1}_{\tau = k} \|\s{}{\tau}{\ell} \|^2 + \frac{\Lx}{2} \| v \|^2 \\
    + \sum^N_{i=1} \sum^{k+B-1}_{\tau = k^i + 1} \s{i}{\tau}{\ell}^T \nabla_{\x{}{i}{}} \g{\x{i}{}{\tau}}{\tz}  
    + \frac{\Lx B N}{2} \sum^{k+B-1}_{\zeta = k - B}  \|\s{}{\zeta}{\ell}\|^2, 
    \label{eq:16}
\end{multline}
where we have also used~$\sum^N_{i=1} \|\s{i}{k}{\ell}\|^2 = \|\s{}{k}{\ell}\|^2$.
By definition of~$v$ we have~$\| v \|^2 = \sum^N_{j=1} \big\Vert \sum^{k+B-1}_{\tau = k^j+1} \s{j}{\tau}{\ell} \big\Vert^2$.
Using the triangle inequality and~$ \Big( \sum^N_{i=1} z_i \Big)^2 \leq N \sum^N_{i=1} z_i^2$ gives $\big\Vert v \big\Vert^2 \leq \sum^N_{j=1}  \Big( \big(k+B -1 \big) - \big( k^j+1 \big) + 1 \Big)  \sum^{k+B-1}_{\tau = k^j+1} \big\Vert \s{j}{\tau}{\ell} \big\Vert^2$.
By partial asynchrony, for each~$j \in [N]$, we have the inequalities $\big(k+B -1 \big) - \big( k^j+1 \big) + 1 \leq B$ and~$k \leq k^j + 1$. Thus, $\| v \|^2 \leq B \sum^N_{j=1}   \sum^{k+B-1}_{\tau = k} \big\Vert \s{j}{\tau}{\ell} \big\Vert^2$.
Next, using~$\sum^N_{i=1} \|\s{i}{k}{\ell}\|^2 = \|\s{}{k}{\ell}\|^2$ gives~$\| v \|^2 \leq B   \sum^{k+B-1}_{\tau = k} \big\Vert \s{}{\tau}{\ell} \big\Vert^2$.
Using this and Lemma~\ref{lem:descent} in~\eqref{eq:16} gives
\begin{multline}
    \g{\x{}{}{k+B}}{\tz} - \g{z}{\tz} \leq \frac{\Lx B N}{2}  \sum^{k-1}_{\zeta = k - B}  \|\s{}{\zeta}{\ell}\|^2 \\
    - \Bigg[ \frac{1}{\gammaz} - \frac{ \Lx B (3 N + 1) }{2} \Bigg]
    \sum^N_{i=1} \sum^{k+B-1}_{\tau = k^i + 1} \| \s{i}{\tau}{\ell} \|^2 
    + \sum^N_{i=1} \| \s{i}{k^i}{\ell} \|^2. 
\end{multline}
Using~$\gammaz <  \frac{2}{\Lx B (3 N + 1)}$ lets us drop the second
term on the right-hand side, and then adding~\eqref{eq:19}
gives
\begin{multline}
    \g{\x{}{}{k+B}}{\tz} -\g{\x{*}{k}{\tz}}{\tz} \leq 
      \frac{\Lx^2}{2} \big( N (7 \Lx^2 \gammaz^2 + 6 \Lx \gammaz + 3  ) + 3  
        \big) \sum^{k+B-1}_{\tau =k-B} \| \s{}{\tau}{\ell} \|^2  \\
      + \frac{ 1 }{2\gammaz^2} \Big(   (3\Lx^2  
      + 4) \gammaz^2 
      + 6\Lx \gammaz  + 3 + N \lambda_z^2 \big( 3\Lx^{2}  
      + 4 \big) \Big)   \|   \bar{z}(k) - \x{}{}{k} \|^2 \\
      + \frac{\Lx B N}{2}  \sum^{k-1}_{\zeta = k - B}  \|\s{}{\zeta}{\ell}\|^2 
    + \sum^N_{i=1} \| \s{i}{k^i}{\ell} \|^2. \label{eq:24}
\end{multline}
Using~$k \leq k^i \leq k+B-1$ gives 
\begin{equation}
    \sum^N_{i=1} \| \s{i}{k^i}{\ell} \|^2 \leq \sum^N_{i=1}  \sum^{k+B-1}_{\tau=k} \| \s{i}{\tau}{\ell} \|^2 
    \leq \sum^{k+B-1}_{\tau=k} \| \s{}{\tau}{\ell} \|^2. \label{eq:20}
\end{equation}
Using Lemma~\ref{lem:update} and~\eqref{eq:20} in~\eqref{eq:24} completes the proof. 
\hfill $\blacksquare$

\begin{lemma} \label{lem:moreBound}
For each~$\tz \in \Ts$ there exists~$\gammaz < \frac{1}{2}$ such that for all~$k \in \{\eta_{z-1} + \kz,\dots,\eta_{z} - B \}  $, we have~$\at{k+B}{z} \leq \gammaz^{-2}F_z \bt{k+B} + \gammaz^{-1} G_z \bt{k}$,
where~$F_z =  Z_1 +  Z_2 + 1$ and~$G_z =  Z_1 + Z_3 + A_4$, where~$A_4$ is from Lemma~\ref{lem:together} and
\begin{align}
    &Z_1 = \frac{N \Lx^2}{2} \big( 7 \Lx^2 N + 6 \Lx N + 3 N + 3  \big) \\
    &Z_2 = \frac{ 3 }{2 }   \big(  BN ( 6 \Lx^4  + 12 \Lx^3  + 14 \Lx^2 )  + 3\Lx^2   + 6\Lx + 7  +  N \lambda_z^2 (6 \Lx^4 B N + 8 \Lx^2 B N  +  3\Lx^{2}  + 4  ) \big) \\
    &Z_3 = N B \Lx^2 \big( 9 \Lx^2  + 18 \Lx + 21 + N \lambda_z^2 (9 \Lx^2 + 12 )  \big) .
\end{align}
\end{lemma}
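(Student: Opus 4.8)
The plan is to derive the bound directly from Lemma~\ref{lem:together} by two reductions: first identifying $\at{k+B}{z}$ with the left-hand side bounded there, and then absorbing the $\gammaz$-dependent coefficients $A_1,A_2,A_3$ into the clean forms $\gammaz^{-2}F_z$ and $\gammaz^{-1}G_z$. I would fix $\tz \in \Ts$ and $k \in \{\eta_{z-1}+\kz,\dots,\eta_z-B\}$, so that both $k$ and $k+B$ lie in $\{\eta_{z-1}+\kz,\dots,\eta_z\}$. On this range the optimal cost $\g{\x{*}{\cdot}{\tz}}{\tz}$ is constant in its index (the property recorded in the statement of Lemma~\ref{lem:together}, which follows from Lemma~\ref{lem:kHat}), so $\g{\x{*}{k+B}{\tz}}{\tz}=\g{\x{*}{k}{\tz}}{\tz}$ and hence $\at{k+B}{z}=\g{\x{}{}{k+B}}{\tz}-\g{\x{*}{k}{\tz}}{\tz}$, which is exactly the quantity bounded in Lemma~\ref{lem:together}. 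Recognizing that $\sum_{\tau=k}^{k+B-1}\|\s{}{\tau}{\ell}\|^2=\bt{k+B}$ and $\sum_{\tau=k-B}^{k-1}\|\s{}{\tau}{\ell}\|^2=\bt{k}$, Lemma~\ref{lem:together} yields $\at{k+B}{z}\leq (A_1+A_2+1)\bt{k+B}+(A_1+A_3+A_4)\bt{k}$.

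It then remains to show $A_1+A_2+1\leq\gammaz^{-2}F_z$ and $A_1+A_3+A_4\leq\gammaz^{-1}G_z$. I would select $\gammaz<\min\{\tfrac12,\tfrac{2}{\Lx B(3N+1)}\}$, a nonempty interval that simultaneously makes Lemma~\ref{lem:together} applicable and enforces $\gammaz<\tfrac12$; this is the source of the existence claim. Three elementary facts drive the coefficient bounds: (i) $\gammaz<1$ gives $\gammaz^{j}\leq 1$ for every $j\geq 0$, so each polynomial in $\gammaz$ with nonnegative coefficients is bounded by the sum of those coefficients; (ii) $\gammaz<\tfrac12$ gives $\tfrac{1}{2(1-\gammaz)}<1$; and (iii) $\gammaz^{-1},\gammaz^{-2}\geq 1$.

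For the first coefficient, dropping the $\gammaz$-factors in $A_1$ via (i) gives $A_1\leq Z_1$. The term $A_2$ carries the prefactor $\tfrac{1}{2\gammaz^2(1-\gammaz)}$; factoring out $\gammaz^{-2}$ and applying (i)--(ii) bounds the remaining polynomial by the sum of its coefficients, which I would verify equals $\tfrac23 Z_2$, so $A_2\leq\gammaz^{-2}Z_2$ with room to spare (the $\tfrac32$ in $Z_2$ is precisely this slack). Combining with $1\leq\gammaz^{-2}$ via (iii) gives $A_1+A_2+1\leq\gammaz^{-2}(Z_1+Z_2+1)=\gammaz^{-2}F_z$. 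For the second coefficient, the same treatment of $A_3$'s prefactor $\tfrac{1}{2\gammaz(1-\gammaz)}$ reduces its bracket to the coefficient sum $6\Lx^2+12\Lx+14+N\lambda_z^2(6\Lx^2+8)$, which is dominated term by term by $9\Lx^2+18\Lx+21+N\lambda_z^2(9\Lx^2+12)$; hence $A_3\leq\gammaz^{-1}Z_3$. Together with $A_1\leq Z_1\leq\gammaz^{-1}Z_1$ and $A_4\leq\gammaz^{-1}A_4$, both from (iii), this gives $A_1+A_3+A_4\leq\gammaz^{-1}(Z_1+Z_3+A_4)=\gammaz^{-1}G_z$, completing the proof.

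The conceptually delicate step is the first reduction: correctly invoking the constant-optimal-cost property so that $\at{k+B}{z}$ coincides with the left-hand side of Lemma~\ref{lem:together}, since $\alpha$ otherwise compares against $\x{*}{k+B}{\tz}$ rather than $\x{*}{k}{\tz}$. Everything after that is careful but routine bookkeeping, and the only place demanding attention is verifying that the coefficient sums of the $A_2$ and $A_3$ brackets match (respectively, are dominated by) $\tfrac23 Z_2$ and $Z_3$, which is exactly what pins down the constants appearing in $F_z$ and $G_z$.
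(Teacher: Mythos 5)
Your proposal is correct and follows essentially the same route as the paper's proof: invoke Lemma~\ref{lem:together} (with the constant-optimal-cost identification from Lemma~\ref{lem:kHat} implicit there), then bound $A_1 \leq Z_1$, $A_2 \leq \gammaz^{-2}Z_2$, and $A_3 \leq \gammaz^{-1}Z_3$ using $\gammaz < \tfrac12$, and absorb the remaining terms via $\gammaz^{-1}, \gammaz^{-2} \geq 1$. The only cosmetic difference is that you bound the prefactors via $\tfrac{1}{2(1-\gammaz)} < 1$ while the paper uses $\tfrac{1}{1-\gammaz} < 1 + 2\gammaz$ together with the $\tfrac32$ slack built into $Z_2$ and $Z_3$; both yield the stated constants, and your explicit handling of the step-size condition $\gammaz < \tfrac{2}{\Lx B(3N+1)}$ needed for Lemma~\ref{lem:together} is if anything slightly more careful than the paper.
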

\noindent \emph{Proof.}
For~$k \in \{\eta_{z-1} + \hat{k}_z, \dots, \eta_{z} - B\}$ we have 
\begin{equation} 
    \at{k+B}{z}  \leq \big( A_1+A_2 + 1 \big) \bt{k+B} 
    + \big( A_1+A_3+A_4 \big) \bt{k} , \label{eq:28}
\end{equation}
where~$A_1,A_2,A_3,A_4>0$ are from Lemma~\ref{lem:together}.
Since~$\gammaz < 1$, 
\begin{equation} \label{eq:25}
    A_1 \leq \frac{N \Lx^2}{2} \big( 7 \Lx^2 N + 6 \Lx N + 3 N + 3  \big).
\end{equation}
Taking~$\gammaz < \frac{1}{2}$ we have~$\frac{1}{1-\gammaz} < 1+2 \gammaz$. 
Then
\begin{equation} \label{eq:26}
    A_2 \leq \gammaz^{-2} \frac{ 3 }{2  }   \big(  BN ( 6 \Lx^4  + 12 \Lx^3  + 14 \Lx^2 )  + 3\Lx^2   + 6\Lx + 7  +  N \lambda_z^2 (6 \Lx^4 B N + 8 \Lx^2 B N  +  3\Lx^{2}  + 4  ) \big) .
\end{equation}
Now we bound~$A_3$.
Using~$\gammaz < \frac{1}{2}$ we again have~$\frac{1}{1-\gammaz} < 1+2 \gammaz$. Applying this and the fact that~$\gammaz < 1$ yields
\begin{equation} \label{eq:27}
    A_3 \leq \gammaz^{-1} N B \Lx^2 \big( 9 \Lx^2 + 18 \Lx + 21 + N \lambda_z^2 (9 \Lx^2 + 12 ) \big).
\end{equation}
From~\eqref{eq:25}-\eqref{eq:27} we have~$A_1 \leq Z_1$,~$A_2 \leq \gammaz^{-2} Z_2$, and~$A_3 \leq \gammaz^{-1} Z_3$.
Using these in~\eqref{eq:28} with~$\gammaz < 1$ gives the result.
\hfill $\blacksquare$
\begin{lemma} \label{lem:alphaBound}
We have~$\at{\kzero}{0} \leq a_0$ and~$\at{\kzero + B}{0} \leq a_0$, where~$a_0 \geq \Lg{0} \dx$.
\end{lemma}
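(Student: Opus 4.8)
The plan is to control the suboptimality gap $\at{k}{0}$ by the product of the feasible set's diameter and the objective's Lipschitz constant, and then to observe that this crude bound is already dominated by $a_0$ by construction, so that no iteration-by-iteration analysis is needed at these early times.

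First I would recall from Section~\ref{ss:p2objectives} that $\g{\cdot}{t_0}$ is Lipschitz on $\X$ with constant $\Lg{0}$, so that $| \g{x_1}{t_0} - \g{x_2}{t_0} | \leq \Lg{0} \| x_1 - x_2 \|$ for all $x_1, x_2 \in \X$. Applying this to the definition of $\alpha$ in~\eqref{eq:1003} with $x_1 = \x{}{}{k}$ and $x_2 = \x{*}{k}{t_0}$ gives $\at{k}{0} \leq \Lg{0} \| \x{}{}{k} - \x{*}{k}{t_0} \|$ for every $k$. Here $\x{*}{k}{t_0}$ is a nearest point of $\X^*(t_0)$ to $\x{}{}{k}$, and the one-sided inequality holds whether or not $\x{*}{k}{t_0}$ is a global minimizer, which matters because $\g{\cdot}{t_0}$ may be nonconvex by Theorem~\ref{thm:agg}.

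Next I would argue that both arguments of the norm lie in $\X$, so their separation is at most $\dx$. The true state $\x{}{}{k}$ remains in $\X = \X_1 \times \dots \times \X_N$ because each block $\x{i}{i}{k}$ is produced by the projection $\Pi_{\X_i}[\,\cdot\,]$ in~\eqref{eq:update} and the initialization is feasible, while $\x{*}{k}{t_0} \in \X^*(t_0) \subseteq \X$ since every element of $\X^*(t_0)$ is a fixed point of $\Pi_\X[\,\cdot\,]$. Hence $\| \x{}{}{k} - \x{*}{k}{t_0} \| \leq \dx$, and combining with the previous bound yields $\at{k}{0} \leq \Lg{0} \dx$ for all $k$, in particular at $k = \kzero$ and $k = \kzero + B$.

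Finally I would invoke the definition of $a_0$ in~\eqref{eq:3037}, whose defining maximum includes the term $\Lg{0}\dx$, so that $a_0 \geq \Lg{0}\dx$. Chaining the inequalities gives $\at{\kzero}{0} \leq \Lg{0}\dx \leq a_0$ and $\at{\kzero + B}{0} \leq \Lg{0}\dx \leq a_0$, which is the claim. I do not expect a genuine obstacle here; the only step requiring care is verifying that the iterate and the nearest stationary point both reside in $\X$ so that $\dx$ controls their distance, and this follows at once from the projection structure of Algorithm~\ref{alg:myAlg} and the definition of $\X^*(t_0)$.
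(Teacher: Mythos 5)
Your proposal is correct and follows essentially the same route as the paper's own proof: apply the Lipschitz continuity of~$\g{\cdot}{\ind{}{0}}$ from Section~\ref{ss:p2objectives} to the definition of~$\alpha$ in~\eqref{eq:1003}, bound the distance between the true iterate and the nearest stationary point by~$\dx$, and conclude via~$a_0 \geq \Lg{0}\dx$ from~\eqref{eq:3037}. Your explicit verification that both points lie in~$\X$ (via the projection structure of Algorithm~\ref{alg:myAlg} and the fixed-point definition of~$\X^*(t_0)$) is a detail the paper leaves implicit, and the paper's additional appeal to Lemma~\ref{lem:kHat} concerns only the well-definedness of the comparison cost, not the validity of the bound itself.
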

\noindent \emph{Proof.}
By definition of~$\at{\kzero}{0}$ in~\eqref{eq:1003} we have
\begin{equation} \label{eq:21}
    \at{\kzero}{0} = \g{\x{}{}{\kzero}}{\ind{}{0}} - \g{\x{*}{\kzero}{\ind{}{0}}}{\ind{}{0}}, 
\end{equation}
where~$g(\x{*}{\kzero}{\ind{}{0}}; t_0)$ is constant for~$k \in \{\kzero, \dots, \eta_1\}$
by Lemma~\ref{lem:kHat}. 
Using this fact and the Lipschitz continuity of~$\g{\cdot}{\ind{}{0}}$ from Section~\ref{ss:p2objectives} we have~$\at{\kzero}{0} \leq \Lg{0} \| \x{}{}{\kzero} - \x{*}{\kzero}{\ind{}{0}} \|$. Then~$\at{\kzero}{0} \leq \Lg{0} \dx$, which follows from~$\| \x{}{}{\kzero} - \x{*}{\kzero}{\ind{}{0}} \| \leq \dx$.
The same argument bounds~$\at{\kzero + B}{0}$ by replacing~$\at{\kzero}{0}$ with~$\at{\kzero + B}{0}$ in~\eqref{eq:21}.
Thus, using 
$a_0 \geq \Lg{0} \dx$ gives the result. \hfill $\blacksquare$
\begin{lemma}\label{lem:betaBound}
For all~$\tz \in \Ts$ and $k \geq 0$, we have~$\bt{k}\leq B \dx^{2}$.
\end{lemma}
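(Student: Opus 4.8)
The plan is to exploit the fact that $\s{}{\tau}{\ell}$ is exactly the block-wise increment of the true network state, so that each summand in~\eqref{eq:1032} is the squared length of a chord of the compact constraint set~$\X$. First I would unpack the definition~\eqref{eq:1005} of $\s{i}{\tau}{\ell}$ and compare it against the update law~\eqref{eq:update}. When $\tau \in \K^i$, the $i$-th block records $\x{i}{i}{\tau+1} - \x{i}{i}{\tau}$, and when $\tau \notin \K^i$ this increment is $0$; both cases match the definition of $\s{i}{\tau}{\ell}$. Stacking the blocks and comparing with the true state~\eqref{eq:1001} then shows $\s{}{\tau}{\ell} = \x{}{}{\tau+1} - \x{}{}{\tau}$.

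Next I would invoke Assumption~\ref{ass:setConstraint}: since each block of $\x{}{}{\tau}$ and $\x{}{}{\tau+1}$ lies in its $\X_i$ (as either an initial value or a Euclidean projection onto $\X_i$), both vectors belong to $\X = \X_1 \times \cdots \times \X_N$. The definition of the diameter $\dx = \sup_{x,y \in \X}\|x-y\|$ then gives $\|\s{}{\tau}{\ell}\| = \|\x{}{}{\tau+1} - \x{}{}{\tau}\| \leq \dx$, and hence $\|\s{}{\tau}{\ell}\|^2 \leq \dx^2$. Finally, the sum in~\eqref{eq:1032} ranges over the $B$ indices $\tau \in \{k-B,\dots,k-1\}$, so bounding each of the (at most) $B$ summands by $\dx^2$ yields $\bt{k} \leq B \dx^2$.

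There is no substantive obstacle here; the only thing to get right is the identification $\s{}{\tau}{\ell} = \x{}{}{\tau+1}-\x{}{}{\tau}$, after which the bound is immediate from compactness of~$\X$ and a term count. One minor bookkeeping point is that when $k < B$ some indices $\tau$ are negative, but those terms are either absent or zero and do not affect the upper bound, since there remain at most $B$ nonzero contributions, each bounded by $\dx^2$.
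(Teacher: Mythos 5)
Your proposal is correct and follows essentially the same route as the paper's own proof: identify each summand $\s{}{\tau}{\ell}$ with the true-state increment $\x{}{}{\tau+1}-\x{}{}{\tau}$, note that both points lie in the compact product set $\X$ so each squared term is at most $\dx^2$, and sum over the $B$ indices. Your extra remark about negative indices when $k<B$ is a nice touch the paper handles only implicitly (by setting $\s{}{\tau}{\ell}=0$ for $\tau<0$ elsewhere), but it does not change the substance.
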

\noindent \emph{Proof.}
By definition of $\bt{k}$ in~\eqref{eq:1032} and~$\s{}{\tau}{\ell}$ in~\eqref{eq:1005} we have~$\bt{k} = \sum_{\tau=k-B}^{k-1} \sum_{i=1}^{N} \|x^i_i(\tau+1) - x^i_i(\tau)\|^2$.
Then ${\bt{k} \leq  \sum_{\tau=k-B}^{k-1} \sup_{v \in \X, w \in \X} \|v - w\|^2}$. 
Using $\sup_{v \in \X, w \in \X} \|v - w\| \leq \dx$ we find that~$\bt{k} \leq B \dx^2$.
\hfill $\blacksquare$

\begin{lemma} \label{lem:time0converge}
For~$\ind{}{0} \in \Ts$ and~$\gamma_{\max,0} \in (0,1)$ from~\eqref{eq:gammaMax},
for any~$\gamma_0 \in \big( 0,\gamma_{\max,0} \big)$ and any~$r_0 \in \N_0$, we have 
\begin{align}
    \at{ \kzero + r_0 B}{0} & \leq a_0 \pt{r_0-1}{0} \label{eq:3000}\\
    \bt{ \kzero + r_0 B} & \leq b_0 \pt{r_0-1}{0} \label{eq:3001},
\end{align}
where
~$D_0,E_0,F_0,G_0, a_0,b_0,c_0>0$ are defined in Section~\ref{ss:phases}.
\end{lemma}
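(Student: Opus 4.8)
The plan is to establish \eqref{eq:3000} and \eqref{eq:3001} together by induction on $r_0$, tracking the suboptimality gap $\at{\cdot}{0}$ and the recent step energy $\bt{\cdot}$ in tandem. Two ingredients drive the argument. The first is the length-$B$ sufficient-decrease inequality that underlies the guaranteed decrease of $\g{\x{}{}{k}}{t_0}$ once $k \ge \kzero$ (the result that introduces $D_0$ and $E_0$): it bounds $\at{k+B}{0}$ above by $\at{k}{0}$, minus a positive multiple of the current step energy $\bt{k+B}$, plus a multiple of the stale energy $\bt{k}$ coming from delayed updates. The second is the error bound of Lemma~\ref{lem:moreBound}, which controls $\at{k+B}{0}$ by $\gamma_0^{-2}F_0\bt{k+B}+\gamma_0^{-1}G_0\bt{k}$. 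Throughout I would use $\gamma_0\in(0,\gamma_{\max,0})$ from \eqref{eq:gammaMax}: the entry $\tfrac{2}{\Lx(1+B+2NB)}$ keeps $D_0>0$ and forces $\gamma_0 E_0<D_0$, while $\tfrac{1}{2c_0}$ guarantees $\rho_0=1-\gamma_0 c_0\in(0,1)$ with $c_0=D_0/(2F_0+2D_0)$.

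For the base cases I would dispatch $r_0\in\{0,1\}$ directly, which is precisely why Lemma~\ref{lem:alphaBound} records both $\at{\kzero}{0}$ and $\at{\kzero+B}{0}$. Since $\rho_0\in(0,1)$ gives $\pt{-1}{0}=\rho_0^{-1}\ge 1$, the bound $\at{\kzero}{0}\le a_0$ yields $\at{\kzero}{0}\le a_0\pt{-1}{0}$, and $\at{\kzero+B}{0}\le a_0=a_0\pt{0}{0}$; these are \eqref{eq:3000} at $r_0=0$ and $r_0=1$. For \eqref{eq:3001}, Lemma~\ref{lem:betaBound} gives $\bt{k}\le B\dx^2$ for every $k$, and the definitions of $a_0,b_0$ in Section~\ref{ss:phases} give $a_0\ge 8E_0\big(\tfrac{G_0}{F_0}+\tfrac{E_0}{D_0}\big)F_0 B\dx^2 D_0^{-1}$, hence $b_0\ge B\dx^2$; thus $\bt{k}\le B\dx^2\le b_0\le b_0\pt{-1}{0}$, covering $r_0\in\{0,1\}$.

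For the inductive step, assume \eqref{eq:3000} and \eqref{eq:3001} for all indices up to $r$ (with $r\ge 1$), and prove them at $r+1$ by applying both inequalities at $k=\kzero+rB$, so that $k+B=\kzero+(r+1)B$. To propagate the $\alpha$-bound, I would rearrange Lemma~\ref{lem:moreBound} into the lower bound $\bt{k+B}\ge\tfrac{\gamma_0^2}{F_0}\at{k+B}{0}-\tfrac{\gamma_0}{F_0}G_0\bt{k}$ and substitute it into the negative $\bt{k+B}$ term of the sufficient-decrease inequality; collecting the $\at{k+B}{0}$ terms is exactly the step that yields a block contraction $\at{k+B}{0}\le\rho_0\at{k}{0}+C\bt{k}$ whose factor collapses to $\rho_0=1-\gamma_0 c_0$. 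To propagate the $\beta$-bound, I would instead move the positive $\bt{k+B}$ term of the sufficient-decrease inequality to the left, bounding $\bt{k+B}$ above by $\at{k}{0}$ and $\bt{k}$. Feeding the hypotheses $\at{\kzero+rB}{0}\le a_0\pt{r-1}{0}$ and $\bt{\kzero+rB}\le b_0\pt{r-1}{0}$ into both, and absorbing the leftover $\bt{k}$ contributions using the calibration $b_0=\tfrac{D_0}{8E_0(\frac{G_0}{F_0}+\frac{E_0}{D_0})F_0}a_0$ together with the step-size bounds, yields $\at{\kzero+(r+1)B}{0}\le a_0\pt{r}{0}$ and $\bt{\kzero+(r+1)B}\le b_0\pt{r}{0}$, closing the induction.

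The main obstacle I anticipate is the constant bookkeeping in the inductive step: the decrease and error-bound inequalities must be combined so that the coefficients left in front of $\bt{k}$ and $\bt{k+B}$ stay nonnegative and so that the block decay factor collapses to exactly $\rho_0$ rather than to a looser $1-O(\gamma_0)$ quantity. This is where the remaining entries of $\gamma_{\max,0}$ — in particular $\big(\tfrac{G_0}{F_0}+\tfrac{E_0}{D_0}\big)^{-1}$, $\tfrac{D_0}{E_0}$, and $\tfrac{D_0}{8F_0(\frac{G_0}{F_0}+\frac{E_0}{D_0})c_0}$ — get consumed, and where the proportionality $b_0=\tfrac{D_0}{8E_0(\frac{G_0}{F_0}+\frac{E_0}{D_0})F_0}a_0$ must be checked to propagate the $\alpha$- and $\beta$-bounds at the common rate $\rho_0$ in a self-consistent way.
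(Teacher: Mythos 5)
Your overall skeleton---a tandem induction on $r_0$ with base cases $r_0\in\{0,1\}$ supplied by Lemmas~\ref{lem:alphaBound} and~\ref{lem:betaBound}, the combination of the block decrease inequality (Lemma~\ref{lem:phase2}) with Lemma~\ref{lem:moreBound}, and the $\beta$-propagation obtained by isolating $\bt{k+B}$ in the decrease inequality---is the paper's, and your base cases and $\beta$-step are sound. The gap is in your inductive step for the $\alpha$-bound. Combining the two inequalities the way you describe gives exactly the paper's intermediate bound~\eqref{eq:3004}, i.e., $\big(1+\tfrac{\gamma_0 D_0}{F_0}\big)\at{k+B}{0} \leq \at{k}{0} + \big(\tfrac{D_0 G_0}{F_0}+E_0\big)\bt{k}$, so your contraction is $\at{k+B}{0}\le \rho_0\,\at{k}{0}+C\,\bt{k}$ with $C=\big(\tfrac{D_0G_0}{F_0}+E_0\big)\big/\big(1+\tfrac{\gamma_0 D_0}{F_0}\big)$. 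Your claim about the multiplicative factor collapsing to $\rho_0$ is correct, but $C$ is \emph{not} $O(\gamma_0)$: it stays bounded away from zero as $\gamma_0\to 0$. Feeding in the hypotheses $\at{\kzero+rB}{0}\le a_0\pt{r-1}{0}$ and $\bt{\kzero+rB}\le b_0\pt{r-1}{0}$, closing the induction requires $\big(a_0+\big(\tfrac{D_0G_0}{F_0}+E_0\big)b_0\big)\big/\big(1+\tfrac{\gamma_0D_0}{F_0}\big)\le a_0\rho_0$. With the fixed calibration $b_0/a_0=D_0\big/\big(8E_0\big(\tfrac{G_0}{F_0}+\tfrac{E_0}{D_0}\big)F_0\big)$ this becomes $1+\tfrac{D_0^2}{8E_0F_0}\le(1-\gamma_0c_0)\big(1+\tfrac{\gamma_0D_0}{F_0}\big)$, which forces $\gamma_0>\tfrac{D_0}{8E_0}$, a \emph{lower} bound on the stepsize bounded away from zero. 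Since the lemma quantifies over all $\gamma_0\in(0,\gamma_{\max,0})$, in particular arbitrarily small $\gamma_0$ (for which $D_0$ is near $1$), the absorption you propose is impossible: an $O(1)$ coefficient on $\bt{k}$ cannot be paid for by an $O(\gamma_0)$ contraction margin.

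The paper escapes exactly this trap with one extra move your plan omits: before invoking any inductive hypothesis, it bounds $\bt{k}$ not by $b_0\pt{r-1}{0}$ but by the \emph{reversed} decrease inequality~\eqref{eq:3005}, namely $\bt{k}\le\tfrac{\gamma_0}{D_0}\big(\at{k-B}{0}-\at{k}{0}+E_0\bt{k-B}\big)$, whose crucial feature is the $\gamma_0$ prefactor. Substituting this into~\eqref{eq:3004} yields~\eqref{eq:3006}, in which every term other than $\big(1-\gamma_0\big(\tfrac{G_0}{F_0}+\tfrac{E_0}{D_0}\big)\big)\at{k}{0}$ carries an explicit factor of $\gamma_0$; the inductive hypothesis is then applied at the \emph{two} preceding block indices (using $\pt{-1}{0}\le 1+2c_0\gamma_0$), and the leftovers, now all $O(\gamma_0)$ or $O(\gamma_0^2)$, are absorbed by the entries $\gamma_0\le\tfrac{1}{2c_0}$ and $\gamma_0\le D_0\big/\big(8F_0\big(\tfrac{G_0}{F_0}+\tfrac{E_0}{D_0}\big)c_0\big)$ of~\eqref{eq:gammaMax}, giving the factor $\tfrac{2F_0+\gamma_0D_0}{2F_0+2\gamma_0D_0}\le\rho_0$. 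This two-step history is precisely why the base case must cover both $r_0=0$ and $r_0=1$---a structure you correctly set up but then never use, since your inductive step consumes only the single previous index.
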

\noindent \emph{Proof.}
For convenience we define~$\s{}{\tau}{0}=0$ for~$\tau<0$.
From Lemma~\ref{lem:phase2} and Lemma~\ref{lem:moreBound} for any~$k$ such that~$ \kzero \leq k \leq \eta_0 - B$,
\begin{align}
    \at{k+B}{0}  \leq & \at{k}{0} \! -  \! \gamma_0^{-1} D_0 \bt{k+B} \! + \! E_0 \bt{k} \label{eq:3002} \\
    \at{k+B}{0} \leq & \gamma_0^{-2}F_0 \bt{k+B} + \gamma_0^{-1} G_0 \bt{k}. \label{eq:3003}
\end{align}
Rearranging~\eqref{eq:3003} gives~$\bt{k+B} \geq \frac{\at{k+B}{0}  - \gamma_0^{-1} G_0 \bt{k}}{\gamma_0^{-2}F_0}$.
Substituting this into~\eqref{eq:3002} and rearranging yields
\begin{equation}
    \bigg[ 1 + \frac{\gamma_0 D_0}{F_0} \bigg] \at{k+B}{0} \leq \at{k}{0} + \bigg[  \frac{D_0 G_0}{F_0}
    + E_0 \bigg] \bt{k}. \label{eq:3004}
\end{equation}
Fix any~$k \in \{\kzero + B, \dots, \eta_0 - B\}$, substitute~$k-B$ for~$k$ in~\eqref{eq:3002} to find
\begin{equation}
    \bt{k} \leq \gamma_0 \frac{ \at{k-B}{0} -\at{k}{0} + E_0 \bt{k-B}}{D_0}. \label{eq:3005}
\end{equation}
Applying ~\eqref{eq:3005} to the right-hand side of~\eqref{eq:3004}, we find 
\begin{multline}
     \at{k+B}{0} \! \leq \! \bigg[ 1 + \frac{\gamma_0 D_0}{F_0} \bigg]^{-1}  \Bigg[ \bigg[1 - \gamma_0 \bigg[ \frac{G_0}{F_0} 
     + \frac{E_0}{D_0} \bigg]\bigg] \at{k}{0} \\
    + \gamma_0 \bigg[ \frac{G_0}{F_0}
    + \frac{E_0}{D_0} \bigg] \Big[ \at{k-B}{0} 
    + E_0 \bt{k-B} \Big] \Bigg] . \label{eq:3006}
\end{multline}
For any integer~$d \geq 2$, we iterate~\eqref{eq:3002} to find
\begin{equation}
    \at{k+dB}{0}  \leq \at{k}{0} - \big( \gamma_0^{-1} D_0 - E_0 \big) \sum^{d-1}_{j=1} \bt{k+jB} 
    - \gamma_0^{-1} D_0 \bt{k+dB} + E_0 \bt{k}.
\end{equation}
Using~$\gamma_0 < D_0/E_0$ and~$\bt{\tau} \geq 0$ for all~$\tau>0$ gives 
\begin{equation}
{\at{k+dB}{0} \leq \at{k}{0} - \big( \gamma_0^{-1} D_0 - E_0 \big) \bt{k+B} + E_0 \bt{k}}.
\end{equation}
Since~$\at{k+dB}{0} \geq 0$,  we then have the bounds 
\begin{equation}
{0 \leq \at{k}{0} - \big( \gamma_0^{-1} D_0 - E_0 \big) \bt{k+B} + E_0 \bt{k}}
\end{equation}
and 
\begin{equation}
   \bt{k+B} \leq \frac{\gamma_0}{ D_0 - \gamma_0 E_0} \big( \at{k}{0} + E_0 \bt{k} \big). \label{eq:3007}
\end{equation}

Next, we will use~\eqref{eq:3006} and~\eqref{eq:3007} to show the linear convergence of Algorithm~\ref{alg:myAlg} for~$t_0$. 
Select~$a_0 \geq \Lg{0} \dx$ and~${b_0 \geq B \dx^{2}}$ according to Lemma~\ref{lem:alphaBound} and Lemma~\ref{lem:betaBound}. Then~$\at{\kzero}{0} \leq a_0$,~$\at{\kzero + B}{0} \leq a_0$,~$\bt{ \kzero} \leq b_0$, and~$\bt{\kzero + B} \leq b_0$.
Then~\eqref{eq:3000} and~\eqref{eq:3001} hold for~$r_0=0,1$. Next, we will use induction to show that~\eqref{eq:3000} and~\eqref{eq:3001} hold for all~$r_0 \in \N_0$. 
For the inductive hypothesis suppose~\eqref{eq:3000} and~\eqref{eq:3001} hold for all~$r_0$ up to~$d \geq 1$. 
From~$\gamma_0 < 1 \big/ \big( \frac{G_0}{F_0} + \frac{E_0}{D_0} \big)$ and~\eqref{eq:3006} we find 
\begin{multline} \label{eq:29}
     \at{\kzero+dB+B}{0} \leq \frac{1}{1 + \gamma_0 \frac{D_0}{F_0}}  \Bigg( \bigg(1 - \gamma_0 \bigg( \frac{G_0}{F_0} 
     + \frac{E_0}{D_0} \bigg)\bigg) \at{\kzero+dB}{0} \\
    + \gamma_0 \bigg( \frac{G_0}{F_0} 
    + \frac{E_0}{D_0} \bigg) \Big( \at{\kzero+dB-B}{0} 
    + E_0 \bt{\kzero+dB-B} \Big) \Bigg).
\end{multline}
For~$\gamma_0 \leq \frac{1}{2 c_0}$ we have~$\pt{-1}{0} \leq 1 + 2c_0\gamma_0$. Using this bound and the inductive hypothesis, we find
\begin{multline}
     \at{\kzero+dB+B}{0} \leq \frac{1}{1 + \gamma_0 \frac{D_0}{F_0}}  \Bigg( 1 + 2 \gamma_0^2 c_0 \bigg( \frac{G_0}{F_0} + \frac{E_0}{D_0} \bigg) \\
     + \gamma_0 \bigg( \frac{G_0}{F_0} + \frac{E_0}{D_0} \bigg) \big( 1 + 2c_0 \gamma_0 \big) E_0 \frac{b_0}{a_0} \Bigg) a_0 \pt{d-1}{0}. 
\end{multline}
Then, using~$\frac{b_0}{a_0} = \frac{D_0}{8 E_0 \big( \frac{G_0}{F_0} + \frac{E_0}{D_0} \big) F_0}$,~$\gamma_0 \leq \frac{1}{2 c_0}$, and~$\gamma_0 \leq \frac{D_0}{8 F_0 \Big( \frac{G_0}{F_0} + \frac{E_0}{D_0} \Big) c_0}$ and simplifying gives
the upper bound
     $\at{\kzero+dB+B}{0}  \leq  
      \frac{2 F_0 + \gamma_0 D_0}{2 F_0 + 2 \gamma_0 D_0}  a_0 \pt{d-1}{0}$.
Using~$1 - \gamma_0 \frac{ D_0}{2F_0 + 2 \gamma_0 D_0} = \frac{2F_0 + \gamma_0 D_0}{2F_0 + 2 \gamma_0 D_0}$ and~$\gamma_0 < 1$, we reach
\begin{equation}
     \at{\kzero+dB+B}{0} \leq a_0 \pt{d}{0}, \label{eq:3009}
\end{equation}
and this completes the induction on~\eqref{eq:3000}.

To complete the inductive argument we make a similar argument for~\eqref{eq:3001}. From~\eqref{eq:3007} and the inductive hypothesis we have the bound~$\bt{\kzero +dB+B} \leq \frac{\gamma_0 \big( \frac{a_0}{b_0} + E_0 \big)}{ D_0 - \gamma_0 E_0} b_0 \pt{d-1}{0}$.
Here~$\gamma_0 < \gamma_{\max,0}$ ensures~$\gamma_0 \Big( \frac{a_0}{b_0} + E_0 \Big) \leq \big(  D_0 - \gamma_0 E_0 \big) \big( 1 - \gamma_0 c_0 \big)$, and we obtain~$\bt{\kzero +dB+B} \leq \big( 1 - \gamma_0 c_0 \big) b_0 \pt{d-1}{0}$.
The definition of~$\pt{}{0}$ gives~$\bt{\kzero +dB+B} \leq b_0 \pt{d}{0}$. 
%
%
\hfill $\blacksquare$


\subsection{Lemmas in Support of Proving Lemma~\ref{lem:phase3}} \label{app:towardlemma1proof}
Toward proving Lemma~\ref{lem:phase3}, we have several preliminary lemmas. 

\begin{lemma} \label{lem:phase1}
    For all~$\tz \in \Ts$, we have the upper bound
    \begin{equation}
        \g{\x{}{}{\eta_{z-1}+B}}{\tz} \leq   \g{\x{}{}{\eta_{z-1}}}{\tz} +
        B \dx  M_{x,z}
        + \frac{\Lx B^2 \dx^2}{2}.
    \end{equation}
\end{lemma}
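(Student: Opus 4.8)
The plan is to bound the growth of $\g{\cdot}{\tz}$ across the entire window of length $B$ in a single application of the descent lemma, rather than telescoping step by step. First I would invoke the regularity established in Section~\ref{ss:p2objectives}: for each $\tz \in \Ts$ the gradient $\nabla_x \g{\cdot}{\tz}$ is $\Lx$-Lipschitz on $\X$, and the gradient bound $\| \nabla_x \g{x}{\tz} \| \leq M_{x,z}$ holds for every $x \in \X$. These are precisely the hypotheses needed to apply Lemma~\ref{lem:descent2} to $h = \g{\cdot}{\tz}$ with $K = \Lx$.

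Next I would set $x = \x{}{}{\eta_{z-1}}$ and $y = \x{}{}{\eta_{z-1}+B} - \x{}{}{\eta_{z-1}}$, so that $x + y = \x{}{}{\eta_{z-1}+B}$, and apply Lemma~\ref{lem:descent2} to obtain
\begin{equation}
\g{\x{}{}{\eta_{z-1}+B}}{\tz} \leq \g{\x{}{}{\eta_{z-1}}}{\tz} + y^T \nabla_x \g{\x{}{}{\eta_{z-1}}}{\tz} + \frac{\Lx}{2}\|y\|^2.
\end{equation}
The first-order term is then controlled by the Cauchy--Schwarz inequality together with the gradient bound, giving $y^T \nabla_x \g{\x{}{}{\eta_{z-1}}}{\tz} \leq \|y\| M_{x,z}$, so everything reduces to bounding $\|y\|$ by $B\dx$.

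For the displacement I would write $y$ as a telescoping sum of the single-step increments, namely $y = \sum_{k=\eta_{z-1}}^{\eta_{z-1}+B-1}\big(\x{}{}{k+1}-\x{}{}{k}\big) = \sum_{k=\eta_{z-1}}^{\eta_{z-1}+B-1}\s{}{k}{\ell}$, using the definition of $\s{}{k}{\ell}$ in~\eqref{eq:1005}. Because every iterate lies in $\X$, each increment obeys $\|\x{}{}{k+1}-\x{}{}{k}\| \leq \dx$, and the triangle inequality over the $B$ terms gives $\|y\| \leq B\dx$. Substituting this into the first- and second-order terms produces $B\dx M_{x,z}$ and $\frac{\Lx B^2 \dx^2}{2}$ respectively, which is exactly the claimed bound.

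I expect no genuine obstacle here; the one thing to get right is the deliberate choice to bound the whole displacement $\|y\|$ by $B\dx$ rather than by the diameter $\dx$ directly. This coarser estimate is what yields the $B^2$ dependence in the second-order term, and it is the form that the statement of Lemma~\ref{lem:phase1}, and the later lemmas that consume it, are written against.
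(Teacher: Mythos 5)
Your proposal is correct and matches the paper's own proof in all essentials: a single application of Lemma~\ref{lem:descent2} over the window of length $B$, with the displacement written as the sum of the per-step increments $\s{}{\tau}{\ell}$, each bounded by $\dx$, yielding the terms $B \dx M_{x,z}$ and $\frac{\Lx B^2 \dx^2}{2}$. The only cosmetic difference is that the paper bounds the quadratic term via $\bigl\| \sum_\tau \s{}{\tau}{\ell} \bigr\|^2 \leq B \sum_\tau \| \s{}{\tau}{\ell} \|^2 \leq B^2 \dx^2$ while you bound $\|y\| \leq B\dx$ first and then square, which gives the identical constant.
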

 \begin{proof}
For all~$\tz \in \Ts$, we have
    ~$\g{\x{}{}{\eta_{z-1}+ B} }{\tz} = \g{\x{}{}{\eta_{z-1}} + \sum^{\eta_{z-1}+B-1}_{\tau = \eta_{z-1}} \s{}{\tau}{\tau}}{\tz} $.
From Lemma~\ref{lem:descent2} we have
\begin{equation}
    \g{\x{}{}{\eta_{z-1}+B}}{\tz} \leq \g{\x{}{}{\eta_{z-1}}}{\tz}  
    + \left( \sum^{\eta_{z-1}+B-1}_{\tau = \eta_{z-1}} \!\!\!\! \s{}{\tau}{\tau} \! \right)^T \!\!\! \nabla_x \g{\x{}{}{\eta_{z-1}}}{\tz} 
    + \frac{\Lx}{2} \left\| \sum^{\eta_{z-1}+B-1}_{\tau = \eta_{z-1}} \!\!\!\!\s{}{\tau}{\tau} \right\|^2\!\!.
\end{equation}
For the first sum, we apply the Cauchy-Schwarz inequality and
the triangle inequality, 
and for the second sum we 
use the fact that~$ \big\Vert \sum^N_{i=1} z \big\Vert^2 \leq N \sum^N_{i=1} \Vert z \Vert^2$ to find
\begin{multline}
    \g{\x{}{}{\eta_{z-1}+B}}{\tz} \leq  \frac{\Lx B}{2} \sum^{\eta_{z-1}+B-1}_{\tau = \eta_{z-1}} \| \s{}{\tau}{\tau} \|^2 \\
    + \g{\x{}{}{\eta_{z-1}}}{\tz}  + \| \nabla_x \g{\x{}{}{\eta_{z-1}}}{\tz} \|  \sum^{\eta_{z-1}+B-1}_{\tau = \eta_{z-1}} \| \s{}{\tau}{\tau} \| .
\end{multline}
The result follows from 
Lemma~\ref{lem:betaBound}, 
the boundedness of~$\nabla_x \g{\cdot}{\tz}$ from Section~\ref{ss:p2objectives}, 
and the fact that~$\Vert \s{}{\cdot}{z+1} \Vert \leq \dx$.
\end{proof}


\begin{lemma} \label{lem:phase2}
    For all~$\tz \in \Ts$ and~$k \in \{ \eta_{z-1}+B, \dots, \eta_z - B \}$, 
    \begin{equation} \label{eq:l5result}
        \g{\x{}{}{k+B}}{\tz} - \g{\x{}{}{k}}{\tz}  \leq  \frac{\Lx N B}{2}  \bt{k} 
        -  \frac{ 2 - \gammaz \Lx ( 1 + B + N B ) }{2 \gammaz}  \bt{k+B}.
    \end{equation}
\end{lemma}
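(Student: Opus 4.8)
The plan is to prove the $B$-step descent by telescoping over single steps and applying a smoothness descent inequality to each, which keeps every staleness comparison at a single time and thereby avoids any cross-time drift term. Note first that the restriction $k \in \{\eta_{z-1}+B,\dots,\eta_z-B\}$ guarantees that the whole window $[k-B,\,k+B-1]$ lies inside $\{\eta_{z-1},\dots,\eta_z-1\}$, so every update and gradient below refers to the single fixed objective $\g{\cdot}{\tz}$. I would write $\g{\x{}{}{k+B}}{\tz} - \g{\x{}{}{k}}{\tz} = \sum_{\tau=k}^{k+B-1}\big(\g{\x{}{}{\tau+1}}{\tz} - \g{\x{}{}{\tau}}{\tz}\big)$ and use $\x{}{}{\tau+1} - \x{}{}{\tau} = \s{}{\tau}{\ell}$ from~\eqref{eq:1005} and~\eqref{eq:1001}. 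Applying Lemma~\ref{lem:descent2} to $\g{\cdot}{\tz}$ (whose gradient is $\Lx$-Lipschitz by Section~\ref{ss:p2objectives}) at base point $\x{}{}{\tau}$ gives, for each $\tau$, the bound $\g{\x{}{}{\tau+1}}{\tz} - \g{\x{}{}{\tau}}{\tz} \leq \s{}{\tau}{\ell}^T \nabla_x\g{\x{}{}{\tau}}{\tz} + \tfrac{\Lx}{2}\|\s{}{\tau}{\ell}\|^2$.

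The second-order terms sum directly to $\tfrac{\Lx}{2}\sum_{\tau=k}^{k+B-1}\|\s{}{\tau}{\ell}\|^2 = \tfrac{\Lx}{2}\bt{k+B}$, which supplies the ``$1$'' in the coefficient $1+B+NB$. For the first-order terms I would expand blockwise, $\s{}{\tau}{\ell}^T\nabla_x\g{\x{}{}{\tau}}{\tz} = \sum_{i=1}^N \s{i}{\tau}{\ell}^T\nabla_{\x{}{i}{}}\g{\x{}{}{\tau}}{\tz}$, then add and subtract the gradient evaluated at agent $i$'s local copy $\x{i}{}{\tau}$ to split each summand into $\s{i}{\tau}{\ell}^T\nabla_{\x{}{i}{}}\g{\x{i}{}{\tau}}{\tz}$ plus an error $\s{i}{\tau}{\ell}^T\big(\nabla_{\x{}{i}{}}\g{\x{}{}{\tau}}{\tz} - \nabla_{\x{}{i}{}}\g{\x{i}{}{\tau}}{\tz}\big)$. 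The first piece is exactly the quantity bounded by Lemma~\ref{lem:descent}, so summing over $i$ and $\tau$ yields $-\tfrac{1}{\gammaz}\sum_{\tau=k}^{k+B-1}\|\s{}{\tau}{\ell}\|^2 = -\tfrac{1}{\gammaz}\bt{k+B}$, the descent term.

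The crux is the error piece. Using Cauchy--Schwarz, the $\Lx$-Lipschitz property of $\nabla_x\g{\cdot}{\tz}$, and Lemma~\ref{lem:outOfDate} applied at time $\tau$ to bound the staleness $\|\x{i}{}{\tau} - \x{}{}{\tau}\|$ by $\sum_{\zeta=\tau-B}^{\tau-1}\|\s{}{\zeta}{\ell}\|$ at that same time (this is precisely why telescoping, rather than one application of the descent lemma across the whole window, is used), the total error is at most $\Lx\sum_{\tau=k}^{k+B-1}\sum_{i=1}^N\|\s{i}{\tau}{\ell}\|\sum_{\zeta=\tau-B}^{\tau-1}\|\s{}{\zeta}{\ell}\|$. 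I would then apply Young's inequality $\|\s{i}{\tau}{\ell}\|\,\|\s{}{\zeta}{\ell}\| \leq \tfrac12\|\s{i}{\tau}{\ell}\|^2 + \tfrac12\|\s{}{\zeta}{\ell}\|^2$ and sum over $i$ via $\sum_i\|\s{i}{\tau}{\ell}\|^2 = \|\s{}{\tau}{\ell}\|^2$. Counting occurrences gives the two contributions: each $\|\s{}{\tau}{\ell}\|^2$ appears for the $B$ values of $\zeta$, producing $\tfrac{\Lx B}{2}\bt{k+B}$; and each $\|\s{}{\zeta}{\ell}\|^2$, with $\zeta$ sweeping $[k-B,\,k+B-2]$, appears at most $B$ times with weight $\tfrac{N}{2}$, producing $\tfrac{\Lx NB}{2}\big(\bt{k}+\bt{k+B}\big)$ after splitting the index set at $k-1$.

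Collecting the second-order ($\tfrac{\Lx}{2}\bt{k+B}$), descent ($-\tfrac{1}{\gammaz}\bt{k+B}$), and error ($\tfrac{\Lx(B+NB)}{2}\bt{k+B} + \tfrac{\Lx NB}{2}\bt{k}$) contributions reproduces exactly $\tfrac{\Lx NB}{2}\bt{k} - \tfrac{2-\gammaz\Lx(1+B+NB)}{2\gammaz}\bt{k+B}$. I expect the main obstacle to be the index bookkeeping in the error step: one must verify that the staleness windows $[\tau-B,\tau-1]$ sweep out $[k-B,k+B-2]$ and that each index is hit at most $B$ times, so the $\bt{k}$ and $\bt{k+B}$ coefficients emerge exactly as stated. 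A secondary care point is that summands with $\tau\notin\K^i$ vanish identically since $\s{i}{\tau}{\ell}=0$, so Lemma~\ref{lem:descent} holds trivially there.
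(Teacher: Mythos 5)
Your proposal is correct and follows essentially the same route as the paper's proof: a single-step bound via the descent lemma (Lemma~\ref{lem:descent2}), splitting the gradient term into the local piece handled by Lemma~\ref{lem:descent} and a staleness error handled by Cauchy--Schwarz, Lipschitz continuity, Lemma~\ref{lem:outOfDate}, and Young's inequality, then summing over the $B$ steps from $k$ to $k+B-1$. The only difference is cosmetic---you defer the index-counting to the end of the telescoped sum, whereas the paper counts per step and then sums---and your bookkeeping of the coefficients $\frac{\Lx NB}{2}$ on $\bt{k}$ and $\frac{\Lx(1+B+NB)}{2}-\frac{1}{\gammaz}$ on $\bt{k+B}$ is exact.
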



 \begin{proof}
  We begin by bounding the difference between~$\g{\x{}{}{k}}{\tz}$ and~$\g{\x{}{}{k+1}}{\tz}$. From the definition of~$\x{}{}{k+1}$ we have
    $\g{\x{}{}{k+1}}{\tz} = \g{\x{}{}{k} + \s{}{k}{\ell}}{\tz}$.
From Lemma~\ref{lem:descent2}, adding~$\nabla_{\x{}{i}{}} \g{\x{i}{}{k}}{\tz} - \nabla_{\x{}{i}{}} \g{\x{i}{}{k}}{\tz}$
we find
\begin{multline}
    \g{\x{}{}{k+1}}{\tz}  \leq \g{\x{}{}{k}}{\tz} 
    + \sum^N_{i=1}\s{i}{k}{\ell}^T  \nabla_{\x{}{i}{}} \g{\x{i}{}{k}}{\tz} \\
    + \sum^N_{i=1}\s{i}{k}{\ell}^T \! \big( \nabla_{\x{}{i}{}} \g{\x{}{}{k}}{\tz} - \nabla_{\x{}{i}{}} \g{\x{i}{}{k}}{\tz}\!\big) 
    + \frac{\Lx}{2} \|\s{}{k}{\ell} \|^2\!.
\end{multline}
Applying the Cauchy-Schwarz inequality, the Lipschitz property of~$\nabla_{\x{}{}{}} \g{\cdot}{\tz}$, Lemma~\ref{lem:outOfDate}, and ${\big\Vert \s{i}{k}{\ell} \big\Vert \cdot \big\Vert \s{}{\tau}{\ell} \big\Vert  \leq  \frac{1}{2} \big( \big\Vert \s{i}{k}{\ell} \big\Vert^2 + \| \s{}{\tau}{\ell} \|^2}\big)$ leads to
\begin{multline}
    \g{\x{}{}{k+1}}{\tz}  \leq \g{\x{}{}{k}}{\tz} 
    + \sum^N_{i=1}\s{i}{k}{\ell}^T  \nabla_{\x{}{i}{}} \g{\x{i}{}{k}}{\tz} \\
    + \frac{\Lx}{2} \Biggl( \! B \sum^N_{i=1} \| \s{i}{k}{\ell} \|^2
    + N \! \!\!\!\sum^{k-1}_{\tau = k - B} \| \s{}{\tau}{\ell} \|^2 \! \Biggr) 
    + \frac{\Lx}{2} \|\s{}{k}{\ell}\|^2.
\end{multline}
Using~$\sum^N_{i=1} \big\Vert \s{i}{k}{\ell} \big\Vert^2 = \|\s{}{k}{\ell}\|^2$ and Lemma~\ref{lem:descent} gives 
\begin{equation} 
    \g{\x{}{}{k+1}}{\tz} - \g{\x{}{}{k}}{\tz} \leq  \frac{N \Lx}{2}  \sum^{k-1}_{\tau = k - B} \| \s{}{\tau}{\ell} \|^2 
    + \Bigg( -\frac{1}{\gammaz} + \frac{\Lx}{2} + \frac{B \Lx}{2} \Bigg) \| \s{}{k}{\ell} \|^2. \label{eq:61}
\end{equation}
Applying this successively to~$k, k+1, \dots, k+B - 1$ and summing the resulting inequalities gives us the result. 
    
\end{proof}

\begin{lemma} \label{lem:designGamma}
    We have~$\g{\x{}{}{k+B}}{\tz} - \g{\x{}{}{k}}{\tz}  < 0$ 
    for all~$\tz \in \Ts$, times~$k \in \{ \eta_{z-1}+B, \dots, \eta_z - B \}$, and stepsizes $\gammaz \in \left(0, \frac{2}{\Lx (1 + B  + 2  N B)} \right)$. 
\end{lemma}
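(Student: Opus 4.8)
The plan is to read the conclusion directly off the per-block estimate already proved in Lemma~\ref{lem:phase2} and then pin down why the stated stepsize range forces its right-hand side to be negative. Lemma~\ref{lem:phase2} gives, for every $k\in\{\eta_{z-1}+B,\dots,\eta_z-B\}$, the bound
\begin{equation}
    \g{\x{}{}{k+B}}{\tz}-\g{\x{}{}{k}}{\tz}\leq \frac{\Lx NB}{2}\,\bt{k}-\frac{2-\gammaz\Lx(1+B+NB)}{2\gammaz}\,\bt{k+B},
\end{equation}
so the two relevant quantities are the ``damage'' coefficient $\tfrac{\Lx NB}{2}$ multiplying the stale update energy $\bt{k}$ and the ``progress'' coefficient $C(\gammaz):=\tfrac{2-\gammaz\Lx(1+B+NB)}{2\gammaz}$ multiplying the current update energy $\bt{k+B}$. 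I would first record that $C(\gammaz)>0$ exactly when $\gammaz<\tfrac{2}{\Lx(1+B+NB)}$, so for the stepsizes in the statement the $\bt{k+B}$ term genuinely pushes the cost down.

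The algebraic heart of the argument is that the sharper bound $\gammaz<\tfrac{2}{\Lx(1+B+2NB)}$ in the statement is \emph{equivalent} to $C(\gammaz)>\tfrac{\Lx NB}{2}$: clearing the positive denominator in $\tfrac{2-\gammaz\Lx(1+B+NB)}{2\gammaz}>\tfrac{\Lx NB}{2}$ returns precisely $\gammaz\Lx(1+B+2NB)<2$. Thus the progress coefficient strictly dominates the damage coefficient, and once the two windows of update energy are comparable I could write $\tfrac{\Lx NB}{2}\bt{k}-C(\gammaz)\bt{k+B}\leq \big(\tfrac{\Lx NB}{2}-C(\gammaz)\big)\bt{k+B}<0$, giving the strict decrease.

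The main obstacle is exactly that comparison: it reduces the claim to showing $\bt{k}\leq\bt{k+B}$, i.e. that the aggregate update energy over the current window of length $B$ is at least that over the immediately preceding window, which under asynchrony is not an ordering one gets for free. To establish it I would return to the single-step descent inequality \eqref{eq:61}, from which the update magnitudes $\|\s{}{m}{\ell}\|^2$ are controlled through the Cauchy--Schwarz step together with Lemmas~\ref{lem:descent} and~\ref{lem:outOfDate} that bound the out-of-date iterates; the descent guarantee at each step, valid before the iterates have settled at a minimizer, is what keeps successive windows from collapsing faster than the damage term can absorb. I expect this window comparison to be the delicate part, since Lemma~\ref{lem:phase2} in isolation yields only a non-positive per-block bound, and converting it to the strict inequality is precisely where the calibrated stepsize $\gammaz<\tfrac{2}{\Lx(1+B+2NB)}$ must be spent. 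If a clean pointwise comparison proves too strong, the fallback I would pursue is to telescope \eqref{eq:61} back to the phase start $\eta_{z-1}+B$ (where Lemma~\ref{lem:phase1} bounds the initial cost), amortizing each stale update against its own descent contribution one block earlier so that every update carries a net negative weight across the interval.
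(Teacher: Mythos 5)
Your first half is exactly the paper's argument for one of its two cases: starting from Lemma~\ref{lem:phase2}, you show that the stated stepsize range $\gammaz < \frac{2}{\Lx(1+B+2NB)}$ is equivalent to the progress coefficient $\frac{2-\gammaz\Lx(1+B+NB)}{2\gammaz}$ strictly exceeding the damage coefficient $\frac{\Lx NB}{2}$; the paper performs the identical computation after dividing through by $\bt{k+B}$. The genuine gap is what follows. You reduce the lemma to proving $\bt{k} \leq \bt{k+B}$ for every $k$ in the window, and you propose to derive this ordering from the single-step inequality \eqref{eq:61} together with Lemmas~\ref{lem:descent} and~\ref{lem:outOfDate}. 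That plan cannot succeed, because the ordering is false in general: update magnitudes of a descent method shrink as iterates approach a stationary point, so the \emph{typical} regime is $\bt{k+B} \leq \bt{k}$, the reverse of what you need. Concretely, if at time $k$ every agent's iterate is a fixed point of its projected-gradient map, then $\s{}{\tau}{\ell} = 0$ for all $\tau \geq k$, so $\bt{k+B} = 0$ while $\bt{k}$ can be strictly positive; no combination of the cited lemmas can establish an inequality that fails on such runs. Your fallback (amortizing each stale update against an earlier descent contribution) is the right instinct but is left as a sketch, so the proposal as written does not close.

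The paper's proof instead splits on precisely the ordering you tried to force: it treats $\bt{k+B} \geq \bt{k}$ (your case, with your algebra) and $\bt{k+B} \leq \bt{k}$ as separate cases. Be aware that the case you are missing is the delicate one even in the paper: there, the argument multiplies the target inequality \eqref{eq:89} by the positive factor $\frac{2\gammaz}{2-\gammaz\Lx(1+B+NB)}$ and then verifies that $\frac{\Lx NB\gammaz}{2-\gammaz\Lx(1+B+NB)} \leq 1$, which is a \emph{necessary} consequence of the target rather than a sufficient condition for it; with $\bt{k+B}=0<\bt{k}$ the target fails even though that ratio is at most one, and in that situation $\x{}{}{k+B}=\x{}{}{k}$, so the lemma's strict inequality itself degenerates to equality. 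So your identification of the window comparison as the crux was correct, but the resolution you chose (proving the comparison always holds) is the one direction that is certainly unavailable; a complete argument must handle the regime $\bt{k+B} \leq \bt{k}$ head on, e.g., via the Lyapunov-style amortization you only gesture at.
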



\begin{proof}
Define $q_1 \coloneqq \frac{2 \gammaz}{ 2 - \gammaz \Lx ( 1 + B  +  N B ) } $.
By Lemma~\ref{lem:phase2} for all~$\tz \in \Ts$ and~$k \in \{ \eta_{z-1}+B, \dots, \eta_{z} - B\}$ we have
\begin{equation}
    \g{\x{}{}{k\!+\!B}}{\tz} - \g{\x{}{}{k}}{\tz} \! \leq \! \frac{\Lx N B}{2}  \bt{k} 
                -  \frac{ 1 }{q_1}  \bt{k \!+\! B}. \label{eq:66}
\end{equation}
Then we would like to design~$\gammaz$ such that
\begin{equation}
    \frac{ 1 }{q_1}  \bt{k+B}
    \geq 
    \frac{\Lx N B}{2} \bt{k}. \label{eq:89}
\end{equation}
First, suppose~$\bt{k+B} \geq \bt{k}$. Then it suffices to show 
     $\frac{ 1 }{q_1}  \bt{k+B}
    \geq 
    \frac{\Lx N B}{2} \bt{k+B}$.
Dividing by~$\bt{k+B}$ yields
    $\frac{ 1 }{ \gammaz} - \frac{ \Lx (1 + B  + N B ) }{2}
    \geq \frac{\Lx N B}{2}$.
Solving for~$\gammaz$ yields
    $\gammaz \leq \frac{ 2 }{ \Lx (1 + B  + 2 N B)}$.
Now, suppose~$\bt{k+B} \leq \bt{k}$. 
Multiplying~\eqref{eq:89} by~$q_1$ gives 
    $\bt{k+B} \geq \frac{ \Lx N B \gammaz}{ 2 -  \gammaz \Lx ( 1 + B  +  N B ) } \bt{k}$.
Since~$\bt{k+B} \leq \bt{k}$ in this case, it must be the case that
    $\frac{ \Lx N B \gammaz}{ 2 -  \gammaz \Lx ( 1 + B + N B ) }
    \leq 1$,
which holds for~$\gammaz$ from above. 
\end{proof}
    
\begin{lemma} \label{lem:kHat}
    For~$\tz \in \Ts$, let the stepsize be
       $\gammaz \in \left(0, \min \left\{ 1, \frac{2}{ \Lx ( 1+ B + 2 N B) } \right\}\right)$. 
       Then there exists a finite time~$\hat{k}_z$ such
    $g(\x{*}{k}{\tz}) = g(\x{*}{k+1}{\tz})$
   for~$k \in \{ \eta_{z-1} + \kz, \dots, \eta_{z}  \}$. 
\end{lemma}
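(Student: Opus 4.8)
The plan is to fix the objective $\g{\cdot}{\tz}$ and argue that, along the iterates generated while minimizing it, the true state $\x{}{}{k}$ is driven arbitrarily close to the stationary set $\X^*(\tz)$; then the spatial separation of distinct-cost stationary points (Lemma~\ref{lem:solSeparation}) forces the cost of the nearest solution $\x{*}{k}{\tz}$ to stop changing after finitely many ticks, and that tick count is $\hat{k}_z$.

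First I would show that the steps vanish, i.e. $\|\s{}{k}{\ell}\| \to 0$. Under the stated stepsize bound, Lemma~\ref{lem:designGamma} (via Lemma~\ref{lem:phase2}) makes $\g{\x{}{}{k}}{\tz}$ non-increasing over blocks of length $B$, while Assumption~\ref{ass:lowerBound} bounds it below by $0$. Summing the per-block decrease of Lemma~\ref{lem:phase2} over blocks telescopes the cost against this lower bound and leaves a finite total of the $\bt{k}$ terms, which forces $\bt{k}\to 0$ and hence $\|\s{}{\tau}{\ell}\|\to 0$.

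Next I would convert vanishing steps into a vanishing optimality residual at the true state. Since each block updates at least once within any window of length $B$ (Assumption~\ref{ass:partialAsynch}) and the local copies converge to $\x{}{}{k}$ as the steps vanish (Lemma~\ref{lem:outOfDate}), the gradient-mapping residual $\|\x{}{}{k} - \Pi_\X[\x{}{}{k} - \gammaz \nabla_x \g{\x{}{}{k}}{\tz}]\|$ tends to zero, and Lemma~\ref{lem:noGammaUpdate} gives the same for the unit-stepsize residual. Because $\g{\cdot}{\tz}\le M_{g,z}$ on $\X$, once this residual drops below $\delta_z$ the Error Bound Condition (Lemma~\ref{lem:errorBound1}) yields $\min_{x^*\in\X^*(\tz)}\|\x{}{}{k} - x^*\| = \|\x{}{}{k} - \x{*}{k}{\tz}\| \to 0$.

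Finally I would lock the cost. Choose $\hat{k}_z$ so that for all $k \ge \eta_{z-1}+\hat{k}_z$ both $\|\x{}{}{k} - \x{*}{k}{\tz}\|$ and $\|\s{}{k}{\ell}\|$ are below $\epsilon_z/3$, which the two limits above permit. Using $\|\x{}{}{k+1}-\x{}{}{k}\| = \|\s{}{k}{\ell}\|$, the triangle inequality gives $\|\x{*}{k}{\tz} - \x{*}{k+1}{\tz}\| \le \|\x{*}{k}{\tz} - \x{}{}{k}\| + \|\s{}{k}{\ell}\| + \|\x{}{}{k+1} - \x{*}{k+1}{\tz}\| < \epsilon_z$, so Lemma~\ref{lem:solSeparation} forces $\g{\x{*}{k}{\tz}}{\tz} = \g{\x{*}{k+1}{\tz}}{\tz}$; the same bound makes all nearest points lie within $\epsilon_z$ of each other, removing the argmin ambiguity flagged in the footnote. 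The hard part will be the residual transfer in the third step: rigorously passing from the vanishing of the asynchronous block steps $\s{}{\tau}{\ell}$ to a vanishing gradient-mapping residual evaluated at the true state $\x{}{}{k}$, since the updates use delayed local copies rather than $\x{}{}{k}$ itself, so Lemma~\ref{lem:outOfDate} and partial asynchrony must be combined carefully before the Error Bound Condition can be invoked.
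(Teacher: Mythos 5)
Your proposal is correct and follows essentially the same route as the paper's own proof: vanishing steps obtained by telescoping the per-block decrease of Lemma~\ref{lem:phase2} against boundedness of the cost, then the Error Bound Condition (Lemmas~\ref{lem:errorBound1} and~\ref{lem:noGammaUpdate}) combined with a triangle inequality, and finally Lemma~\ref{lem:solSeparation} to conclude that the cost of the nearest stationary point stops changing after a finite time $\hat{k}_z$. The ``hard part'' you flag --- rigorously passing from vanishing asynchronous block steps $\s{}{k}{\ell}$ to a vanishing gradient-mapping residual at the true state $\x{}{}{k}$ --- is in fact elided in the paper's proof as well, which directly substitutes $\|\s{}{k}{\ell}\|$ for that residual, so your treatment is if anything more explicit about the delayed-information issue.
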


\begin{proof}
Set~$\omega_z = g\big(x(\eta_{z-1} + B), t_z\big)$. 
By Lemma~\ref{lem:errorBound1} there exist~$\delta_z$ and~$\lambda_z$ such that,
for~$k \in \{\eta_{z-1} + B, \ldots, \eta_z\}$,
if~$x(k)$ satisfies
\begin{equation} \label{eq:residual_condition}
\|x(k) - \Pi_{\mathcal{X}}[x(k) - \nabla_xg\big(x(k); t_z\big)]\| \leq \delta_z,
\end{equation}
then
\begin{equation}
\min_{x^* \in \mathcal{X}^*(t_z)} \|x(k) - x^*\| \leq \lambda_z \|x(k) - \Pi_{\mathcal{X}}[x(k) - \nabla_xg\big(x(k); t_z\big)]\|.
\end{equation}
For~$x(k)$ satisfying~\eqref{eq:residual_condition}, applying Lemma~\ref{lem:noGammaUpdate} then gives
\begin{equation} \label{eq:l7main}
\min_{x^* \in \mathcal{X}^*(t_z)} \|x(k) - x^*\| \leq \lambda_z\min\{1, \gammaz^{-1}\} \|x(k) - \Pi_{\mathcal{X}}[x(k) - \gammaz\nabla_xg\big(x(k); t_z\big)]\|. 
\end{equation}

Next, using the triangle inequality we see that
for any~$\x{*}{k}{\tz} \in \argmin_{x^* \in \X} \|x(k) - x^*\|$
and any~$\x{*}{k+1}{\tz} \in \argmin_{x^* \in \X} \|x(k+1) - x^*\|$ we have 
\begin{multline} \label{eq:l7tri}
\|x^*_k(t_z) - x^*_{k+1}(t_z)\| \leq \|x^*_k(t_z) - x(k)\| + \|x(k) - x^*_{k+1}(t_z)\| \\
\leq 
\|x^*_k(t_z) - x(k)\| + \|x(k) - x(k + 1)\| + \|x(k + 1) - x^*_{k+1}(t_z)\|. 
\end{multline}
Applying~\eqref{eq:l7main} to~\eqref{eq:l7tri} and using the definition
of~$s(k)$ we find
\begin{align} \label{eq:l7callback}
\|x^*_k(t_z) - x^*_{k+1}(t_z)\| &\leq \lambda_z\min\{1, \gammaz^{-1}\}(\|s(k)\| + \|s(k+1)\|) + \|s(k)\| \\
&\leq (1 + \lambda_z\min\{1, \gammaz^{-1}\})(\|s(k)\| + \|s(k+1)\|). 
\end{align}

Next, summing~\eqref{eq:l5result} 
across~$k, k+B, \ldots, k+dB$ gives
\begin{equation}
g\big(x(k + dB); t_z) - g\big(x(k); t_z\big) \leq \frac{L_zNB}{2}\beta(k) 
- \frac{2 - \gammaz L_z(1 + B + 2NB)}{2\gammaz} \sum_{\ell=1}^{d} \beta(k + \ell B)
\end{equation}
for~$d \in \N$ such that~$k + dB \leq \eta_z$. Rearranging gives
\begin{equation} \label{eq:l7almost}
\sum_{\ell=1}^{d} \beta(k + \ell B) \leq \frac{\gammaz L_zNB}{2 - \gammaz L_z(1 + B + 2NB)}Bd_{\mathcal{X}}^2  
+ \frac{2\gammaz}{2 - \gammaz L_z(1 + B + 2NB)}g_{max,z},
\end{equation}
where we have used Lemma~\ref{lem:betaBound} 
and defined~$g_{max,z} = \max_{x \in \mathcal{X}} g(x; t_z)$. 
The right-hand side of~\eqref{eq:l7almost} is constant, and therefore
we must have~$s(k) \to 0$ as~$d$ grows on the left-hand side. In particular,
there exists some time~$\hat{k}_z$ after which
\begin{equation} \label{eq:l7skbound}
\|s(k)\| < \min\left\{\frac{\epsilon_z}{2(1 + \lambda_z\min\{1,\gammaz^{-1}\})}, \gammaz\delta_z\right\}.
\end{equation}
Then for~$k \geq \hat{k}_z$, we see that~\eqref{eq:residual_condition} holds by Lemma~\ref{lem:noGammaUpdate}, and therefore~\eqref{eq:l7callback} holds
as well. Substituting~\eqref{eq:l7skbound} into~\eqref{eq:l7callback} gives
$\|x^*_k(t_z) - x^*_{k+1}(t_z)\| < \epsilon_z$. 
By Lemma~\ref{lem:solSeparation}, distinct elements of~$\mathcal{X}^*(t_z)$
are separated by at least~$\epsilon_z$ and hence~$g(x^*_k(t_z)) = g(x^*_{k+1}(t_z))$
for~$k \geq \hat{k}_z$. 
\end{proof}

\subsection{Proof of Lemma~\ref{lem:phase3}} \label{lem:linearConverge}
We first show that for any~$\gamma_1 \in \left( 0, \gamma_{\max,1} \right)$ and~$r_1 \in \N_0$ 
\begin{align}
    \at{\eta_{0} + \kone + r_1 B}{1} & \leq a_1 \pt{r_1-1}{1} \label{eq:3011} \\
    \bt{\eta_{0} + \kone +r_1 B} & \leq b_1 \pt{r_1-1}{1} \label{eq:3012}.
\end{align}
Following the same steps to reach~\eqref{eq:3006} we find
\begin{multline}
     \at{k+B}{1} \! \leq \! \bigg[ 1 + \frac{\gamma_1 D_1}{F_1} \bigg]^{-1}  \Bigg[ \bigg[ 1 - \gamma_1 \bigg[ \frac{G_1}{F_1} + \frac{E_1}{D_1} \bigg]\bigg] \at{k}{1} \\
    + \gamma_1 \bigg[ \frac{G_1}{F_1}
    + \frac{E_1}{D_1} \bigg] \Big[ \at{k-B}{1} 
    + E_1 \bt{k-B} \Big] \Bigg], \label{eq:3017}
\end{multline}
and following the same steps to reach~\eqref{eq:3007}, we find
\begin{align}
   \bt{k+B} &\leq \frac{\gamma_1}{ D_1 - \gamma_1 E_1} \big( \at{k}{1} + E_1 \bt{k} \big). \label{eq:3018}
\end{align}
We will use~\eqref{eq:3017} and~\eqref{eq:3018} to 
show that~\eqref{eq:3011} and~\eqref{eq:3012} hold for any~$r_1 \in \N_0$.
First, we show that~\eqref{eq:3011} and~\eqref{eq:3012} hold for~$r_1 = 0,1$. Specifically, we select~$a_1 > 0$ and~$b_1 > 0$ such that
\begin{align}
    \begin{array}{cc}
         \at{\eta_0 + \kone }{1} \leq a_1, & \at{\eta_0 + \kone + B}{1} \leq a_1,  \label{eq:3019}
    \end{array} \\
    \begin{array}{cc}
    \bt{\eta_0 + \kone} \leq b_1,  & \bt{\eta_0 + \kone + B} \leq b_1 . \label{eq:3020}
    \end{array}
\end{align}
By definition of~$\at{\eta_0 + \kone}{1}$ we have
\begin{equation} \label{eq:30}
    \at{\eta_0 + \kone}{1} = \g{\x{}{}{\eta_0 + \kone}}{\ind{}{1}} - \g{\x{*}{\kone}{\ind{}{1}}}{\ind{}{1}}. 
\end{equation}
By Lemma~\ref{lem:designGamma} we have~$\g{\x{}{}{\eta_0 + \kone }}{\ind{}{1}} < \g{\x{}{}{\eta_0 + B}}{\ind{}{1}}$, and thus
\begin{equation}
    \at{\eta_0 + \kone}{1} \leq \g{\x{}{}{\eta_0 + B}}{\ind{}{1}} - \g{\x{*}{\kone}{\ind{}{1}}}{\ind{}{1}}.
\end{equation}
Using Lemma~\ref{lem:phase1} and Lemma~\ref{lem:jump} we obtain
\begin{multline}
    \at{\eta_0 + \kone}{1} \leq \g{\x{}{}{\eta_0}}{t_0} - \g{\x{*}{\kone}{\ind{}{1}}}{t_0} + L_t \Delta 
    + B \dx  M_{1} + \frac{L_1 B^2 \dx^2}{2} \\
    + \g{\x{*}{\kone}{\ind{}{1}}}{t_0} - \g{\x{*}{\eta_0}{t_0}}{t_1} 
    + \g{\x{*}{\eta_0}{t_0}}{t_1} - \g{\x{*}{\kone}{\ind{}{1}}}{\ind{}{1}}.
\end{multline}
Taking the absolute value gives 
\begin{multline}
    \at{\eta_0 + \kone}{1} \leq  L_t \Delta + B \dx  M_{1} + \frac{L_1 B^2 \dx^2}{2} 
    + | \g{\x{}{}{\eta_0}}{t_0} - \g{\x{*}{\eta_0}{t_0}}{t_0} | \\
    + | \g{\x{*}{\eta_0}{t_0}}{t_0} 
    - \g{\x{*}{\eta_0}{t_0}}{t_1} |
    + | \g{\x{*}{\eta_0}{t_0}}{t_1} - \g{\x{*}{\kone}{\ind{}{1}}}{\ind{}{1}} |.
\end{multline}
Rearranging Lemma~\ref{lem:jump} and taking the absolute value of both sides gives us~$| \g{x}{t_z} - \g{x}{t_{z+1}} | \leq L_t \Delta$ for all~$x\in \X$ and~$\tz \in \Ts$. Applying this bound, the Lipschitz continuity of~$\g{\cdot}{\ind{}{1}}$ from
Section~\ref{ss:p2objectives}, and Assumption~\ref{ass:sigma} we have
\begin{equation}
    \at{\eta_0 + \kone}{1} \leq  2 L_t \Delta + B \dx  M_{1} + \frac{L_1 B^2 \dx^2}{2} + \Lg{1} \sigma_1 
    + | \g{\x{}{}{\eta_0}}{t_0} - \g{\x{*}{\eta_0}{t_0}}{t_0} | .
\end{equation}
Using the definition of~$\at{\eta_0}{0}$ and the fact that~$\at{\eta_0}{0} \geq 0$, we have
    $\at{\eta_0 + \kone}{1} \leq  2 L_t \Delta + B \dx  M_{1} + \frac{L_1 B^2 \dx^2}{2} + \Lg{1} \sigma_1 
    + \at{\eta_0}{0}$.
We have~$\eta_0 = \kzero + r_0 B$, and therefore we can apply the result of Lemma~\ref{lem:time0converge} to obtain
\begin{equation}
    \at{\eta_0 + \kone}{1} \leq a_0 \pt{r_0 -1}{0} +  2 L_t \Delta + B \dx  M_{1}  + \Lg{1} \sigma_1 
    + \frac{L_1 B^2 \dx^2}{2} . \label{eq:3032}
\end{equation}
Next, we modify the right-hand side of~\eqref{eq:3032} to design~$\frac{b_1}{a_1}$. 
We observe that~$8E_1\left(\frac{G_1}{F_1} + \frac{E_1}{D_1}\right)F_1 \geq 1$.
Indeed, we have the bound~$8E_1\left(\frac{G_1}{F_1} + \frac{E_1}{D_1}\right)F_1 = 8E_1G_1 + 8\frac{E_1^2F_1}{D_1} \geq 1$,
which follows by inspection of~$8E_1G_1$.
For all~$\gamma_1 \in \left( 0 ,  \frac{2}{ L_1 (1 + B + BN)} \right)$ we have~$D_1 \leq 1$. Then we have~$\frac{D_1}{8E_1\left(\frac{G_1}{F_1} + \frac{E_1}{D_1}\right)F_1} \leq 1$ as well.
Multiplying $\frac{8E_1\left(\frac{G_1}{F_1} + \frac{E_1}{D_1}\right)F_1}{D_1} \geq 1$ with the last term in~\eqref{eq:3032} gives
\begin{equation}
    \at{\eta_0 + \kone}{1} 
    \leq  2 L_t \Delta + B \dx  M_{1}  + \Lg{1} \sigma_1
    + a_0 \pt{r_0 -1}{0} 
    + 4 L_1 B^2 \dx^2 E_1\left(\frac{G_1}{F_1} + \frac{E_1}{D_1}\right)F_1 D_1^{-1} . 
\end{equation}
Lemma~\ref{lem:designGamma} gives~$\g{\x{}{}{\eta_0 + \kone + B}}{\ind{}{1}} \leq \g{\x{}{}{\eta_0 + \kone}}{\ind{}{1}}$ and thus~$\at{\eta_0 + \kone + B}{1} \leq \at{\eta_0 + \kone}{1}$.
Thus, for
\begin{equation} \label{eq:31}
    a_1 = a_0 \pt{r_0 -1}{0} + 2 L_t \Delta + B \dx  M_{1}  + \Lg{1} \sigma_1 
    +  4 L_1 B^2 \dx^2 E_1\left(\frac{G_1}{F_1} + \frac{E_1}{D_1}\right)F_1 D_1^{-1}
\end{equation}
we satisfy ~\eqref{eq:3019}. Furthermore if we select~$b_1 = B \dx^2$ according to Lemma~\ref{lem:betaBound},
then we satisfy ~\eqref{eq:3020}.
From these selections of~$a_1$ and~$b_1$ we have~$\frac{b_1}{a_1} \leq \frac{D_1}{  8E_1\left(\frac{G_1}{F_1} + \frac{E_1}{D_1}\right)F_1}$.
Then these choices of~$a_1,b_1$ satisfy~\eqref{eq:3011} and~\eqref{eq:3012} for~$r_1 = 0,1$.

Next we prove that ~\eqref{eq:3011} and~\eqref{eq:3012} hold for all~$r_1 \in \N_0$ by induction. 
For the inductive hypothesis suppose~\eqref{eq:3011} and~\eqref{eq:3012} hold for all~$r_1$ up to~$d \geq 1$.
Using the same steps from~\eqref{eq:29} to~\eqref{eq:3009}, we have~$\at{\eta_0+\kone+dB+B}{1} \leq a_1 \pt{d}{1}$, and this completes induction on~\eqref{eq:3011}.
To complete the inductive argument for~$\beta$ we make a similar argument to reach~\eqref{eq:3012}. 
From~\eqref{eq:3018} we have
\begin{equation} \label{eq:t2_beta1}
   \bt{\eta_0 + \kone+dB\!+\!B} \leq \frac{\gamma_1}{ D_1 - \gamma_1 E_1} \big( \at{\eta_0 + \kone + dB}{1} 
   + E_1 \bt{\eta_0 +\kone+dB} \big). 
\end{equation}
Next, we have~$\bt{\eta_0 +\kone+dB+B} \leq \frac{\gamma_1 \big( \frac{a_1}{b_1} + E_1 \big)}{ D_1 - \gamma_1 E_1} b_1 \pt{d-1}{1}$ by the inductive hypothesis.
We then design~$\gamma_1 < \frac{ \frac{a_1}{b_1} + 2E_1 + D_1 c_1 - \sqrt{\big( \frac{a_1}{b_1} + 2E_1 + D_1 c_1\big)^2 - 4 D_1 E_1 c_1}}{2 E_1 c_1}$ to ensure that~$\gamma_1 \Big( \frac{a_1}{b_1} + E_1 \Big) \leq \big(  D_1 - \gamma_1 E_1 \big) \big( 1 - \gamma_1 c_1 \big)$ so that we obtain~$\bt{\kone+dB+B} \leq \big( 1 - \gamma_1 c_1 \big) b_1 \pt{d-1}{1}$.
Therefore
\begin{equation} \label{eq:t2_beta2}
   \bt{\kone+dB+B} \leq b_1 \pt{d}{1}.
\end{equation}
Thus,~\eqref{eq:3012} holds for all~$r_1 \in \N_0$.

The preceding establishes the base case for the next inductive argument in which we show that if~$\at{\eta_{z-1} + \kz + \rz B}{z}  \leq \az \pt{\rz-1}{z}$ and~$\bt{\eta_{z-1} + \kz + \rz B} \leq \bz \pt{\rz-1}{z}$ hold for a fixed~$\tz$, then
\begin{align}
    \at{\eta_{z} + \kzz + r_{z+1} B}{z + 1} & \leq \azz \pt{\rzz-1}{z + 1} \label{eq:3023}\\
    \bt{\eta_{z} + \kzz + r_{z+1} B} & \leq \bzz \pt{\rzz-1}{z + 1} \label{eq:3024}
\end{align}
also hold. 
This will complete our proof of Lemma~\ref{lem:phase3}.
From Lemma~\ref{lem:phase2} and Lemma~\ref{lem:moreBound} for any~$k \in \{ \eta_z + \kz, \dots, \eta_{z+1} - B\}$,
\begin{align}
    \at{k+B}{z+1}  &\leq \at{k}{z+1} - \gammazz^{-1} \Dzz \bt{k+B} + \Ezz \bt{k}, \label{eq:3025} \\
    \at{k + B}{z+1} &\leq \gammazz^{-2}\Fzz \bt{k + B} + \gammazz^{-1} \Gzz \bt{k}. \ \ \ \label{eq:3026}
\end{align}
We rearrange~\eqref{eq:3026} 
to lower bound~$\bt{k+B}$, and using that bound in
\eqref{eq:3025} gives
\begin{equation}
    \bigg( 1 + \frac{\gammazz \Dzz}{\Fzz} \bigg) \at{k+B}{z+1} \leq \at{k}{z+1} 
    + \bigg(  \frac{\Dzz \Gzz}{\Fzz} 
    + \Ezz \bigg) \bt{k}. \label{eq:3027}
\end{equation}
Fix any~$k \in \{\eta_{z-1} + \kz + B,\dots,  \eta_z - B \}$ and 
replace~$k$ by~$k-B$ in~\eqref{eq:3025}. Then~$\bt{k} \leq  \frac{\gammazz}{\Dzz} (\at{k-B}{z} -\at{k}{z+1} + \Ezz \bt{k-B})$.
Applying this to~\eqref{eq:3027} gives
\begin{multline}
     \at{k+B}{z+1} \leq \bigg[ 1 + \frac{\gammazz \Dzz}{\Fzz} \bigg]^{-1}  \Bigg( \bigg[1 - \gammazz \bigg[ \frac{\Gzz}{\Fzz} 
     + \frac{\Ezz}{\Dzz} \bigg]\bigg] \at{k}{z+1} \\
    + \gammazz \bigg[ \frac{\Gzz}{\Fzz}
    + \frac{\Ezz}{\Dzz} \bigg] \cdot 
    \Big[ \at{k-B}{z} + \Ezz \bt{k-B} \Big] \Bigg). \label{eq:3029}
\end{multline}
For any integer~$d \geq 2$, we iterate~\eqref{eq:3025} to find
\begin{multline}
    \at{k+dB}{z+1} 
     \leq \at{k}{z+1} - \gammazz^{-1} \Dzz \bt{k+dB} \\
     + \Ezz \bt{k}
     - \big( \gammazz^{-1} \Dzz - \Ezz \big) \sum^{d-1}_{j=1} \bt{k+jB} 
     .
\end{multline}
Since~$\gammazz < \Dzz / \Ezz$ and~$\bt{\tau} \geq 0$ for all~$\tau>0$, 
\begin{equation}
    \at{k+dB}{z+1} \leq \at{k}{z+1} + \Ezz \bt{k} - \big( \gammazz^{-1} \Dzz - \Ezz \big) \bt{k+B}.
\end{equation}
Using~$\at{k+dB}{z} \geq 0$ we rearrange to find
\begin{equation}
   \bt{k+B} \! \leq \! \frac{\gammazz}{ \Dzz \! - \! \gammazz \Ezz} \! \big( \at{k}{z+1} \! + \! \Ezz \bt{k} \big). \label{eq:3030}
\end{equation}

We will use~\eqref{eq:3029} and~\eqref{eq:3030} to show that~\eqref{eq:3023} and~\eqref{eq:3024} hold for any~$\rzz \in \N_0$. 
First we show that~\eqref{eq:3025} and~\eqref{eq:3026} hold for~$\rzz = 0,1$. Specifically, we select~$\azz > 0$ and~$\bzz > 0$ such that
\begin{align}
      \alpha (\eta_z + \kzz;\tzz) \leq \azz, \,\,\,\,\,\, \at{\eta_z + \kzz + B}{z+1} \leq \azz \ \ \label{eq:3034} \\
    \beta (\eta_z + \kzz )\leq \bzz,  \ \ \  \bt{\eta_z + \kzz + B} \leq \bzz . \ \ \ \label{eq:3035}
\end{align}
We repeat the steps from~\eqref{eq:30} to~\eqref{eq:31} to find that if we select
\begin{multline}
    \azz = \az \pt{r_z -1}{z} + 2 L_t \Delta + B \dx  \Mxx  + \Lg{z+1} \sigma_{z+1} \\
    +  4 \Lxx B^2 \dx^2 \Ezz\left(\frac{\Gzz}{\Fzz} + \frac{\Ezz}{\Dzz}\right)\Fzz \Dzz^{-1}, 
\end{multline}
then~\eqref{eq:3034} holds. Selecting~$\bzz = B \dx^2$ satisfies~\eqref{eq:3035} due to Lemma~\ref{lem:betaBound}.
From these selections of~$\azz$ and~$\bzz$ we have the bound~$\frac{\bzz}{\azz} \leq   \frac{\Dzz}{  8\Ezz\left(\frac{\Gzz}{\Fzz} + \frac{\Ezz}{\Dzz}\right)\Fzz}$.
These selections of~$\azz,\bzz$ satisfy ~\eqref{eq:3023} and~\eqref{eq:3024} for~$\rz = 0,1$. 
Next we prove that ~\eqref{eq:3025} and~\eqref{eq:3026} hold for all~$\rzz \in \N_0$ by induction. For the inductive hypothesis suppose~\eqref{eq:3025} and~\eqref{eq:3026} hold for all~$\rzz$ up to~$d \geq 1$.
Using the same steps used from~\eqref{eq:29} to~\eqref{eq:3009}
we obtain~$\at{\eta_z+\kzz+dB+B}{z+1} \leq \azz \pt{d}{z+1}$, and this completes the induction on~\eqref{eq:3023}.

To complete the inductive argument for~$\beta$ we make a similar argument to reach~\eqref{eq:3024}.
Following the same steps used to go from~\eqref{eq:t2_beta1} to~\eqref{eq:t2_beta2}, 
we find~$\bt{\eta_z + \kzz+dB+B} \leq \bzz \pt{d}{z+1}$.
Therefore,~\eqref{eq:3024} holds for~$z+1$.
\hfill $\blacksquare$

\begin{lemma} \label{lem:jump}
    For all~$x \in \X$ and~$\tz \in \Ts$, we have the bound 
        $\g{x}{\tzz} \leq \g{x}{\tz} + L_t \Delta$,
    where we have defined  
        $L_t \coloneqq \frac{1}{2} \max_{x \in \X} \| x \|^2 \sum^N_{i=1} L_Q^{[i]} 
    + \max_{x \in \X} \| x \| \sum^N_{i=1} L_r^{[i]}$, and $\Delta>0$ is from Assumption~\ref{ass:sample}.
\end{lemma}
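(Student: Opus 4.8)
The plan is to expand the difference $\g{x}{\tzz} - \g{x}{\tz}$ using the definitions of $\Qmap{\cdot}$ and $\rmap{\cdot}$ from Problem~\ref{prob:aggregate} and then bound the resulting quadratic and linear pieces block by block. Writing
\begin{equation}
    \g{x}{\tzz} - \g{x}{\tz} = x^T \big( \Qmap{\tzz} - \Qmap{\tz} \big) x + \big( \rmap{\tzz} - \rmap{\tz} \big)^T x,
\end{equation}
the task reduces to controlling the two differences on the right-hand side. The key observation is that the $i$-th row block of $\Qmap{\tzz} - \Qmap{\tz}$ is exactly $\Q{i}{\theta_i(\tzz)} - \Q{i}{\theta_i(\tz)}$, and likewise the $i$-th block of $\rmap{\tzz} - \rmap{\tz}$ is $\rr{i}{\theta_i(\tzz)} - \rr{i}{\theta_i(\tz)}$, so each block involves a single agent's data sampled at two times.

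Next I would bound each block in time. Since $\theta_i$ is nondecreasing in its argument, Assumption~\ref{ass:sample} gives $0 \le \theta_i(\tzz) - \theta_i(\tz) \le \Delta$, and Assumption~\ref{ass:lipTime} then yields $\| \Q{i}{\theta_i(\tzz)} - \Q{i}{\theta_i(\tz)} \| \le \Lq{i}\Delta$ and $\| \rr{i}{\theta_i(\tzz)} - \rr{i}{\theta_i(\tz)} \| \le \Lr{i}\Delta$ for every $i \in [N]$. Using the block decomposition of Assumption~\ref{ass:setConstraint}, I would expand $x^T ( \Qmap{\tzz} - \Qmap{\tz} ) x = \sum_{i=1}^N (x^{[i]})^T ( \Q{i}{\theta_i(\tzz)} - \Q{i}{\theta_i(\tz)} ) x$ and $( \rmap{\tzz} - \rmap{\tz} )^T x = \sum_{i=1}^N ( \rr{i}{\theta_i(\tzz)} - \rr{i}{\theta_i(\tz)} )^T x^{[i]}$, apply Cauchy--Schwarz within each block together with $\| x^{[i]} \| \le \| x \|$, and sum. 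This produces a bound proportional to $\| x \|^2 \Delta \sum_{i=1}^N \Lq{i}$ for the quadratic part and to $\| x \| \Delta \sum_{i=1}^N \Lr{i}$ for the linear part. Taking the supremum over $x \in \X$, which is finite because $\X$ is compact by Assumption~\ref{ass:setConstraint}, and collecting both pieces yields $\g{x}{\tzz} - \g{x}{\tz} \le L_t \Delta$ for $L_t$ of the stated form, which is the claim (and in fact gives the two-sided estimate $| \g{x}{\tzz} - \g{x}{\tz} | \le L_t \Delta$ used later).

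The main obstacle is the bookkeeping forced by the block-stacked structure of $\Qmap{\cdot}$: because each row block is sampled at its own index $\theta_i(\cdot)$, the matrix $\Qmap{\tzz} - \Qmap{\tz}$ is in general not symmetric, and the quadratic form couples each $x^{[i]}$ with the full vector $x$ rather than decoupling across blocks. The estimate must therefore be assembled block-row by block-row rather than through a single spectral-norm inequality. Pinning down the precise leading constant in $L_t$ is the other delicate point; the natural device is to first pass to the symmetric part $\tfrac12 ( \Qmap{\tzz} - \Qmap{\tz} ) + \tfrac12 ( \Qmap{\tzz} - \Qmap{\tz} )^T$, which leaves the quadratic form $x^T ( \Qmap{\tzz} - \Qmap{\tz} ) x$ unchanged, before carrying out the blockwise bound.
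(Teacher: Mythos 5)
Your proposal takes essentially the same route as the paper's proof: expand $\g{x}{\tzz} - \g{x}{\tz}$ using the definitions of $\Qmap{\tz}$ and $\rmap{\tz}$ in Problem~\ref{prob:aggregate}, estimate the quadratic part block-row by block-row via Cauchy--Schwarz together with $\| x^{[i]} \| \leq \| x \| \leq \max_{x \in \X} \| x \|$, do the same for the linear part, and then apply Assumptions~\ref{ass:lipTime} and~\ref{ass:sample} to each block, with compactness of $\X$ (Assumption~\ref{ass:setConstraint}) making the suprema finite. In substance this is exactly the paper's argument.

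The one claim in your writeup that does not hold up is the final remark that passing to the symmetric part is what ``pins down the precise leading constant.'' Symmetrization cannot do this: writing $D = \Qmap{\tzz} - \Qmap{\tz}$, one has $x^T D^T x = x^T D x$, so $x^T D x = \tfrac{1}{2} x^T ( D + D^T ) x$ is an identity that changes neither the quadratic form nor the blockwise estimate. Your block-row bound therefore yields $| x^T D x | \leq \max_{x \in \X} \| x \|^2 \, \Delta \sum_{i=1}^N \Lq{i}$, which is \emph{twice} the first term of the stated $L_t$, and a scalar example ($n = N = 1$, $D = 1$, $x = 1$) shows the factor $\tfrac{1}{2}$ cannot be recovered under the definition $\g{x}{\tz} = x^T \Qmap{\tz} x + \rmap{\tz}^T x$ given in Problem~\ref{prob:aggregate}. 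The $\tfrac{1}{2}$ in the stated $L_t$ actually traces to the paper's own proof, which expands the difference as $\tfrac{1}{2} x^T ( \Qmap{\tzz} - \Qmap{\tz} ) x + ( \rmap{\tzz} - \rmap{\tz} )^T x$, i.e., it implicitly adopts the $\tfrac{1}{2} x^T Q x$ convention and is thereby inconsistent with the stated definition of $g$. So your argument correctly proves the lemma with the quadratic coefficient of $L_t$ doubled --- which is all the rest of the paper needs, since downstream results only use that some finite $L_t$ exists --- but you should drop the symmetrization device rather than rely on it to match the stated constant.
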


\begin{proof}
From Problem~\ref{prob:aggregate} we have
\begin{equation}
    \g{x}{\tzz} - \g{x}{\tz}  = \frac{1}{2} x^T (\Upsilon (Q(\tzz)) - \Upsilon (Q(\tz)) ) x 
    + (\Xi (r(\tzz)) - \Xi (r(\tz)))^T x.
\end{equation}
    Applying the Cauchy-Schwarz inequality and the fact that 
    $\| x^{[i]} \| \leq \| x \| \leq \max_{x \in \X} \| x \|$ for all~$i\in [N]$ gives 
\begin{multline}
    \g{x}{\tzz} - \g{x}{\tz} \leq 
    \frac{1}{2} \max_{x \in \X} \| x \|^2 \sum^N_{i=1} \| \Q{i}{\theta_i \left( \tzz \right)} - \Q{i}{\theta_i \left( \tz \right)} \| \\
    + \max_{x \in \X} \| x \| \sum^N_{i=1} \| \rr{i}{\theta_i \left( \tzz \right)} - \rr{i}{\theta_i \left( \tz \right)} \|.
\end{multline}
Applying Assumptions~\ref{ass:lipTime} and~\ref{ass:sample} yields the result.
\end{proof}




\bibliographystyle{IEEEtran}{}
\bibliography{references}

\end{document}